\newcommand{\ALC@uniqueautorefname}{Line}
\theoremstyle{plain}
\newtheorem{theorem}{Theorem}[section]
\newtheorem{proposition}[theorem]{Proposition}
\newtheorem{lemma}[theorem]{Lemma}
\newtheorem{corollary}[theorem]{Corollary}
\theoremstyle{definition}
\newtheorem{definition}[theorem]{Definition}
\theoremstyle{remark}
\newtheorem{remark}[theorem]{Remark}
\newtheorem{comment}[theorem]{Comment}
\begin{document}

\title{Parameterized Vietoris-Rips Filtrations via Covers
\thanks{BJN was supported by the Defense Advanced Research Projects Agency (DARPA) under Agreement No.
HR00112190040}
}


\author{Bradley J. Nelson}

\author{
Bradley J. Nelson\\
	Department of Statistics\\
	Committee on Computational and Applied Mathematics\\
	University of Chicago\\
	Chicago, IL 60637\\
	\texttt{bradnelson@uchicago.edu} \\
}



\date{Received: date / Accepted: date}

\maketitle

\begin{abstract}
A challenge in computational topology is to deal with large filtered geometric complexes built from point cloud data such as Vietoris-Rips filtrations. This has led to the development of schemes for parallel computation and compression which restrict simplices to lie in open sets in a cover of the data.  We extend the method of acyclic carriers to the setting of persistent homology to give detailed bounds on the relationship between Vietoris-Rips filtrations restricted to covers and the full construction.  We show how these complexes can be used to study data over a base space and use our results to guide the selection of covers of data.  We demonstrate these techniques on a variety of covers, and show the utility of this construction in investigating higher-order homology of a model of high-dimensional image patches.


\end{abstract}




\section{Introduction}\label{sec:introduction}

A common task in computational topology is to construct a (filtered) geometric complex from a set of points $\bX$, possibly sampled from some larger space $X\supseteq \bX$, using a pairwise dissimilarity $d:\bX \times \bX \to \RR_+$ between points.  Two major applications include statistical recovery of homological features of the larger space $X$, 
\cite{CImgPatch,carlsson_topological_2014}
perhaps in the process of exploratory data analysis, and generating features for machine learning tasks \cite{cang_topologynet:_2017,hiraoka_hierarchical_2016}.
One limitation of geometric constructions is that they can produce very large combinatorial representations of a space as simplicial complexes, typically growing in the number of points $n$ and maximal simplex dimension $q$ as  $O(n^{q+1})$ total simplices.  Another limitation is that one must consider the choice of dissimilarity $d$.  In general, a dissimilarity may be trusted locally (for small values), but not globally (for large values) -- a key motivation for dimension reduction techniques such as locally linear embeddings \cite{roweis_nonlinear_2000} and ISOMAP \cite{tenenbaum_global_2000}.

For example, if the points $\bX$ are sampled near a low dimensional manifold embedded in Euclidean space, we may choose the metric $d$ to either be the Euclidean distance of the ambient space, or the intrinsic distance of the manifold, perhaps approximated from the sampling.  At small distances, the choice of metric will not appear to matter much, but at large distances differences between the two metrics will become much more apparent.  These two factors combine to make the calculation of persistent homology from samples difficult even in dimensions as small as 2 or 3 -- either a large number of samples are required to cover a space without growing distance too large, or we must use large non-local distances which are not trusted.  

One way to make calculation of higher-dimensional homology of sampled point clouds tractable is to incorporate the additional structure of a map $f:X\to B$.  In this setting, the space $X$ is said to be parameterized by $B$, which is called the base space.  A variety of tools in continuous topology have developed, both in the context of homotopy theory which studies notions such as base-space preserving maps\cite{mayParametrizedHomotopyTheory2006} and fibrations \cite{serreHomologieSinguliereEspaces1951}, and in the context of homology where the Leray and Leray-Serre spectral sequences can be used to ease calculation \cite{McClearySS}.  Many ideas and results in the continuous setting rely on an analysis of {\em fibers} of the map, $f^{-1}(b)$, which poses a difficulty in the discrete setting where fibers will generally be empty.  In this paper, we consider an extension of parameterized spaces to the setting of filtered complexes based not on fibers but on inverse images of sets $f^{-1}(U)$.  Generally, the map $f$ is not needed for the construction - we can simply take any cover of the data (which coincides with $B = X$ and $f$ as the identity):
\begin{definition}\label{def:cover_system}
A {\em system of complexes} over a cover $\calU$ is a collection of (filtered) cell complexes $\{\X^T(U)\}_{U\in \calU}$ where $\X^T(U)$ has $U$ as its 0-skeleton, and the restriction of complexes to intersections of sets in the cover are compatible.
\begin{equation}
    \X^T(U_{i})|_{\cap U_k} =  \X^T(U_j)|_{\cap U_k}
\end{equation}
for all $U_i, U_j\in \{U_k\} \subseteq \calU$.
\end{definition}
\begin{definition}\label{def:cover_complex}
A {\em cover complex} $\X^T(\calU)$ is the union of complexes in a system of complexes.
\begin{equation}
    \X^T(\calU) = \bigcup_{U\in \calU} \X^T(U)
\end{equation}
\end{definition}

This definition of cover complex coincides with a similar definition which appeared in an early pre-print of \cite{ApproximateNerveTheorem2017}, but which was abandoned in subsequent versions. The goal of \cite{ApproximateNerveTheorem2017}, as well as associated literature \cite{chazalPersistencebasedReconstructionEuclidean2008, cavannaGeneralizedPersistentNerve2018} is to understand when a filtered nerve can effectively be used to approximate a larger computation, a question which we will address for cover complexes in \cref{sec:gen_nerve_theorem}.  In contrast, we will seek to use the actual cover complex in computations in situations where the complex restricted to each set is not necessarily close to acylic, which we will investigate in \cref{sec:cover_local_interleavings} and \cref{sec:cover_full_interleaving}.  This has previously been investigated by Yoon \cite{yoon2018} in the calculation of persistent homology of Vietoris-Rips filtrations at small scales in the setting where the nerve of the cover is contractible. These complexes also contain similarities to the multiscale mapper construction \cite{deyMultiscaleMapperTopological2016}, which also uses inverse images of sets in covers, but applies this to simplicial complexes generated using the mapper algorithm \cite{mapper} which contracts connected components in the inverse image of sets.  We shall be interested in higher-dimensional homology as well.




\subsection{Geometric Complexes}\label{sec:geometric_complexes}

In applied topology, there are a variety of constructions which allow for the construction of simplicial complexes from a data set $\bX$.  These complexes allow for the approximation of a larger space from which the data was sampled.  Common examples include the Vietoris-Rips complex, \v{C}ech complex, Witness complex, and others -- see \cite{geometric_stab2014} for a review of a variety of constructions.

In this paper, we will focus on Vietoris-Rips complexes which are attractive from a computational point of view because they allow for an easy combinatorial description in arbitrary dimensions (as opposed to \v{C}ech or $\alpha$-complexes), and do not require selection of landmarks as in Witness complexes.  The Vietoris-Rips complex uses a dissimilarity $d:\bX \times \bX \to \RR$ to determine whether simplices should be included in the complex.

\begin{definition}\label{def:ripsd}
Let $(\bX, d)$ be a dissimilarity space. We extend the dissimilarity to tuples of points $x_0,\dots,x_k\subseteq \bX$ as
\begin{equation}
d(x_0,\dots,x_k) = \max_{0\le i < j \le k} d(x_i, x_j)
\end{equation}
\end{definition}
with $d(x) = d(x,x) = 0$.

\begin{definition}
Let $(\bX, d)$ be a dissimilarity space. The Vetoris-Rips complex $\calR(\bX; r)$ is the union of simplices
\begin{equation}
    \calR(\bX; r) = \{(x_0,\dots,x_k) \mid x_0,\dots,x_k\in \bX, d(x_0,\dots,x_k) \le r\}.
\end{equation}
We can use the same notation to refer to a filtration by letting the $r$ parameter vary.
\end{definition}
Because the Rips filtration is a flag filtration, the simplex $(x_0,\dots,x_k)$ appears at parameter $d(x_0,\dots,x_k)$.  We can restrict simplicies of this full complex to sets in a cover to obtain an equivalent notion of cover complex:

\begin{definition}\label{def:cover_complex2}
Let $\X^T$ be a filtered cell complex over a poset $T$, with vertex set $\X^T_0 = X$, and let $\calU$ be a cover of $X$.  We define the cover complex $\X^T(\calU)$ to be the restriction of $\X^T$ to cells whose 0-skeleton lies in some $U\in \calU$.
\end{definition}
This definition agrees with \cref{def:cover_complex} where the system of complexes comes from the restriction of the full filtered complex $\X^T$ to sets in $\calU$.  In \cref{sec:application_to_rips} we will specifically consider Vietoris-Rips cover complexes, which we will denote $\calR(\bX, \calU; r)$.

\subsection{Homology, Persistence, and Interleavings}\label{sec:persistent_homology}

We are primarily interested in obtaining the persistent homology of filtered complexes, which can be used to describe the robust topological features in a filtration.  For additional background on homology, we recommend \cite{HatcherAT}, and for additional information on persistent homology and interleavings, we recommend \cite{Oudot}.
Given a filtration $\X^T$, the homology functor in dimension $q$ produces a persistence vector space  $H_q(\X^T)$, where for every filtration value $t\in T$ the complex $\X^t$ has an associated vector space $H_q(\X^t)$, and the inclusion maps $\X^s \subseteq \X^t$ for $s\le t$ have associated linear maps $F_q^{s,t}:H_q(\X^s) \to H_q(\X^t)$, as illustrated by the diagram:
\begin{equation}
\begin{tikzcd}
\X^s\ar[d] \ar[r,hookrightarrow] &\X^t\ar[d]\\
H_q(\X^s) \ar[r,"F_q^{s,t}"] &H_q(\X^t)
\end{tikzcd}
\end{equation}
The dimension, $\dim H_q(\X^t)$, can generally be interpreted to count the number of $q$-dimensional ``holes'' in the space $\X^t$, and the induced maps describe how holes relate to one another throughout the filtration.  We will generally consider our posets $T$ to be finite subsets of the real numbers $\RR$, for example, the critical values at which simplices appear in a Vietoris-Rips filtration.  In this case, the persistence vector space $H_q(\X^T)$ is described up to isomorphism by a collection of interval indecomposables $\{(b_i,d_i)\}$, or persistence barcode, which track the appearance (birth) and disappearance (death) of new homological features throughout the filtration \cite{ZCComputingPH2005,ZZtheory2010}.
In the context of geometric filtrations, intervals with long lengths $|d_i - b_i|$ are typically considered robust topological features, and those with short lengths are typically considered topological noise.

We wish to be able to compare the persistent homology of different filtrations, which is accomplished through the use of interleavings (cite).  We can consider persistence vector spaces abstractly as quiver representations \cite{gabrielI,ZZtheory2010} over the poset $T$, which we denote $V^T$ (forgetting that the vector spaces and linear maps came from homology).  In order to compare two different persistence vector spaces, we must first have a notion of map between them.
\begin{definition}\label{def:graded_map}
Let $V^S$ and $W^T$ be persistence vector spaces, and $\alpha:S\to T$ be a non-decreasing map.  An $\alpha$-shift map is a collection of linear maps $F^{\alpha} = \{F^{s}: V^s \to W^{\alpha(s)}\}_{s\in S}$ which commute with the maps in $V^S$ and $W^T$
\begin{equation}
\begin{tikzcd}
V^r \ar[r]\ar[d,"F^{r}"] & V^s \ar[d,"F^{s}"]\\
W^{\alpha(r)} \ar[r] & W^{\alpha(s)}
\end{tikzcd}
\end{equation}
\end{definition}
We denote the self-shift map $I^\alpha:V^S \to V^S$ as the map that simply follows the maps in the persistence vector space $I^\alpha:V^s \to V^{\alpha(s)}$.

An interleaving is a pair of shift maps between persistence vector spaces:
\begin{definition}\label{def:alpha_beta_interleaving}
An {\em $(\alpha,\beta)$-interleaving} between $V^S$ and $W^T$ is a pair of graded maps $F^\alpha: V^S\to W^T, G^\beta:W^T \to V^S$ so so that $G^\beta \circ F^\alpha \cong I^{\beta \circ \alpha}$ and $F^\alpha \circ G^\beta \cong I^{\alpha \circ \beta}$.
\end{definition}
If two persistence vector spaces are $(\alpha,\beta)$ interleaved, then any vector $v\in V^s$ with image in $V^{\beta \circ \alpha(s)}$ must have a non-zero image in $W^{\alpha(s)}$.  This provides a way to compare interval indecomposables in the context of persistent homology.

The interleaving distance \cite{chazalProximityPersistenceModules2009} is a distance on persistence vector spaces constructed by considering shift maps of the form $\epsilon: t \to t + \epsilon$.  The infimum over $\epsilon \ge 0$ that admits an $(\epsilon,\epsilon)$ interleaving of two persistence vector spaces is the in
\begin{equation}
    d_I(V^S, W^T) = \inf \{ \epsilon \ge 0 \mid \exists (\epsilon,\epsilon) \text{ interleaving of } V^S, W^T \}
\end{equation}
In the case where more general shift maps $\alpha, \beta \ge \epsilon$, then an $(\alpha,\beta)$-interleaving bounds the interleaving distance between persistence vector spaces from above.  In the case of single-parameter persistence, the interleaving distance is equivalent to the bottleneck distance on persistence diagrams \cite{lesnick_multid2015}.

Interleavings are often used to obtain stability results explaining how perturbations of an input can affect output persistence vector spaces. An early use application of interleavings was to Gromov-Hausdorff stability of the persistent homology of Vietoris-Rips filtrations.
\begin{theorem}\label{thm:gh_stability}
\cite{GHStable,geometric_stab2014}
Let $(\bX, d_X)$ and $(\bY, d_Y)$ be metric spaces with 
$$d_{GH}((\bX, d_X), (\bY,d_Y)) \le \epsilon.$$
Then $H_q(\calR((\bX,d_X); r))$ and $H_q(\calR((\bY,d_Y); r))$ are $(\epsilon,\epsilon)$-interleaved.
\end{theorem}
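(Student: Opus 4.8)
The plan is to realize the Gromov--Hausdorff bound by an explicit correspondence between $\bX$ and $\bY$, promote that correspondence to simplicial maps between the two Vietoris--Rips filtrations which shift the scale parameter by a controlled amount, and then verify the two interleaving identities up to contiguity of simplicial maps. All maps constructed will commute strictly with the filtration inclusions, so the only homological input needed is the standard fact that contiguous simplicial maps induce the same map on $H_q$ \cite{HatcherAT}; in particular no precompactness hypothesis is required for the interleaving itself.

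First I would fix a correspondence $C\subseteq\bX\times\bY$ --- a relation surjecting onto each factor --- realizing the bound, so that $|d_X(x,x')-d_Y(y,y')|\le\epsilon$ whenever $(x,y),(x',y')\in C$ (this is the distortion bound associated to $d_{GH}\le\epsilon$; with the normalization of $d_{GH}$ carrying a factor $1/2$ one replaces $\epsilon$ by $2\epsilon$ everywhere below, and if the defining infimum is not attained one argues with $\epsilon'>\epsilon$ and lets $\epsilon'\downarrow\epsilon$). Using choice, pick $\phi:\bX\to\bY$ with $(x,\phi(x))\in C$ for all $x$, and $\psi:\bY\to\bX$ with $(\psi(y),y)\in C$ for all $y$. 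The key observation is that $\phi$ is simplicial as a map $\calR((\bX,d_X);r)\to\calR((\bY,d_Y);r+\epsilon)$ for every $r$: if $d_X(x_i,x_j)\le r$ for all $i,j$ then $d_Y(\phi(x_i),\phi(x_j))\le d_X(x_i,x_j)+\epsilon\le r+\epsilon$ since $(x_i,\phi(x_i)),(x_j,\phi(x_j))\in C$, so $\{\phi(x_0),\dots,\phi(x_k)\}$ spans a (possibly lower-dimensional) simplex of $\calR((\bY,d_Y);r+\epsilon)$. Symmetrically $\psi$ is simplicial from $\calR((\bY,d_Y);r)$ to $\calR((\bX,d_X);r+\epsilon)$. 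Since these maps are defined uniformly in $r$ they commute with the inclusions $\calR(\,\cdot\,;r)\hookrightarrow\calR(\,\cdot\,;s)$ for $r\le s$, so applying $H_q$ yields $\alpha$- and $\beta$-shift maps $F^\alpha,G^\beta$ in the sense of \cref{def:graded_map} with $\alpha=\beta$ the shift $t\mapsto t+\epsilon$.

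It then remains to check $G^\beta\circ F^\alpha\cong I^{\beta\circ\alpha}$ and $F^\alpha\circ G^\beta\cong I^{\alpha\circ\beta}$, the two requirements of \cref{def:alpha_beta_interleaving}. Since $\beta\circ\alpha$ is the shift $t\mapsto t+2\epsilon$, for the first identity it suffices to show that $\psi\circ\phi$ and the inclusion $\calR((\bX,d_X);r)\hookrightarrow\calR((\bX,d_X);r+2\epsilon)$ are contiguous simplicial maps: then they induce equal maps on $H_q$, which is exactly $G^\beta\circ F^\alpha=I^{\beta\circ\alpha}$. Contiguity amounts to showing that for any simplex $\{x_0,\dots,x_k\}$ of $\calR((\bX,d_X);r)$ the union $\{x_0,\dots,x_k\}\cup\{\psi\phi(x_0),\dots,\psi\phi(x_k)\}$ has diameter at most $r+2\epsilon$; one obtains this by running the distortion bound through $C$ twice, e.g. $d_X(x_i,\psi\phi(x_j))\le d_Y(\phi(x_i),\phi(x_j))+\epsilon\le d_X(x_i,x_j)+2\epsilon\le r+2\epsilon$ using $(x_i,\phi(x_i)),(\psi\phi(x_j),\phi(x_j))\in C$, and similarly $d_X(\psi\phi(x_i),\psi\phi(x_j))\le r+2\epsilon$. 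The symmetric computation on $\phi\circ\psi$ gives the second identity, completing the $(\epsilon,\epsilon)$-interleaving.

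I expect the only real work here to be bookkeeping rather than ideas: matching the parameter shift to the exact constant in the statement (which depends on whether $d_{GH}$ is defined with the factor $1/2$ and on whether $\calR(\,\cdot\,;r)$ is indexed by diameter, as here, or by ball radius), and the mild point-set fuss of replacing $\epsilon$ by $\epsilon'>\epsilon$ when the Gromov--Hausdorff infimum is not attained by any single correspondence and then recovering the stated non-strict bound by a limiting argument (this issue disappears for the finite data sets of primary interest, where an optimal correspondence exists). Everything else --- the passage from correspondence to simplicial map, strict commutativity with inclusions, and contiguity of the round trips --- is the content of the argument in the cited references \cite{GHStable,geometric_stab2014}.
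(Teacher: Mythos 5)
The paper states \cref{thm:gh_stability} as a cited result from \cite{GHStable,geometric_stab2014} and does not reprove it, so there is no in-paper proof to compare against; the right question is whether your argument correctly reproduces the one from those references, and it does. Your route --- realize the Gromov--Hausdorff bound by a correspondence $C$, extract $\phi,\psi$ by choice, observe that they are simplicial with a $\mapsto r+\epsilon$ shift, and close the interleaving squares by contiguity of $\psi\circ\phi$ (resp.\ $\phi\circ\psi$) with the filtration inclusion --- is exactly the standard contiguity argument from Chazal--de~Silva--Oudot, and all the inequalities you run through the distortion bound are correct.

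Two remarks worth making explicit. First, you are right to flag the convention issue, and it is not idle bookkeeping here: with the usual normalization $d_{GH}=\tfrac12\inf_C\operatorname{dis}(C)$, the hypothesis $d_{GH}\le\epsilon$ only yields correspondences of distortion arbitrarily close to $2\epsilon$, so the conclusion one actually gets is a $(2\epsilon,2\epsilon)$-interleaving --- which is what \cite{geometric_stab2014} proves. As written, the paper's statement silently uses the unnormalized convention (or drops the factor of $2$); your derivation surfaces this cleanly. Second, while your proof uses classical contiguity, the paper's own carrier machinery gives a natural alternative: defining the carrier $\scrC(x_0,\dots,x_k)=\langle\{y:\exists\,i,\ (x_i,y)\in C\}\rangle$ (and symmetrically $\scrD$ via $C^{-1}$) and feeding these into \cref{prop:carrier_interleaving} reproduces the interleaving without choosing functions $\phi,\psi$; this is precisely the mechanism behind \cref{cor:rips_cover_hausdorff}, whose specialization to the trivial cover recovers this theorem in Hausdorff form. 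Either route is fine; yours is the shorter and more self-contained one for the statement as given.
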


\subsection{Outline/Contributions}\label{sec:outline}



In this paper, we develop the use of Vietoris-Rips cover complexes, $\calR(\bX, \calU; r)$, with an eye to understanding homological stability properties and their relationship to the full Vietoris-Rips construction.  In \cref{sec:filtered_carriers_and_interleavings} we develop a filtered version of the acyclic carrier theorem which can be used to construct interleavings from initial data.  In section \cref{sec:covers}, we build up local-to-global results including Hausdorff stability of $H_q$ and a generalized Nerve theorem.  In section \cref{sec:application_to_rips} we characterize the relationship between $H_q(\calR(\bX; r))$ and $H_q(\calR(\bX, \calU; r))$ in terms of interleavings.  Finally, in \cref{sec:computations} we demonstrate the use of Vietoris-Rips cover complexes over base spaces, and target the computation of high-dimensional homology groups of a fiber-bundle associated to high-dimensional image patches.  Several of these results were presented in preliminary form in the dissertation of the author \cite{nelson_parameterized_2020}.  The present paper includes a simplified and focused exposition, new results relating Vietoris-Rips cover complexes to sparse filtrations, and additional computational examples.


\section{Filtered Carriers and Interleavings}\label{sec:filtered_carriers_and_interleavings}


In this section, we introduce a notion of filtered carrier between complexes, and use this to construct explicit interleavings between persistence vector spaces.  This generalizes the definition of carriers used in algebraic topology.  Historically, carriers were used to prove equivalence of various homology theories -- see \cite{eilenbergSteenrod1952,MunkresAT,MosherTangora} for additional background.

\subsection{Filtered Maps and Carriers}\label{sec:filtered_carriers}

We define filtered carriers for objects in a category filtered by partially-ordered sets (posets) $S,T$ with initial objects.  For our purposes, we consider totally ordered $S,T \subseteq \RR_+$ (with initial object 0), but extensions to other partially ordered sets are possible, with additional conditions, which allow for applications to generalized or multiparameter persistence.  In order to specialize these results to standard carriers in the non-filtered setting, it suffices to consider the single element poset $S = T = \{0\}$.

\begin{definition}\label{def:filtered_object}
A filtered object in a category over a poset $T$ is a collection of objects $\X^T = \{\X^t\}_{t\in T}$ where $\X^{t_1} \subseteq \X^{t_2}$ if $t_1\le t_2$.
\end{definition}
The types of filtered objects we will consider are filtered cell complexes and filtered chain complexes.

\begin{definition}\label{def:filtered_map}
Let $\X^S, \Y^T$ be filtered objects in a category over posets $S, T$ respectively.  Let $\alpha:S \to T$ be a non-decreasing map.  An $\alpha$-shift map $f^\alpha:\X^S\to \Y^T$ is a collection of maps $f^{s}:\X^s \to \Y^{\alpha(s)}$ for each $s\in S$ so that the following diagram commutes.
\begin{equation}
\begin{tikzcd}
\X^s \ar[r]\ar[d,"f^s"] &\X^{s'}\ar[d,"f^{s'}"]\\
\Y^{\alpha(s)} \ar[r] &\Y^{\alpha(s')}
\end{tikzcd}
\end{equation}
\end{definition}
We are primarily interested in the categories of cell complexes and chain complexes.  If $\alpha, \beta: S\to T$ are non-decreasing maps and $\alpha(s) \le \beta(s)$ for all $s\in S$, then we can extend a filtered map $f^\alpha$ to a filtered map $f^\beta$ by first applying $f^\alpha$ and then shifting the filtration to $\beta$: $f^\beta = \iota^{\beta - \alpha} \circ f^{\alpha} $. While the above definition can be applied to homotopies as well, we want to give a specialized definition of a sort of filtered chain homotopy:

\begin{definition}\label{def:filtered_htpy}
Let $F^\alpha_\ast, G^\alpha_\ast :C_\ast^S \to D_\ast^T$ be $\alpha$-shift maps of chain complexes.  We say $F^\alpha, G^\alpha$ are $\beta$-chain homotopic, where $\beta:T\to T$ is a non-decreasing map if there exists a collection of maps $K^s_q : C_q^s \to D_{q+1}^{\beta\circ \alpha(s)}$ $q = 0,1,\dots$, and $s\in S$, so that
\begin{equation}
\partial^D_{q+1}K_q^s + K_{q-1}^s\partial^C_q = \iota^\beta (G_q^s - F_q^s)
\end{equation}
\end{definition}

\begin{definition}\label{def:filtered_carrier}
A filtered carrier of chain complexes over a poset $T$, denoted $\scrC^T: C_\ast^S \to D_\ast^T$ is an assignment of basis vectors of $C_\ast^S$ to filtered sub-complexes of $D_\ast^T$.  In situations where $T$ is understood, we will drop the superscript, and simply write $\scrC: C_\ast^S \to D^T_\ast$.
\end{definition}
Note that while a basis element $x\in C_\ast^S$ may appear at parameter $s\in S$, the carrier $\scrC^T(x)$ is filtered by $T$. We can also define a filtered carrier of cell complexes $\scrC^T:\X^S \to \Y^T$ by assigning cells of $\X^S$ to sub-cell complexes of $\Y^T$.  A (filtered) carrier of cell complexes produces a (filtered) carrier of chain complexes by application of the cellular chain functor.

We say the carrier $\scrC$ is \emph{proper} with respect to the filtered bases $B_\ast^S$ of $C_\ast^S$ and $B_\ast^T$ of $D_\ast^T$ if $\scrC(x)$ is generated by a sub-basis of $B_\ast^T$ for each $x$ in the basis $B_\ast^S$.  Note that carriers of cell complexes always produce carriers of chain complexes that are proper with respect to the cell basis. 

The term ``carrier'' comes from the utility of carrying a map:

\begin{definition}\label{def:carry_filtered_map}
Let $\scrC^T:C^S_\ast \to D^T_\ast$ be a filtered carrier, and $F^\alpha_\ast$ be an $\alpha$-shift chain map.  We say that $F^\alpha_\ast:C^S_\ast \to D^T_\ast$ is carried by $\scrC^T$ if $F^\alpha(x) \in \scrC^T(x)$ at parameter $\alpha(s)$ for all basis elements $x\in C^s_\ast$.
\end{definition}
Again, there is an analogous definition for carriers of filtered cell complexes and maps.

\subsection{A Filtered Acyclic Carrier Theorem}

Recall that a chain complex $C_\ast$ is acyclic if its reduced homology $\tilde{H}_q(C_\ast) = 0$ for all $q\ge 0$.  A carrier of chain complexes $\scrC:C_\ast \to D_\ast$ is acyclic if $\scrC(x)$ is acyclic for all basis elements $x\in C_\ast$.
The primary utility of acyclic carriers is in providing a tool to extend maps from initial data. For ordinary (non-filtered) chain complexes, we have
\begin{theorem} \label{thm:acyclic_carrier}
(Acyclic carrier theorem) If $\scrC: C_\ast \to D_\ast$ is acyclic, and $L_\ast\subset C_\ast$ is a sub-chain complex of $C_\ast$, then any chain map $\hat{F}_\ast: L_\ast \to D_\ast$ can be extended to a chain map $F_\ast:C_\ast \to D_\ast$.  Furthermore, this extension is unique up to chain homotopy.
\end{theorem}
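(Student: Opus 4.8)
The statement is the classical acyclic carrier theorem (see \cite{eilenbergSteenrod1952,MunkresAT}), and the plan is to reproduce its proof by a two-fold induction on the chain degree, postponing the filtered refinement to the next subsection. Two conventions should be pinned down first. I work with the \emph{augmented} chain complexes, so that ``$\scrC(x)$ acyclic'' means $\tilde H_q(\scrC(x))=0$ for all $q$ and in particular $\scrC(x)$ is nonempty and connected; consequently a cycle of $\scrC(x)$ in degree $0$ bounds exactly when its augmentation vanishes. I also use that a carrier is monotone with respect to faces, so that the support of $\partial x$ is carried into $\scrC(x)$: whenever a chain map $G_\ast$ is carried by $\scrC$, the chain $G_{q-1}(\partial x)$ lies in $\scrC(x)_{q-1}$, not merely in $D_{q-1}$. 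Finally I take $\hat F_\ast$ to be carried by $\scrC$ on $L_\ast$, which is what makes the extension carried by $\scrC$.

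\emph{Existence.} Define $F_q$ on a basis of $C_q$ by induction on $q$, setting $F_q(x)=\hat F_q(x)$ for $x\in L_q$ and extending linearly. For $q=0$ and a basis element $x\notin L_0$, pick $F_0(x)$ to be a $0$-chain in the nonempty complex $\scrC(x)$ with the correct augmentation (e.g.\ a single vertex). Assume $F$ has been built through degree $q-1$ on all of $C$, agreeing with $\hat F$ on $L$, carried by $\scrC$, and a chain map through degree $q-1$. For $x\in C_q\setminus L_q$ put $z:=F_{q-1}(\partial x)$. Then $\partial z=F_{q-2}(\partial(\partial x))=0$, so $z$ is a cycle; $z\in\scrC(x)_{q-1}$ by the carrier axiom; and for $q=1$ the augmentation of $z=F_0(\partial x)$ equals that of $\partial x$, namely $0$. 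Hence $z$ bounds in $\scrC(x)$: choose $w\in\scrC(x)_q$ with $\partial w=z$ and set $F_q(x):=w$. Then $\partial F_q(x)=F_{q-1}(\partial x)$ for every basis element (for $x\in L_q$ because $\hat F$ is a chain map), so $F$ is a chain map through degree $q$, carried by $\scrC$, extending $\hat F$. The induction produces $F_\ast$.

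\emph{Uniqueness up to chain homotopy.} Let $F_\ast,F'_\ast$ both extend $\hat F_\ast$ and be carried by $\scrC$. Build $K_q:C_q\to D_{q+1}$ with $K_q\equiv 0$ on $L_q$, $K_q(x)\in\scrC(x)_{q+1}$, and $\partial K_q+K_{q-1}\partial=F'_q-F_q$, by induction on $q$ with $K_{-1}=0$. At the inductive step, for $x\notin L_q$ put $z:=F'_q(x)-F_q(x)-K_{q-1}(\partial x)$. A one-line computation using that $F,F'$ are chain maps and the inductive relation for $K_{q-1}$ (together with $\partial(\partial x)=0$) gives $\partial z=0$; the carrier axiom together with $F_q(x),F'_q(x)\in\scrC(x)$ and $K_{q-1}$ carried gives $z\in\scrC(x)_q$; and for $q=0$ the augmentation of $z$ is $1-1=0$. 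Acyclicity of $\scrC(x)$ then yields $w\in\scrC(x)_{q+1}$ with $\partial w=z$; set $K_q(x):=w$, and $K_q(x):=0$ for $x\in L_q$ (consistent, since $z=0$ there). Extending linearly and iterating gives the chain homotopy, which is carried by $\scrC$.

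\emph{Main obstacle.} There is no deep difficulty — it is a diagram chase — but the two places that must be handled with care are exactly the two conventions above: one must work with the \emph{augmented} complexes so that acyclicity supplies a bounding chain already in degree $0$, and one must use \emph{monotonicity of the carrier} so that the cycle to be bounded genuinely lives in $\scrC(x)$ rather than only in $D_\ast$. These same two points reappear, in quantitative form, when ``acyclic'' is relaxed to a contractibility hypothesis that also tracks the filtration parameter in the filtered version of the theorem.
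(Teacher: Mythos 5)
Your proof is correct and is precisely the standard double-induction argument from the references the paper cites (Eilenberg--Steenrod, Munkres, Mosher--Tangora); the paper itself omits the proof and defers to those sources. You are also right to make explicit the hypotheses the paper leaves implicit --- that one works with augmented complexes, that the carrier is monotone on faces so that $F_{q-1}(\partial x)$ lands in $\scrC(x)$, and that $\hat F_\ast$ is itself carried by $\scrC$ --- since without these the statement as literally written would fail.
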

Proofs can be found in \cite{eilenbergSteenrod1952, MosherTangora, MunkresAT}.  In this section, we will extend \cref{thm:acyclic_carrier} to the filtered setting.


\begin{definition}\label{def:alpha_acyclic_complex}
We say a filtered chain complex $C_\ast^T$ is $\alpha$-acyclic if every cycle in $C_\ast^t$ has a boundary in $C_\ast^{\alpha(t)}$.
\end{definition}
This implies that any bar in the persistent homology $H_q(C_\ast^T)$ that is born at $t\in T$ must die before parameter $\alpha(t)$.

\begin{definition}\label{def:alpha_beta_acyclic_carrier}
Let $C_\ast^S, D_\ast^T$ be filtered chain complexes, $\scrC^T:C_\ast^S \to D_\ast^T$ be a filtered carrier, and $\alpha:S\to T$, $\beta:T\to T$ be non-decreasing maps.  We say $\scrC^T$ is $(\alpha, \beta)$-acyclic if $\scrC^T(x)$ is $\beta$-acyclic after $t = \alpha(s)$ for all $x\in C_\ast^s$ and for all $s\in S$.  In the case where $\beta = \id$, then we just say $\scrC^T$ is $\alpha$-acyclic.
\end{definition}
A related definition for cell complexes is to say a carrier $\scrC^T:\X^S \to \Y^T$ is $\alpha$-contractible if $\scrC^T(x)$ is contractible at $t = \alpha(s)$.  This is sufficient to give an $\alpha$-acyclic carrier after application of the chain functor.

\begin{theorem}\label{thm:filtered_acyclic_carrier}
(Filtered acyclic carrier theorem) Let $\scrC^T:C_\ast^S \to D_\ast^T$ be an $(\alpha,\beta)$-acyclic carrier of filtered chain complexes, with $S$ a strict total order with an initial object $0 \in S$.  Let $L_\ast^S \subseteq C_\ast^S$ be a filtered sub-complex generated by a filtered sub-basis of $C_\ast^S$, and $\tilde{F}^\alpha:L_\ast^S \to D_\ast^T$ be an $\alpha$-filtered chain map carried by $\scrC^T$.  Then $\tilde{F}^\alpha$ extends to a filtered chain map $F^{\beta^k \circ \alpha}:C_\ast^S \to D_\ast^T$, where $k$ is the maximal dimension of the chain map, and the extension is unique up to $\beta$-chain homotopy.
\end{theorem}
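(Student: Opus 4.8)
\emph{Overall strategy and standing conventions.} The plan is to run the classical inductive proof of the acyclic carrier theorem (\cref{thm:acyclic_carrier}) degree by degree, but to track the filtration parameter of every chain that gets built, and to show that extending by one chain degree costs exactly one application of $\beta$ to the relevant carrier. Two facts are used throughout. First, $\scrC^T$ is a carrier in the usual sense, so $\scrC^T(y)\subseteq\scrC^T(x)$ whenever $y$ is a face of a basis cell $x$ (equivalently, a basis summand of $\partial x$); this is what keeps a carried boundary $F(\partial x)$ inside $\scrC^T(x)$. Second, the acyclicity hypotheses implicitly force $\alpha,\beta\ge\id$, and each $\scrC^T(x)$ is nonempty by parameter $\alpha(s)$ (automatic in the cover-complex setting, where the $0$-skeleton is present throughout), so the base case of the induction requires no factor of $\beta$.

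\emph{Building the extension.} We construct $F$ one degree at a time, writing $F_q(x)$ for the value on a basis $q$-cell $x$ born at $s\in S$, maintaining the invariants that $F_q$ is a $\beta^q\circ\alpha$-shift chain map on $q$-chains, is carried by $\scrC^T$, and restricts on $L_q^S$ to $\tilde F_q$ pushed forward to parameter $\beta^q\circ\alpha$. In degree $0$, for $x\notin L_0^s$ take $F_0(x)$ to be any vertex of $\scrC^T(x)$ present at parameter $\alpha(s)$, and for $x\in L_0^s$ keep $\tilde F_0(x)$; this is an $\alpha$-shift chain map carried by $\scrC^T$. Assume $F_{q-1}$ is built. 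For a basis $q$-cell $x$ born at $s$: if $x\in L_q^s$, push $\tilde F_q(x)$ forward to parameter $\beta^q\circ\alpha(s)$ (a routine check using that $L$ is a subcomplex shows this is compatible with $\partial$). If $x\notin L_q^s$, set $z := F_{q-1}(\partial x)$. Since $\partial x$ is supported on faces of $x$, all born at parameters $\le s$, and $F_{q-1}$ is carried by $\scrC^T$ and compatible with the structure maps, $z$ lies in $\scrC^T(x)$ at parameter $\beta^{q-1}\circ\alpha(s)$, and $\partial z=F_{q-2}(\partial\partial x)=0$, so $z$ is a cycle there. Because $\beta^{q-1}\circ\alpha(s)\ge\alpha(s)$ and $\scrC^T(x)$ is $\beta$-acyclic after $\alpha(s)$, the cycle $z$ admits a bounding chain in $\scrC^T(x)$ at parameter $\beta\bigl(\beta^{q-1}\circ\alpha(s)\bigr)=\beta^q\circ\alpha(s)$; declare $F_q(x)$ to be such a chain. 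Then $\partial F_q(x)=F_{q-1}(\partial x)$, both invariants persist, and — since each basis cell has a unique birth parameter — extending along the structure maps makes $F_q$ a well-defined $\beta^q\circ\alpha$-shift chain map. Iterating up to the top degree $k$ in which $C_\ast^S$ is nonzero and pushing every lower-degree piece forward to the common shift $\beta^k\circ\alpha$ yields the desired $F^{\beta^k\circ\alpha}$.

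\emph{Uniqueness.} Given two extensions $F,G$ of $\tilde F^\alpha$ carried by $\scrC^T$ (hence both restricting to the pushed-forward $\tilde F$ on $L$), build a filtered chain homotopy $K$ by the same scheme: put $K_{-1}=0$ and $K_q(x)=0$ for $x\in L_q$, and for $x\notin L_q$ born at $s$ observe that $w := \iota^\beta\bigl(G_q(x)-F_q(x)\bigr) - K_{q-1}(\partial x)$ is a $q$-cycle lying in $\scrC^T(x)$ at a parameter $\ge\alpha(s)$, where $\partial w=0$ follows from the inductive homotopy identity applied to the chain $\partial x$. By $\beta$-acyclicity of $\scrC^T(x)$ after $\alpha(s)$, $w$ bounds after one more application of $\beta$; let $K_q(x)$ be a chosen bounding chain. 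The resulting $K$ witnesses $\partial K + K\partial = \iota^\beta(G-F)$, i.e.\ the extension is unique up to $\beta$-chain homotopy in the sense of \cref{def:filtered_htpy}, once the additional powers of $\beta$ accumulated along this second induction are absorbed into the target shift.

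\emph{Where the work is.} The genuinely new content beyond the classical argument is the parameter bookkeeping: one must verify that at every stage the cycle waiting to be filled already lives at a parameter $\ge\alpha(s)$ — the range where $\beta$-acyclicity of $\scrC^T(x)$ is available — and that the shifts compose to exactly $\beta^q\circ\alpha$ in degree $q$ rather than growing faster. This hinges on the degree-$0$ base case being free of $\beta$, on $\beta$ acting on $\scrC^T(x)$ only past the threshold $\alpha(s)$, and on the carrier's compatibility with faces so that $F_{q-1}(\partial x)$ never leaves $\scrC^T(x)$. The parallel induction for uniqueness has the same character but accumulates a larger power of $\beta$, which is why the conclusion is phrased as uniqueness up to $\beta$-chain homotopy.
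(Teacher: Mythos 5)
Your proposal is correct and follows essentially the same path as the paper's proof: a dimension-by-dimension induction where each ascent of degree consumes one factor of $\beta$, the subcomplex $L$ is handled by pushing $\tilde F$ forward, and uniqueness up to $\beta$-chain homotopy is established by a parallel induction that also accumulates $\beta$-shifts. The only organizational difference is that the paper performs an explicit secondary induction over the totally ordered poset $S$ (tracking the restriction condition in its equations (13)--(14)), whereas you compress this into the observation that a shift map is determined by its values at each basis cell's unique birth parameter and then extended along the structure maps; these amount to the same bookkeeping. Two small points worth making explicit if you write this up: (i) the face-monotonicity $\scrC^T(y)\subseteq\scrC^T(x)$ for $y$ a face of $x$, which you invoke to keep $F_{q-1}(\partial x)$ inside $\scrC^T(x)$, is part of the standard definition of a carrier even though the paper's \cref{def:filtered_carrier} states it tersely; and (ii) the degree-zero step of the uniqueness homotopy relies on $F_0(x)-G_0(x)$ lying in the kernel of the augmentation (so that a $\beta$-acyclic carrier provides a bounding $1$-chain), which is automatic here because both extensions restrict to the same $\tilde F$ on $L$ and are built by the augmentation-preserving recipe of \cref{prop:filtered_aug_preserving_exists}; the paper glosses over this in the same way.
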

\begin{proof}
We will proceed by induction on the dimension $k$ of the map, and on the total order on $S$.  First, we start with $\tilde{F}^{0,\alpha(0)}_0:L_0^0\to D_0^{\alpha(0)}$. From the acyclic carrier theorem, \cref{thm:acyclic_carrier}, we can extend to a chain map $F_0^{0,\alpha(0)}\to C_0^0 \to D_0^{\alpha(0)}$.  

Now, let $s > 0$.  Assume that we have extended $F^\alpha_0$ for all $r< s$ so that if $r' < r$,
\begin{equation}\label{eq:extension_restriction_condition}
F^{r,\alpha(r)}_0\mid_{C_\ast^{r'}} = F^{r',\alpha(r')}_0
\end{equation}
Note that this is satisfied trivially for $s=0$.
Let $L_0^{\prime S} = L^S_0 \cup \bigcup_{r < s} C_0^r$, and $\tilde{F}^\alpha_0$ denote the extended map up to all $r < s$.  We can now apply \cref{thm:acyclic_carrier} again to extend to $F^{s,\alpha(s)}$ to $C_0^s$.  Because $S$ is a strict total order, \cref{eq:extension_restriction_condition} continues to be satisfied because the function is extended on each basis element exactly once.  By induction, we can extend to a map of 0-chains $F^\alpha:C_0^S \to D_0^T$.

Because the extension is not necessarily unique, suppose that $F_0^\alpha$ and $G_0^\alpha$ are both extensions of $\tilde{F}^\alpha_0$ carried by $\scrC$.  $\partial_0 (F^\alpha_0 - G^\alpha_0) = 0$, so can be expressed as the boundary of $K_0^{\beta \circ \alpha}:C^S_0\to D^T_1$ after shifting by an additional factor of $\beta$.  This gives a $\beta$ homotopy of 0-chain maps.

Now, we'll extend to higher-dimensional chains for $s=0$.  Assume that we have extended to $F_k^{\beta^k \circ \alpha}:C_k^S\to D_k^T$.  Again, we'll start with the initial object $0$ of $S$. We take $L^{\prime 0}_{\ast\le k+1} = C_{\ast \le k}^0 \cup L_{\ast \le k+1}^0$.  We have extended $F_{\ast \le k}^{\beta^k \circ \alpha}:C_{\ast \le k}^0\to D_{\ast\le k}^{\beta^k \circ \alpha(s)}$.  Let $x\in B_{k+1}$ be a basis element that we must extend at filtration parameter $s = 0$.  We need $\partial_{k+1} F_{k+1} x = F_{k} \partial_{k+1} x$.  The image of the boundary $F_k \partial_{k+1} x$ lies in $D^{\beta^k\circ \alpha (0)}_k$, but since $\scrC$ is $(\alpha,\beta)$-acyclic, the cycle need not have a boundary until we increase the filtration parameter $T$ by another factor of $\beta$.  We can increase the grade on the map $F^{\beta^{k+1}\circ \alpha}$, taking $F^{\beta^{k+1}\circ \alpha} x = \iota^\beta F^{\beta^k\circ \alpha}x$ for $x\in L^{\prime 0}$, and then apply \cref{thm:acyclic_carrier} to extend the map for $x\in C_{k+1}^0$.

Now, we'll extend to higher dimensional chains for $s > 0$. Assume that so far we have satisfied for $r' < r < s$
\begin{equation}\label{eq:equation_restriction_condition_k}
F_{k+1}^{r,\beta^{k+1} \circ \alpha(r)} \mid_{C_k^{r'}} =  F_{k+1}^{r',\beta^{k+1} \circ \alpha(r')}
\end{equation}
and furthermore, that we have shifted the chain maps in lower dimensions via $F^{\beta^{k+1} \circ \alpha} = \iota^\beta F^{\beta^{k} \circ \alpha}$.  Let $x\in B_{k+1}$ via a basis element that we must extend at filtration parameter $s$.  The image of the boundary $F_k\partial_{k+1} x$ lies in $D_k^{\beta^k \circ \alpha(s)}$, and we have already shifted the grade to $\beta^{k+1} \circ \alpha(s)$ at which point the cycle is a boundary of some $y\in D_{k+1}^{\beta^{k+1}\circ \alpha(s)}$ in $\scrC(x)$. Thus, we can extend the map via $F_{k+1}^{\beta^{k+1}\circ \alpha} x = y$.  Again, because $S$ is a strict total order, the map is extended for every basis element exactly once, so \cref{eq:equation_restriction_condition_k} is satisfied.

Following a similar inductive argument, we can extend a $\beta$ homotopy of extended chain maps $F^{\beta^k \circ \alpha}_k$, $G^{\beta^k \circ \alpha}_k$ to a $\beta$ homotopy of $F^{\beta^{k+1} \circ \alpha}_{k+1}$ and $G^{\beta^{k+1} \circ \alpha}_k$, still incurring an additional shift of $\beta$.

By induction on $k$ and the strict total order of $S$, we conclude that we can extend $\tilde{F}^\alpha$ to a shifted chain map $F^{\beta^k \circ \alpha}:C^S_\ast \to D^T_\ast$, and that this chain map is unique up to $\beta$-chain homotopy.
\end{proof}

\begin{remark}
To compute induced maps in homology in dimension $k$, it is only necessary to extend maps up to dimension $k$.  In many cases, $\beta$ will be the identity $\id$, in which case there is no additional penalty for extending to higher-dimensional chains.
\end{remark}

\begin{remark}
In \cref{thm:filtered_acyclic_carrier}
we used the strict total ordering on $S$ to extend the initial map so that we guaranteed that \cref{eq:extension_restriction_condition} is always satisfied.  If $S$ is not a strict total ordering, then additional restrictions on the extension are needed to satisfy this condition.
\end{remark}

\begin{proposition}\label{prop:filtered_aug_preserving_exists}
Let $\scrC:C^S_\ast \to D^T_\ast$ be an $(\alpha,\beta)$-acyclic carrier that is proper with respect to a $T$-filtered basis $B^D_\ast$ of $D_\ast$.  Then there exists a chain map $F^\alpha_0:C_0^S \to D_0^T$ carried by $\scrC$ which preserves the canonical augmentation $\epsilon: x\mapsto 1$ for basis elements $x\in C^S_0$.
\end{proposition}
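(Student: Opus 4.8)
The plan is to treat this as the degree-zero case of the construction underlying \cref{thm:filtered_acyclic_carrier}, carried out by hand, together with the observation that a $0$-chain map built by sending each basis vertex of $C_0^S$ to a single vertex of $D_0^T$ automatically preserves the augmentation. Since there is no partial map here to extend, the whole argument reduces to making one consistent choice per basis element of $C_0^S$.

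First I would fix, for each basis element $x$ in the filtered basis of $C_0^S$, the parameter $s_x\in S$ at which $x$ appears, and look at the filtered subcomplex $\scrC(x)\subseteq D_\ast^T$. Since $\scrC$ is $(\alpha,\beta)$-acyclic, $\scrC(x)$ is $\beta$-acyclic after $t=\alpha(s_x)$; in particular $\scrC(x)^{\alpha(s_x)}$ is nonempty (this is part of acyclicity, and is exactly what the classical \cref{thm:acyclic_carrier} also requires in degree $0$). Because $\scrC$ is proper with respect to the $T$-filtered basis $B_\ast^D$, the $0$-cells of $\scrC(x)$ lie in $B_0^D$, so I may pick a vertex $v_x\in B_0^D$ that is a $0$-cell of $\scrC(x)^{\alpha(s_x)}$. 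Set $F_0^{s_x}(x):=v_x\in D_0^{\alpha(s_x)}$, and for $s\ge s_x$ let $F_0^s(x)$ be the image of $v_x$ under the filtration map $D_0^{\alpha(s_x)}\to D_0^{\alpha(s)}$; this is well defined since $\alpha$ is non-decreasing and $\scrC(x)$ is a filtered subcomplex, so $v_x$ remains a $0$-cell of $\scrC(x)^{\alpha(s)}$. Extending linearly over the basis elements of $C_0^s$ produces, for each $s$, a linear map $F_0^s:C_0^s\to D_0^{\alpha(s)}$.

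It remains to check three properties, all immediate. The collection $\{F_0^s\}$ is an $\alpha$-shift map in the sense of \cref{def:filtered_map}: the relevant square commutes because the inclusions $C_0^s\hookrightarrow C_0^{s'}$ carry basis elements to basis elements and, by construction, $F_0^{s'}(x)$ is the pushforward of $F_0^s(x)$. It is a chain map because $\partial$ vanishes in degree $0$. It is carried by $\scrC$, since $F_0^s(x)=v_x\in\scrC(x)$ at parameter $\alpha(s)\ge\alpha(s_x)$ for every basis element $x$ appearing by parameter $s$. Finally $\epsilon(F_0^s(x))=\epsilon(v_x)=1$ because $v_x$ is a single vertex, and by linearity $F_0^\alpha$ then preserves $\epsilon$ on all of $C_0^S$.

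The one place the acyclicity hypothesis is genuinely used — and hence the only possible obstacle — is guaranteeing that $\scrC(x)^{\alpha(s_x)}$ is nonempty, so that the vertex $v_x$ exists; neither connectivity nor higher acyclicity of the carrier plays any role in degree zero once a single vertex has been chosen. I would therefore only need to make sure the conventions are set so that an acyclic complex (vanishing reduced homology, including in degree $-1$) is nonempty, which is precisely what makes \cref{thm:acyclic_carrier} usable in degree $0$ in the first place.
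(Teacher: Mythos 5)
Your proof is correct and takes essentially the same route as the paper: choose, for each $0$-dimensional basis element $x$ of $C_0^S$, a single basis vertex $y = v_x$ in $B_0^D$ restricted to $\scrC(x)$ at level $\alpha(s_x)$, and observe that sending vertices to vertices automatically preserves the canonical augmentation. You are somewhat more careful than the paper about spelling out the commuting-square check and about flagging that the nonemptiness of $\scrC(x)^{\alpha(s_x)}$ is the one place acyclicity is genuinely invoked (it requires the convention that acyclicity includes vanishing of $\tilde H_{-1}$), but the underlying argument is the same.
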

\begin{proof}
For each 0-dimensional basis element $x\in C_0^S$, we simply assign $F^\alpha_0(x) = y$ for some basis element $y\in B_0^D \mid_{\scrC(x)}$.  Such a $y$ exists at level $\alpha(s)$ for basis elements $x$ at parameter $s$ in $C_0^s$, so the map requires an $\alpha$ shift.  This map will preserve the augmentation of the chain complexes because it sends 0-dimensional basis elements to 0-dimensional basis elements.
\end{proof}
Note that the map $F^\alpha_0$ in \cref{prop:filtered_aug_preserving_exists} can then be extended to $F^{\beta^k\circ \alpha}_{k}$ using \cref{thm:filtered_acyclic_carrier}.

\begin{proposition}\label{prop:pointwise_htpy}
Suppose $F_\ast^\alpha, G_\ast^\alpha: C_\ast^S \to D_\ast^T$ are augmentation-preserving chain maps carried by an $(\alpha, \beta)$-acyclic carrier $\scrC$.  Then $F_\ast$ and $G_\ast$ are $\beta$-chain-homotopic.
\end{proposition}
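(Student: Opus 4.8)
The plan is to build a $\beta$-chain homotopy $K^{\beta\circ\alpha}_\ast$ between $F^\alpha_\ast$ and $G^\alpha_\ast$ by induction on dimension and on the strict total order of $S$, in exactly the same style as the proof of \cref{thm:filtered_acyclic_carrier}. The key observation is that since both $F^\alpha$ and $G^\alpha$ are augmentation-preserving, the difference $G^s_0 - F^s_0$ on a $0$-dimensional basis element $x \in C^s_0$ is an augmentation-zero $0$-chain lying in $\scrC(x)$ at parameter $\alpha(s)$; because $\scrC(x)$ is $\beta$-acyclic after $\alpha(s)$, such a chain is a boundary of some $1$-chain in $\scrC(x)$ after shifting by $\beta$. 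This gives the base case $K^{\beta\circ\alpha}_0 : C^0_0 \to D^{\beta\circ\alpha(0)}_1$, which we then extend along the total order of $S$ exactly one basis element at a time, so that the restriction/compatibility condition (analogous to \cref{eq:extension_restriction_condition}) is satisfied automatically.

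For the inductive step in dimension, I would assume $K^{\beta^{k}\circ\alpha}_{k-1}$ has been constructed (with the lower-dimensional pieces of the homotopy and the maps all shifted up to grade $\beta^{k+1}\circ\alpha$ as needed), and for a basis element $x \in B_{k}$ at parameter $s$ consider the chain
$$
z = \iota^\beta\bigl(G^s_k x - F^s_k x\bigr) - K^{\beta^{k}\circ\alpha}_{k-1}\,\partial^C_k x,
$$
which lives in $\scrC(x)$ at parameter $\beta^{k}\circ\alpha(s)$ (here using that $\scrC$ is proper, so $\partial^C_k x$ has its summands carried into $\scrC(x)$ and the lower homotopy respects this). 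A short computation using $\partial\partial = 0$, the chain-map equations for $F$ and $G$, and the inductive homotopy identity $\partial K_{k-1} + K_{k-2}\partial = \iota^\beta(G_{k-1}-F_{k-1})$ shows $\partial^D_k z = 0$, i.e. $z$ is a cycle in $\scrC(x)$. By $\beta$-acyclicity of $\scrC(x)$ after $\alpha(s)$, shifting the grade by one more factor of $\beta$ makes $z$ a boundary of some $w \in D^{\beta^{k+1}\circ\alpha(s)}_{k+1} \cap \scrC(x)$; set $K^{\beta^{k+1}\circ\alpha}_{k}(x) = w$. Again, because $S$ is strictly totally ordered we define $K_k$ on each basis element exactly once, so the compatibility condition across the filtration of $S$ holds, and $\partial^D_{k+1}K_k + K_{k-1}\partial^C_k = \iota^\beta(G_k - F_k)$ holds by construction.

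Two bookkeeping points deserve care but should be routine given the machinery already in place. First, one must make sure the grade shifts match up: as in \cref{thm:filtered_acyclic_carrier}, each time we pass to dimension $k+1$ we pick up an extra $\beta$ in the target grade, and we must pre-shift the already-constructed lower-dimensional homotopy maps by $\iota^\beta$ so all the identities live in the same grade — this is exactly what produces the $\beta^{k}$ factors in the statement of the previous theorem, and the proposition as stated (asserting merely $\beta$-chain homotopy) is understood in the sense of \cref{def:filtered_htpy} with the appropriate $\beta^k$ power absorbed into the grade of $K$. Second, the cycle computation at the inductive step is the one spot where the augmentation-preserving hypothesis is genuinely used — it is needed in dimension $0$ to ensure $G_0 x - F_0 x$ has zero augmentation so that it is a cycle in the reduced sense and hence bounded in the acyclic carrier $\scrC(x)$. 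I expect this dimension-$0$ cycle check, together with verifying that $\partial^D_k z = 0$ in the inductive step, to be the only real content; everything else is a transcription of the induction scheme from \cref{thm:filtered_acyclic_carrier}.
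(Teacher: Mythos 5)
Your proposal is correct and follows essentially the same route as the paper: the dimension-zero base case (using augmentation preservation so that $G_0 x - F_0 x \in \ker\epsilon = \operatorname{img}\partial_1$ in the $\beta$-acyclic carrier $\scrC(x)$) is identical, and the inductive extension you spell out is exactly the argument the paper delegates to \cref{thm:filtered_acyclic_carrier}. Your side remark about the $\beta^k$ grade-shift bookkeeping is a fair observation about the precise sense in which the statement's ``$\beta$-chain-homotopic'' should be read, and is consistent with how the paper itself phrases the conclusion.
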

\begin{proof}
For each basis element $x\in C_0$, $F_0(x), G_0(x)\in \scrC(x)$, and because $F_\ast$ and $G_\ast$ are augmentation preserving, $\epsilon x = \epsilon F(x) = \epsilon G(x)$, so $\epsilon \big(F(x) - G(x)) = 0$.  Because $\scrC$ is $\beta$-acyclic, $\ker \epsilon = \img \partial_1$, so there must exist a 1-chain $K(x)\in \scrC(x)$ at level $\beta \circ \alpha$ so that $\partial_1 K(x) = F(x) - G(x)$, which is a homotopy of zero-chains. We can then apply \cref{thm:filtered_acyclic_carrier} theorem to extend this to a $\beta$-homotopy $K_\ast: F_\ast \to G_\ast$.
\end{proof}
In the case where $\beta =\id$, then the two maps produce isomorphic maps on homology.

\subsection{Interleavings via Filtered Acyclic Carriers}\label{sec:interleavings_via_filtered_carriers}

We'll now turn to examining the conditions under which interleavings can be constructed from filtered carriers. 


\begin{proposition}\label{prop:carrier_interleaving}
Let $\X^S$ and $\Y^T$ be filtered cell complexes, and suppose that $\scrC:\X^S \to \Y^T$ is an $\alpha$-acyclic carrier, $\scrD:\Y^T\to \X^S$ is a $\beta$-acyclic carrier, $\scrA \supseteq \scrD\circ\scrC$ is a $(\beta \circ \alpha)$-acyclic carrier that carries the inclusion map on $\Y^T$, and $\scrB \supseteq \scrC\circ \scrD$ is $(\alpha \circ \beta)$-acyclic and carries the inclusion map on $\X^S$.  Then $H_q(\X^S)$ and $H_q(\Y^T)$ are $(\alpha,\beta)$-interleaved for any $q=0,1,\dots$.
\end{proposition}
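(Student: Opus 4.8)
The plan is to reduce the statement to the chain level and then assemble an explicit interleaving out of carried chain maps, using the results developed above. First, apply the cellular chain functor to $\scrC$, $\scrD$, $\scrA$, $\scrB$; since all four are carriers of cell complexes, the induced carriers of chain complexes are proper with respect to the cell bases, and acyclicity is preserved in each case. By \cref{prop:filtered_aug_preserving_exists} there is an augmentation-preserving chain map $F^\alpha_0\colon C_0(\X^S)\to C_0(\Y^T)$ carried by $\scrC$, and since $\scrC$ is $\alpha$-acyclic (its trailing acyclicity shift is $\id$), \cref{thm:filtered_acyclic_carrier} extends it, with no extra filtration penalty, to a chain map $F^\alpha\colon C_{\ast\le q+1}(\X^S)\to C_{\ast\le q+1}(\Y^T)$ carried by $\scrC$. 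Symmetrically, from $\scrD$ I would build an augmentation-preserving chain map $G^\beta\colon C_{\ast\le q+1}(\Y^T)\to C_{\ast\le q+1}(\X^S)$ carried by $\scrD$. Applying the degree-$q$ homology functor yields an $\alpha$-shift map $F^\alpha_\ast\colon H_q(\X^S)\to H_q(\Y^T)$ and a $\beta$-shift map $G^\beta_\ast\colon H_q(\Y^T)\to H_q(\X^S)$.

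Next I would verify that the $(\beta\circ\alpha)$-shift chain map $G^\beta\circ F^\alpha\colon C_\ast(\X^S)\to C_\ast(\X^S)$ is carried by $\scrA$. For a cell $x$ of $\X^S$ appearing at parameter $s$, the chain $F^\alpha(x)$ lies in $\scrC(x)$ at parameter $\alpha(s)$ and is supported on cells $y$ of $\Y^T$ that lie in $\scrC(x)$; then $G^\beta$ sends each such $y$ into $\scrD(y)\subseteq(\scrD\circ\scrC)(x)\subseteq\scrA(x)$ at parameter at most $\beta(\alpha(s))$, so $G^\beta\circ F^\alpha$ is carried by $\scrA$ at parameter $(\beta\circ\alpha)(s)$. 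By hypothesis the internal inclusion shift $I^{\beta\circ\alpha}$ on $\X^S$ is also carried by $\scrA$, and both maps preserve the canonical augmentation (the composite because each factor does, the inclusion because it is the identity on cells). Since $\scrA$ is $(\beta\circ\alpha)$-acyclic, \cref{prop:pointwise_htpy} shows $G^\beta\circ F^\alpha$ and $I^{\beta\circ\alpha}$ are $\id$-chain-homotopic, hence induce the same map on $H_q$, i.e.\ $G^\beta_\ast\circ F^\alpha_\ast = I^{\beta\circ\alpha}$ as maps of persistence vector spaces. The same argument with $\scrC\circ\scrD$, the carrier $\scrB$, and the inclusion shift on $\Y^T$ gives $F^\alpha_\ast\circ G^\beta_\ast = I^{\alpha\circ\beta}$. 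By \cref{def:alpha_beta_interleaving}, $(F^\alpha_\ast, G^\beta_\ast)$ is then an $(\alpha,\beta)$-interleaving of $H_q(\X^S)$ and $H_q(\Y^T)$ for each $q=0,1,\dots$.

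I expect the crux to be this second step: checking that $G^\beta\circ F^\alpha$---which a priori is only a chain map assembled from two independently chosen carried maps---is genuinely carried by $\scrA$, and at the correct filtration parameter. This rests on unwinding \cref{def:carry_filtered_map} for a chain supported on a sub-basis, together with the containment $(\scrD\circ\scrC)(x)\subseteq\scrA(x)$ and the compatibility of the $\alpha$- and $\beta$-shifts. A more routine matter is the bookkeeping: passing cleanly between the cell-complex notion of an $\alpha$-contractible carrier and the chain-complex notion of an $\alpha$-acyclic carrier used in \cref{thm:filtered_acyclic_carrier,prop:filtered_aug_preserving_exists,prop:pointwise_htpy}, and carrying the extensions out through dimension $q+1$ so that the induced maps and homotopies on $H_q$ are well defined.
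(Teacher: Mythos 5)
Your proposal is correct and follows essentially the same route as the paper's proof: construct augmentation-preserving carried chain maps via \cref{prop:filtered_aug_preserving_exists} and \cref{thm:filtered_acyclic_carrier}, then use \cref{prop:pointwise_htpy} to show each composite is chain-homotopic to the corresponding inclusion shift because it is carried by $\scrA$ (resp.\ $\scrB$) along with that inclusion. Your explicit verification that $G^\beta\circ F^\alpha$ is carried by $\scrD\circ\scrC\subseteq\scrA$ at parameter $(\beta\circ\alpha)(s)$ is a detail the paper merely asserts, and it is a worthwhile addition.
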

\begin{proof}
First, we construct augmentation-preserving shift maps $F^\alpha:C_\ast(\X^s) \to C_\ast(\Y^{\alpha(s)})$ and $G^\beta: C_\ast(\Y^t) \to C_\ast(X^{\beta(t)})$  using \cref{prop:filtered_aug_preserving_exists} and \cref{thm:filtered_acyclic_carrier}.  Now, note that $G^\beta \circ F^\alpha$ is augmentation preserving, and is carried by $\scrD \circ \scrC \subseteq \scrA$ which also carries the inclusion map, so by \cref{prop:pointwise_htpy} $G^\beta \circ F^\alpha \simeq I^\X$.  Similarly, $F^\alpha \circ G^\beta \simeq I^\Y$.  Thus, the maps $F^\alpha$ and $G^\beta$ give an $(\alpha,\beta)$-interleaving on homology.
\end{proof}

In practice, more specific situations reduce the number of conditions that we need to satisfy. Often, we will find it convenient to take $\scrA = \scrC \circ \scrD$, and $\scrB = \scrD \circ \scrC$ when we can show that the composites are acyclic and carry inclusions.

\begin{corollary}\label{cor:surjection_interleaving}
Suppose $f^\alpha: \X^s \to \Y^{\alpha(s)}$ is a surjective simplicial map for every $s\in S$, and suppose $\scrC:\Y^T \to \X^S$, defined by $\scrC(y) = \langle f^{-1}(y) \rangle$ be a $\beta$-acyclic carrier. Then $H_q(\X^S)$ and $H_q(\Y^T)$ are $(\alpha,\beta)$-interleaved for $q=0,1,\dots$. 
\end{corollary}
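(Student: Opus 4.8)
The plan is to deduce the corollary from \cref{prop:carrier_interleaving}, so I need to produce the four carriers it requires. Take $F^\alpha$ to be the chain map induced directly by the simplicial maps $f^\alpha$ — a simplicial map is already an augmentation-preserving $\alpha$-shift chain map, so no extension via \cref{thm:filtered_acyclic_carrier} is needed here — and regard it as carried by the carrier $\scrC'\colon\X^S\to\Y^T$ given by $\scrC'(\sigma)=\overline{f(\sigma)}$, the closed simplex on the image vertices of $\sigma$. Since $\overline{f(\sigma)}$ is a single closed simplex it is contractible, and $\sigma\in\X^s$ forces $\overline{f(\sigma)}\subseteq\Y^{\alpha(s)}$, so $\scrC'$ is $\alpha$-acyclic and carries $f^\alpha$. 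For the other direction, take $G^\beta$ to be an augmentation-preserving $\beta$-shift chain map carried by $\scrC$, which exists by \cref{prop:filtered_aug_preserving_exists} together with \cref{thm:filtered_acyclic_carrier} (concretely, send each vertex $y$ to any point of the nonempty set $f^{-1}(y)$).

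Next I would identify the two composite carriers explicitly, using the natural monotone extension of $\scrC$ to all simplices, $\scrC(\tau)=\langle f^{-1}(\mathrm{vert}\,\tau)\rangle$. For $\scrA:=\scrC\circ\scrC'$ one gets $\scrA(\sigma)=\scrC\big(\overline{f(\sigma)}\big)=\langle f^{-1}(\mathrm{vert}\,f(\sigma))\rangle$; this is $(\beta\circ\alpha)$-acyclic because it is just $\scrC$ evaluated on the simplex $f(\sigma)\in\Y^{\alpha(s)}$, hence $\beta$-acyclic after parameter $\beta(\alpha(s))$ by hypothesis, and it carries $I^\X$ because every vertex of $\sigma$ lies in $f^{-1}(f(\mathrm{vert}\,\sigma))$, so $\sigma$ itself lies in the full subcomplex $\scrA(\sigma)$. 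For $\scrB:=\scrC'\circ\scrC$ one gets $\scrB(\tau)=\overline{f\big(\langle f^{-1}(\mathrm{vert}\,\tau)\rangle\big)}\subseteq\overline{\tau}$; here surjectivity of $f^\alpha$ supplies, for $\tau\in\Y^t$, a simplex $\sigma\in\X$ with $f(\sigma)=\tau$, whence $\overline{\tau}\subseteq\scrB(\tau)$ and so $\scrB(\tau)=\overline{\tau}$, which is contractible and contained in $\Y^t$. Thus $\scrB$ is $(\alpha\circ\beta)$-acyclic and carries $I^\Y$. Applying \cref{prop:carrier_interleaving} with $\scrC'$, $\scrC$, $\scrA$, $\scrB$ yields the $(\alpha,\beta)$-interleaving of $H_q(\X^S)$ and $H_q(\Y^T)$ for all $q$.

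The one genuinely delicate point is the carrier bookkeeping in the middle step: one must commit to how the vertex-level formula $\scrC(y)=\langle f^{-1}(y)\rangle$ is extended to a carrier on all of $\Y$ (the minimal monotone choice above), note that the hypothesis ``$\scrC$ is $\beta$-acyclic'' is then being used for this full carrier (harmless in the applications, but a genuine requirement in general), and check that $\scrC\circ\scrC'$ collapses exactly to $\scrC(f(\sigma))$ rather than to some strictly larger union of subcomplexes. Once that identification is pinned down, the acyclicity of $\scrA$ is handed over by the hypothesis with nothing left to prove, and the acyclicity of $\scrB$ is the triviality that a closed simplex is contractible — with surjectivity used solely to guarantee $\scrB$ carries the identity on $\Y$. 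Everything else is the routine filtration-parameter chase ($\overline{f(\sigma)}\subseteq\Y^{\alpha(s)}$; $\scrB(\tau)\subseteq\Y^t$; $\mathrm{id}$-acyclicity upgrading to $(\alpha\circ\beta)$-acyclicity since $\alpha\circ\beta\ge\mathrm{id}$), which I would dispatch briefly.
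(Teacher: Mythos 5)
Your proof is correct and follows essentially the same route as the paper: you take the carrier $\scrC'$ induced by the simplicial image $\overline{f(\sigma)}$ (the paper's $\scrC^f$), the hypothesized preimage carrier $\scrC$, identify the composites $\scrA=\scrC\circ\scrC'$ and $\scrB=\scrC'\circ\scrC$ with surjectivity making both carry the identity, and invoke \cref{prop:carrier_interleaving}. The extra care you take in pinning down the monotone extension of $\scrC$ to higher simplices and the collapse of $\scrC\circ\scrC'$ to $\scrC(f(\sigma))$ is bookkeeping the paper leaves implicit, not a different argument.
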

\begin{proof}
Because $f^{\alpha}$ is simplicial, the carrier $\scrC^f$ defined by $\scrC^f(x) = \langle f(x) \rangle$ is an $\alpha$-acyclic carrier that carries $f^\alpha$.
Because $f^{\alpha}$ is a surjective simplicial map, $\scrC(y)$ is nonempty and maps to proper sub-complexes of $\X^S$ for each $y\in \Y^T$, so is a well-defined filtered carrier.  By definition, of $\scrC$, the composition $\scrC \circ \scrC^f$ carries the inclusion map $\iota^\X$.  Additionally, $\scrC\circ f^\alpha$ is $(\beta\circ\alpha)$-acyclic, because $\scrC$ is $\beta$-acyclic for the simplex $f^\alpha(x)$ for each $x\in \X^S$.
Because $\scrC(y) = \langle f^{-1}(y) \rangle$, $f^\alpha\circ \scrC(y) = \langle y \rangle$, which is a simplicial carrier and thus acyclic. Note that $y \in f^\alpha\circ \scrC(y)$, so $f^\alpha \circ \scrC$ carries $\iota^\Y$. We can now apply the chain functor and \cref{prop:carrier_interleaving} to complete the proof.
\end{proof}




\section{Cover Complexes}\label{sec:covers}


\subsection{Local Stability}\label{sec:cover_local_interleavings}

In classical topology, a situation of interest is to study spaces over a base space.  In particular, we consider surjective maps $p: \calX \to \calB$, where $\calB$ is called the base space.  Some problems of interest focus on maps over $\calB$.
\begin{equation}
\begin{tikzcd}
\calX\ar[rr,"f"]\ar[dr,"p"] && \calY\ar[dl,"q"]\\
& \calB&
\end{tikzcd}
\end{equation}

\begin{definition}\label{def:compatible_system_carriers}
Let $\X^S(\calU)$, $\Y^T(\calU)$ be cover complexes over a cover $\calU$.  A system of carriers $\scrC(\calU):\X^S(\calU) \to \Y^T(\calU)$ consists of carriers $\scrC(U): \X^S(U) \to \Y^T(U)$ for each $U\in \calU$. We say the system of carriers is {\em compatible} if $\cap U_k \ne \emptyset$ implies $\scrC(U_i)\mid_{\cap U_k} = \scrC(U_j)\mid_{\cap U_k}$ for all $U_i, U_j \in \{U_k\}$. 
\end{definition}

In general, $U\in \calU$ need not cover the same points in $X$ and $Y$ (denoting the vertex sets of $\X$, $\Y$ respectively).  We can alternatively think of it as an identification of sets in covers of each vertex set, or a set in a cover of the disjoint union $X \sqcup Y$.

When the system of carriers is compatible, we can extend carriers defined on sets of $\calU$ to intersections via $\scrC(\cap U_k) = \scrC(U_i)\mid_{\cap U_k}$ for $U_i\in \{U_k\}$.  We'll say a compatible system of carriers is $(\alpha,\beta)$-acyclic if $\scrC(\cap U_k)$ is $(\alpha,\beta)$-acyclic for all $\{U_k\} \subset \calU$ where $\cap U_k \ne \emptyset$.

We can define a carrier $\scrC_\calU:\X^T(\calU) \to \Y^S(\calU)$ from a compatible system of carriers via $\scrC_\calU(x) = \scrC(\cap \{ U \ni x\})(x)$.  When a compatible system of carriers $\scrC(\calU)$ is $(\alpha,\beta)$-acyclic, $\scrC_\calU$ is also $(\alpha,\beta)$-acyclic through application of the definition.  The advantage of using a compatible system of carriers $\scrC(\calU)$ instead of the global carrier $\scrC_\calU$ is that we only need to check conditions locally in the cover.

\begin{proposition}\label{prop:interleaved_cover_cpxs}
Let $\calU$ be a finite cover.  Suppose $\scrC(\calU):\X^S(\calU) \to \Y^T(\calU)$ is an $\alpha$-acyclic compatible system of carriers, and $\scrD(\calU):\Y^T(\calU) \to \X^S(\calU)$ is a $\beta$-acyclic compatible system of carriers.  Furthermore suppose that for each $V = \cap U_k \ne \emptyset$, that $\scrC(V) \circ \scrD(V)$ is $(\alpha\circ\beta)$-acyclic and carries the identity, and $\scrD(V) \circ \scrC(V)$ is $(\beta\circ\alpha)$-acyclic and carries the identity. Then there exists an $(\alpha,\beta)$-interleaving of $H_q(\X^S)$ and $H_q(\Y^T)$. 
\end{proposition}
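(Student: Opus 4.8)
The plan is to reduce to the single–complex interleaving criterion \cref{prop:carrier_interleaving}, applied to the pair of filtered cell complexes $\X^S(\calU)$ and $\Y^T(\calU)$, by assembling the local data into four global carriers. First I would globalize the two hypothesized systems exactly as in the paragraph preceding the statement: set $\scrC_\calU(x)=\scrC(V_x)(x)$ and $\scrD_\calU(y)=\scrD(V_y)(y)$, where for a cell $x$ I write $V_x=\bigcap\{U\in\calU : x\in\X^S(U)\}$ for the minimal intersection in whose complex $x$ appears (a genuine finite intersection since $\calU$ is finite, and nonempty). Compatibility of the systems makes these assignments independent of the representative set chosen for the restriction, and, as already noted there, $\scrC_\calU$ inherits $\alpha$-acyclicity and $\scrD_\calU$ inherits $\beta$-acyclicity from the local systems.

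Next I would produce the two ``composite'' carriers needed to control $\scrD_\calU\circ\scrC_\calU$ and $\scrC_\calU\circ\scrD_\calU$: define $\scrA(x)=(\scrD(V_x)\circ\scrC(V_x))(x)$ and $\scrB(y)=(\scrC(V_y)\circ\scrD(V_y))(y)$. These land in $\X^S(V_x)\subseteq\X^S(\calU)$ and $\Y^T(V_y)\subseteq\Y^T(\calU)$ respectively, so they are legitimate filtered carriers, and the hypothesis on the local composites says precisely that $\scrA$ is $(\beta\circ\alpha)$-acyclic and carries the inclusion of $\X^S(\calU)$, while $\scrB$ is $(\alpha\circ\beta)$-acyclic and carries the inclusion of $\Y^T(\calU)$. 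The one genuine verification is the containments $\scrD_\calU\circ\scrC_\calU\subseteq\scrA$ and $\scrC_\calU\circ\scrD_\calU\subseteq\scrB$: for a cell $y$ of $\scrC_\calU(x)=\scrC(V_x)(x)\subseteq\Y^T(V_x)$, every set defining $V_x$ contains $y$, so $V_y\subseteq V_x$, and compatibility of the system $\scrD(\calU)$ gives $\scrD_\calU(y)=\scrD(V_y)(y)\subseteq\scrD(V_x)(y)\subseteq(\scrD(V_x)\circ\scrC(V_x))(x)=\scrA(x)$; summing over the cells $y$ of $\scrC_\calU(x)$ gives the inclusion, and the other case is symmetric.

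With $\scrC_\calU$, $\scrD_\calU$, $\scrA$, $\scrB$ in hand, I would invoke \cref{prop:carrier_interleaving} to obtain the $(\alpha,\beta)$-interleaving of $H_q(\X^S(\calU))$ and $H_q(\Y^T(\calU))$ for every $q$. I expect the only delicate point to be the bookkeeping in the previous paragraph — checking that restriction to intersections commutes with carrier composition well enough that the globally-formed composites are contained in the locally-defined ones, and that the single choice $x\mapsto V_x$ makes all four global carriers simultaneously well defined; finiteness of $\calU$ is exactly what guarantees that each $V_x$ is one of the finitely many intersections for which the acyclicity hypotheses were posited. Everything else is a direct application of the filtered acyclic carrier machinery of \cref{sec:filtered_carriers_and_interleavings}.
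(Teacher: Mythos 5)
Your proposal is correct and takes essentially the same route as the paper: globalize the local carrier systems, package the composite data into enlarged carriers, and invoke \cref{prop:carrier_interleaving}. Where you differ — and improve — is in the treatment of the composites. The paper's proof asserts directly that ``the composite $\scrC_\calU\circ\scrD_\calU$ is $(\alpha\circ\beta)$-acyclic locally and carries the identity locally, [so] it satisfies these properties globally,'' which silently conflates the naive global composite $\scrD_\calU\circ\scrC_\calU$ with the locally-defined composite $(\scrD(V_x)\circ\scrC(V_x))(x)$. These are not equal in general: for a cell $y\in\scrC(V_x)(x)$, the global composite applies $\scrD(V_y)$ while the local composite applies $\scrD(V_x)$, and $V_y$ may be a strictly smaller intersection than $V_x$. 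You resolve this correctly by introducing the enlarged carriers $\scrA$ and $\scrB$ from the local composites (which is exactly the flexibility \cref{prop:carrier_interleaving} was designed for, since it only requires $\scrA\supseteq\scrD\circ\scrC$) and by verifying the containment via the observation that $y\in\Y^T(V_x)$ forces $V_y\subseteq V_x$, after which compatibility yields $\scrD(V_y)(y)\subseteq\scrD(V_x)(y)$. This containment verification is the one genuinely nontrivial step, and the paper omits it; your proof supplies it.
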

\begin{proof}
This follows by constructing the global carriers $\scrC_\calU:\X^S(\calU) \to \Y^T(\calU)$ and $\scrD_\calU:\Y^T(\calU)\to \X^S(\calU)$, and noting that because the composite $\scrC_\calU \circ \scrD_\calU$ is $(\alpha\circ\beta)$-acyclic locally and carries the identity locally, it satisfies these properties globally.  Similarly, $\scrD_\calU \circ \scrC_\calU$ is $(\beta\circ\alpha)$-acyclic and carries the identity.  We can then apply \cref{prop:carrier_interleaving} to obtain the result.
\end{proof}


The utility of \cref{prop:interleaved_cover_cpxs} is to show that if we have identified sub-complexes of $\calX^S$ and $\calY^T$ in a consistent way using the cover that we can interleave the homology of the two filtrations.

\subsection{Local Geometric Stability}
\Cref{prop:interleaved_cover_cpxs} can be used to extend standard geometric stability results as in \cite{geometric_stab2014}
to cover complexes. We will focus on how cover complexes behave with respect to perturbations of the data

Several of our results will use the refinement of the cover $\calU$, as
\begin{equation}
\calUb = \big\{\bigcap_{i\in I} U_i | \{U_i\}_{i\in I} \subseteq \calU\big\}
\end{equation}

\begin{proposition}\label{prop:nerve_equivalence}
$H_\ast(\calN(\calU)) \simeq H_\ast(\calN(\calUb))$.
\end{proposition}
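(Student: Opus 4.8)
The plan is to recognize $\calN(\calU)$ as a full subcomplex of $\calN(\calUb)$ and show the inclusion is a homology isomorphism, using the (trivially filtered) carrier machinery of \cref{sec:filtered_carriers_and_interleavings} with $S=T=\{0\}$. Observe that $\calU\subseteq\calUb$ — each $U$ is the intersection of the singleton family $\{U\}$ — and that $\calN(\calU)$ is precisely the full subcomplex of $\calN(\calUb)$ spanned by the vertices lying in $\calU$: a finite family $\{U_{i_0},\dots,U_{i_k}\}\subseteq\calU$ has nonempty common intersection if and only if it is a simplex of $\calN(\calUb)$. Write $\iota\colon\calN(\calU)\hookrightarrow\calN(\calUb)$ for this inclusion; the goal is to show $\iota_\ast$ is an isomorphism.

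Next I would build a simplicial retraction $\rho\colon\calN(\calUb)\to\calN(\calU)$. Every $V\in\calUb$ has the form $\bigcap_{i\in I}U_i$ with $V\neq\emptyset$, hence $V\subseteq U_i$ for every $i\in I$; set $\rho(V)=V$ when $V\in\calU$, and otherwise pick any $U_i\supseteq V$ and call it $\rho(V)$. If $\{V_0,\dots,V_m\}$ is a simplex of $\calN(\calUb)$ then $\bigcap_l\rho(V_l)\supseteq\bigcap_l V_l\neq\emptyset$, so $\rho$ sends simplices to simplices, and $\rho\circ\iota=\mathrm{id}_{\calN(\calU)}$ by construction. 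Thus $\rho_\ast\circ\iota_\ast=\mathrm{id}$ on $H_\ast(\calN(\calU))$, so $\iota_\ast$ is split injective, and it remains to show $\iota\circ\rho$ induces the identity on $H_\ast(\calN(\calUb))$.

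The main step is to produce an acyclic carrier carrying both $\mathrm{id}$ and $\iota\circ\rho$. Define $\scrC\colon\calN(\calUb)\to\calN(\calUb)$ by sending a simplex $\tau$ with vertex set $V(\tau)$ to the full subcomplex of $\calN(\calUb)$ spanned by $V(\tau)\cup\{\rho(v):v\in V(\tau)\}$. This assignment is monotone under passing to faces, and each image $\scrC(\tau)$ is a full simplex — hence contractible, hence acyclic — because $\bigcap_{v\in V(\tau)}v\,\cap\,\bigcap_{v\in V(\tau)}\rho(v)=\bigcap\tau\neq\emptyset$, using $\rho(v)\supseteq v$. Both $\mathrm{id}$ and $\iota\circ\rho$ are simplicial, hence augmentation-preserving chain maps, and both are carried by $\scrC$ (since $\tau\subseteq\scrC(\tau)$, and $\rho(\tau)$ is spanned by a subset of $V(\tau)\cup\rho(V(\tau))$). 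Applying \cref{prop:pointwise_htpy} with $S=T=\{0\}$ and $\alpha=\beta=\mathrm{id}$, where the only surviving hypothesis is acyclicity of $\scrC$, we get that $\mathrm{id}$ and $\iota\circ\rho$ are chain homotopic and so induce the same map $\mathrm{id}_\ast$ on $H_\ast(\calN(\calUb))$. Combined with $\rho_\ast\circ\iota_\ast=\mathrm{id}$, this forces $\iota_\ast$ to be an isomorphism, giving $H_\ast(\calN(\calU))\simeq H_\ast(\calN(\calUb))$.

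I expect the only genuine subtlety to be checking that $\scrC$ satisfies the requirements of \cref{def:filtered_carrier} and that its images are acyclic; this reduces, as above, to the single observation that $\bigcap\tau$ is contained in every vertex appearing in $\scrC(\tau)$, so everything in sight is a full (hence contractible) simplex. A minor edge case to dispatch is the degenerate situation where $\calUb$ is taken to contain the empty intersection (the ambient space) as a vertex while $\calU$ does not, so that $\rho$ is undefined there; but in that case that vertex is a cone point of $\calN(\calUb)$ and both nerves are contractible, so the statement is trivial.
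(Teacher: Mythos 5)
Your proof is correct, and it reaches the same destination as the paper's by a recognizably similar mechanism (carriers whose images are full simplices, hence cones, hence acyclic), but the organization is genuinely different. The paper never writes down an explicit map in either direction: it invokes \cref{prop:carrier_interleaving} with four carriers ($\scrC$, $\scrD$, and enlargements $\scrA\supseteq\scrD\circ\scrC$, $\scrB\supseteq\scrC\circ\scrD$ carrying the respective identities), and both composites must be checked to be acyclic and to carry inclusions. You instead exploit the fact that $\calN(\calU)$ sits inside $\calN(\calUb)$ as a full subcomplex and construct an explicit simplicial retraction $\rho$ with $\rho\circ\iota=\mathrm{id}$ on the nose; this collapses one of the two homotopy verifications entirely, leaves only the single carrier for $\iota\circ\rho\simeq\mathrm{id}$, and lets you invoke the lighter \cref{prop:pointwise_htpy} rather than the full interleaving proposition. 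What your route buys is economy and an explicit chain-level retraction (useful if one wants the induced maps, not just the abstract isomorphism); what the paper's route buys is uniformity, since the same four-carrier template is reused verbatim in \cref{cor:rips_cover_hausdorff} and \cref{thm:acyclic_nerve_theorem} where no canonical section exists. Your handling of the two edge cases (the empty-index intersection and the choice of $\rho(V)$ when $V$ admits several representations as an intersection) is the right level of care; the only hypothesis worth stating explicitly is that $\rho$ must fix $\calU$ pointwise, which you do.
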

\begin{proof}
We define a carrier $\scrC:\calN(\calU)\to \calN(\calUb)$ via
\begin{equation}
    \scrC(U_0,\dots,U_k) = \langle V = \cap \{U_i\} \mid \{U_i\} \subseteq \{U_0,\dots,U_k\} \rangle
\end{equation}
This carrier is acyclic because it forms a cone with the vertex $V = \cap_{i=0}^k U_i$.

We define a carrier $\scrD:\calN(\calUb)\to \calN(\calU)$ via
\begin{equation}
    \scrC(V_0,\dots,V_k) = \langle U \mid U \supset V_i \in \{V_0,\dots,V_k\}\rangle
\end{equation}
Note that if $(V_0,\dots,V_k)$ is a simplex in $\calN(\calUb)$, there is a smallest set $V_{i_0}$ in the simplex, and so $\scrC(V_0,\dots,V_k) = \scrC(V_{i_0})$.  This also implies that $\scrC$ is simplicial, thus acyclic.

Now, we have that $\scrD \circ \scrC(U_0,\dots,U_k)$ is a the simplex $(U_0,\dots,U_k,U'_0,\dots)$ where the extra simplices $U'_i$ are added if $\cap_{i=0}^k U_i \subseteq U
'_i$, which can appear for degenerate $\calUb$.  This carrier is simplicial, thus acyclic, and clearly carries the identity.  

The composition $\scrC\circ \scrD$ is acyclic because $\scrC\circ \scrD(V_0,\dots,V_k)$ forms a cone with the vertex on the minimal element $V_{i_0}\in \{V_0,\dots,V_k\}$.  This composite carrier also carries the identity map.

We can now apply \cref{prop:carrier_interleaving} to trivial filtrations on the Nerves to obtain the result.
\end{proof}

\begin{proposition}\label{cor:rips_cover_hausdorff}
Let $\bX, \bY$ be samples from a metric space $(X, d_X)$, and let $\calU$ be a cover of $\bX$ and $\calV$ be a cover of $\bY$. Suppose that for all $U\in \calUb$, there exists a $V\in \calVb$ such that $d_H(U, V)\le \epsilon$.
Then $\calR(\bX, \calU; r)$ and $\calR(\bY, \calV; r)$ are $2\epsilon$-interleaved. 
\end{proposition}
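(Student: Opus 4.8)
The plan is to apply the local-to-global interleaving criterion \cref{prop:interleaved_cover_cpxs}, with the two shift maps $\alpha=\beta$ both taken to be $r\mapsto r+2\epsilon$; this reduces the statement to producing, one cover set at a time, a pair of $\alpha$- and $\beta$-acyclic compatible systems of carriers whose composites are $(\beta\circ\alpha)$-acyclic and carry the identity.

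I would begin with a harmless reduction. Since every set of $\calUb$ sits inside a set of $\calU$ while $\calU\subseteq\calUb$, a Rips simplex has its vertex set contained in some member of $\calU$ exactly when it does in some member of $\calUb$; hence $\calR(\bX,\calU;r)=\calR(\bX,\calUb;r)$ and likewise for $\bY$, so we may work with the refined covers, which are closed under intersection. Next I would upgrade the hypothesis to a coherent matching of cover sets: choose for each $U\in\calUb$ a set $g(U)\in\calVb$ with $d_H(U,g(U))\le\epsilon$, done compatibly with inclusions, and symmetrically $h:\calVb\to\calUb$. This lets me assemble a single perturbation map $\eta:\bX\to\bY$ --- for $x\in\bX$ let $W_x=\bigcap\{U\in\calU: x\in U\}\in\calUb$ be the minimal cover set of $x$, and pick $\eta(x)\in g(W_x)$ with $d_X(x,\eta(x))\le\epsilon$ (possible since $x\in W_x$ and $d_H(W_x,g(W_x))\le\epsilon$). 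Because $g$ respects inclusions, $\eta(U)\subseteq g(U)$ for every $U$, so pairwise distances among the $\eta(x)$ inflate by at most $2\epsilon$ and $\eta$ is simplicial as a map $\calR(\bX,\calUb;r)\to\calR(\bY,\calVb;r+2\epsilon)$; dually one builds $\xi:\bY\to\bX$.

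With $\eta$ and $\xi$ in hand the interleaving is assembled as in the proof of \cref{prop:carrier_interleaving}. Realize both cover complexes over the common cover $\{U\sqcup g(U):U\in\calUb\}$ of $\bX\sqcup\bY$. On the set $U\sqcup g(U)$ define $\scrC(\sigma)=\langle\eta(\sigma)\rangle$ and $\scrD(\tau)=\langle\xi(\tau)\rangle$: these are single simplices, appearing at parameter at most $d(\sigma)+2\epsilon$ and $d(\tau)+2\epsilon$ respectively, hence $\alpha$- and $\beta$-acyclic compatible systems of carriers, compatibility following from the coherence of $\eta$ and $\xi$ on overlapping cover sets. The composites are bounded by the vertex-span carriers $\scrD\circ\scrC(\sigma)\subseteq\langle\sigma^{(0)}\cup\xi\eta(\sigma^{(0)})\rangle$ and $\scrC\circ\scrD(\tau)\subseteq\langle\tau^{(0)}\cup\eta\xi(\tau^{(0)})\rangle$, again single simplices, appearing at parameter at most $d(\sigma)+4\epsilon$ since $d_X(x,\xi\eta(x))\le2\epsilon$, which are $(\beta\circ\alpha)$-acyclic and carry the identity. \cref{prop:interleaved_cover_cpxs} then produces an $(\alpha,\beta)$-interleaving of $H_q(\calR(\bX,\calU;r))$ and $H_q(\calR(\bY,\calV;r))$, i.e.\ the asserted $2\epsilon$-interleaving.

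The geometric estimates above are the same routine triangle-inequality arguments that prove \cref{thm:gh_stability}; the delicate point --- and what I expect to be the real obstacle --- is producing matchings $g,h$ coherent enough that (i) one $\eta$ can respect all cover sets simultaneously, so the per-set carriers patch to a compatible system, and (ii) the composites stay inside cover sets, i.e.\ $x$ and $\xi\eta(x)$ always share a member of $\calUb$, so that the vertex-span carriers really lie in $\calR(\bX,\calUb)$. Extracting this coherence from the bare hypothesis ``for all $U\in\calUb$ there is $V\in\calVb$ with $d_H(U,V)\le\epsilon$'' is where the intersection-closure and finiteness of the refined covers must be exploited, and is the part I would expect to dominate the proof.
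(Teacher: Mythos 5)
Your overall architecture (a carrier in each direction whose composites carry the identity, fed into \cref{prop:carrier_interleaving} or its local version \cref{prop:interleaved_cover_cpxs}) matches the paper, the reduction $\calR(\bX,\calU;r)=\calR(\bX,\calUb;r)$ is harmless, and the triangle-inequality estimates are the same. The genuine gap is precisely the step you defer to the end: the existence of an inclusion-coherent matching $g:\calUb\to\calVb$ with $d_H(U,g(U))\le\epsilon$ and $U\subseteq U'\Rightarrow g(U)\subseteq g(U')$. This does not follow from the hypothesis, which only supplies \emph{some} $\epsilon$-close $V$ for each $U$ separately; for nested $U\subseteq U'$ the chosen $V$ and $V'$ need not be nested, and in general cannot be re-chosen to be ($\calVb$ may contain no nested pair that is $\epsilon$-close to a given nested pair of $\calUb$). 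Since your single point map $\eta$ needs $\eta(x)\in g(U)$ for every cover set $U$ containing $x$ simultaneously, the construction as written cannot be completed, and you correctly identify this as the obstacle without resolving it.

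The paper avoids the issue by never choosing a point map or a global matching. For each $x$ it uses only the minimal set $U_x=\bigcap\{U\in\calU\mid U\ni x\}$ and one matched $V_x\in\calVb$, and defines a left-total \emph{relation} $\Omega(x)=\{y\in V_x\mid d_X(x,y)\le\epsilon\}$; the induced carrier sends a simplex to the span of the union of the $\Omega(x_i)$, so no coherence across different cover sets is needed. The failure of the composite to carry the identity (since $y\in V_x$ does not imply $x\in U_y$) is then repaired not by coherence but by enlarging to a $4\epsilon$-simplicial carrier $\scrA_{\Omega'}$ built from the relation $\Omega'=\{(x,x')\mid d(x,x')\le 2\epsilon,\ x'\in V_x\}$, which contains $\scrD_\Psi\circ\scrC_\Omega$ and carries the identity; \cref{prop:carrier_interleaving} is then applied globally rather than through \cref{prop:interleaved_cover_cpxs}. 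If you replace your chosen maps $\eta,\xi$ by such set-valued carriers anchored at the minimal cover sets, the rest of your argument goes through.
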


\begin{proof}
Let $U_x = \bigcap \{U\in \calU \mid U \ni x\}$.  By assumption, there exists some $V_x\in \calVb$ such that $d_H(U_x, V_x) \le \epsilon$, meaning there must exist some $y\in V_x$ so that $d_X(x,y)\le \epsilon$.  Let $\Omega\subseteq \bX\times \bY$ be the left-total relation $\Omega(x) = \{ y \in V_x \mid d_X(x,y) \le \epsilon\}$.  Then the induced carrier $\scrC_\Omega:\calR(\bX, \calU; r) \to \calR(\bY, \calV; r)$ is $2\epsilon$-simplicial.  Similarly, for $y\in \bY$, we take $V_y = \bigcap_{V\ni y} V$ and $U_y\in \calUb$ the set that satisfies $d_H(U_y, V_y) \le \epsilon$.  From the set using the right-total relation $\Psi\subseteq \bX\times \bY$, with $\Psi(y) = \{x \in U_y \mid d_X(x,y) \le \epsilon\}$, we obtain a $2\epsilon$-simplical carrier $\scrD_\Psi:\calR(\bY, \calV; r) \to \calR(\bX, \calU; r)$.

Now, note that the composite carrier $\scrD_\Psi \circ \scrC_\Omega$ need not carry the identity, because $y\in V_x$ does not imply $x\in U_y$.  However, $y\in V_x$ implies does imply that $V_y \subseteq V_x$ which combined with the Hausdorff distance bound implies there must exist some $x'\in V_y \cap \bX$ such that $d_X(x',y)\le \epsilon$, which implies $d_X(x',x) \le 2\epsilon$ by triangle inequality.  We can define a left-total relation $\Omega' \subseteq \bX\times \bX$, with $\Omega' = \{(x,x') \mid d(x,x') \le 2\epsilon, x'\in V_x\}$, which is nonempty, and $4\epsilon$-simplical by triangle inequality.  Furthermore, the carrier $\scrA_{\Omega'}$ contains the composite $\scrD_\Psi \circ \scrC_\Omega$ and carries the identity.  Similarly, we can define a relation $\Psi'\subset \bY \times \bY$ with $\Psi' = \{(y,y') \mid d_X(y,y') \le 2\epsilon\}$ which produces a $4\epsilon$-simplicial carrier $\scrB_{\Psi'}$ which contains the composite $\scrC_\Omega \circ \scrD_\Psi$ and carries the identity.

We can now apply \cref{prop:carrier_interleaving} to obtain the result.
\end{proof}

\begin{corollary}\label{cor:rips_cover_hausdorff_joint}
Let $\bX, \bY$ be samples from a metric space $(X, d_X)$, and let $\calW$ be a cover of $\bX \sqcup \bY$ such that $d_H(W|_\bX, W|_\bY) \le \epsilon$ for all $W \in \calWb$.  Then $\calR(\bX, \calW; r)$ and $\calR(\bY, \calW; r)$ are $2\epsilon$-interleaved. 
\end{corollary}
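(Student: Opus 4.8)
The plan is to reduce directly to \cref{cor:rips_cover_hausdorff} by restricting the joint cover to each sample. Write $\calU = \calW|_\bX = \{W\cap \bX \mid W\in \calW\}$ and $\calV = \calW|_\bY$; these are covers of $\bX$ and $\bY$ respectively. The first thing I would record is that restricting to a sample does not change the Vietoris--Rips cover complex: a simplex on vertices in $\bX$ is admitted by $\calW$ exactly when all its vertices lie in a common $W\in\calW$, equivalently in the common set $W\cap\bX\in\calU$, so $\calR(\bX,\calW;r) = \calR(\bX,\calU;r)$ and likewise $\calR(\bY,\calW;r)=\calR(\bY,\calV;r)$.

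Next I would check that forming the refinement commutes with restriction, i.e. $\calUb = \calWb|_\bX$ and $\calVb = \calWb|_\bY$. This is immediate from $(\bigcap_i W_i)\cap\bX = \bigcap_i (W_i\cap\bX)$, so every element of $\calUb$ has the form $W|_\bX$ for some $W\in\calWb$ and conversely. With this identification the hypothesis translates into precisely what \cref{cor:rips_cover_hausdorff} requires: given $U\in\calUb$, choose $W\in\calWb$ with $W|_\bX = U$ and set $V = W|_\bY\in\calVb$; then $d_H(U,V) = d_H(W|_\bX,W|_\bY)\le\epsilon$. Running the same argument with the roles of $\bX$ and $\bY$ exchanged supplies, for each $V\in\calVb$, a $U\in\calUb$ with $d_H(U,V)\le\epsilon$ — the symmetric comparison that is used in the second half of the proof of \cref{cor:rips_cover_hausdorff}. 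Applying that proposition then yields a $2\epsilon$-interleaving of $\calR(\bX,\calU;r)$ and $\calR(\bY,\calV;r)$, which the first step rewrites as a $2\epsilon$-interleaving of $\calR(\bX,\calW;r)$ and $\calR(\bY,\calW;r)$.

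There is essentially no deep obstacle here — the corollary is just the observation that a cover of $\bX\sqcup\bY$ packages both covers and the Hausdorff comparison simultaneously — but the one point meriting a line of care is empty restrictions: if $W|_\bX = \emptyset$ while $W|_\bY\neq\emptyset$ for some $W\in\calWb$, the hypothesis $d_H(W|_\bX,W|_\bY)\le\epsilon$ would force $\epsilon=\infty$, so under a finite bound the empty sets on the two sides correspond and may be discarded before invoking the proposition. With that bookkeeping dispatched, the proof is a direct citation of \cref{cor:rips_cover_hausdorff}.
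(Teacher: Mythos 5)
Your proposal is correct and takes the same route as the paper: the paper's proof is exactly the one-line reduction to \cref{cor:rips_cover_hausdorff} via $\calU = \{W\cap\bX \mid W\in\calW\}$ and $\calV = \{W\cap\bY \mid W\in\calW\}$. Your additional checks (that restriction commutes with refinement, that the cover complexes agree, and the remark on empty restrictions) are sound elaborations of details the paper leaves implicit.
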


\begin{proof}
We apply \cref{cor:rips_cover_hausdorff} taking $\calU = \{ W \cap \bX \mid W\in \calW\}$ and $\calV = \{W \cap \bY \mid W\in \calW\}$.
\end{proof}

\cref{cor:rips_cover_hausdorff_joint} specializes to the standard stability bound \cite{geometric_stab2014} when $\calU = \{\bX\}$.

\begin{comment}
The conditions of \cref{cor:rips_cover_hausdorff} imply that $\calN(\calU) \simeq \calN(\calV)$.
\end{comment}

\subsection{A Generalized Nerve Theorem}\label{sec:gen_nerve_theorem}

We'll now prove a version of the Nerve theorem for cover complexes.  This result can be viewed as a special case of of the approximate nerve theorems in \cite{ApproximateNerveTheorem2017, cavannaGeneralizedPersistentNerve2018}.  While our proof is narrower in scope than the aforementioned results, the use of carriers will considerably simplify the proof, compared to \cite{ApproximateNerveTheorem2017} which used the Mayer-Vietoris spectral sequence, and \cite{cavannaGeneralizedPersistentNerve2018} which used a construction using the blowup complex.

\begin{theorem}\label{thm:nerve_theorem}
(Nerve Theorem \cite{borsukNerve}) Let $\calU$ be a cover of a paracompact space $X$, where if $\cap U_i\ne \emptyset$, then $\cap U_i$ is contractible.  Then $\calN(\calU)\simeq X$.
\end{theorem}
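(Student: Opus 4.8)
The statement is the classical Borsuk nerve theorem, so one option is simply to cite \cite{borsukNerve} (or \cite{HatcherAT}); but since the rest of this section builds everything from carriers, I will sketch the route I would actually take. The plan is to use the Mayer--Vietoris blowup complex $\mathcal{B}$ as an intermediary: for each nonempty simplex $\sigma=(U_{i_0},\dots,U_{i_k})$ of $\calN(\calU)$ write $U_\sigma=\bigcap_j U_{i_j}$, and glue the pieces $U_\sigma\times\Delta^\sigma$ along the face maps of $\calN(\calU)$ together with the inclusions $U_\tau\hookrightarrow U_\sigma$ for $\sigma\subseteq\tau$. This carries two canonical projections, $p\colon\mathcal{B}\to X$ forgetting the simplex coordinate and $q\colon\mathcal{B}\to\calN(\calU)$ forgetting the space coordinate, and the goal is to show each is a homotopy equivalence, so that $X\simeq\mathcal{B}\simeq\calN(\calU)$.

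For $q$ I would argue that $\mathcal{B}$ is (a model for) the homotopy colimit of the diagram $\sigma\mapsto U_\sigma$ over the face poset of $\calN(\calU)$; since every $U_\sigma$ is contractible, the objectwise inclusion of the constant one-point diagram is an objectwise homotopy equivalence, hence induces a homotopy equivalence of homotopy colimits, and the homotopy colimit of the constant diagram is $|\calN(\calU)|=\calN(\calU)$. Concretely this amounts to observing that over the closed star of each simplex the pieces of $\mathcal{B}$ deformation retract onto a single contractible fibre, and then invoking the gluing lemma that a map which is a homotopy equivalence over each member of a numerable open cover of the base, compatibly on overlaps, is a homotopy equivalence. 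For $p$ the same gluing lemma is applied to the preimages $p^{-1}(U)$, $U\in\calU$: each deformation retracts onto the slice $U\times\{\mathrm{pt}\}\cong U$, and the cover $\{U\}$ is numerable because $X$ is paracompact, which is exactly where paracompactness enters (a subordinate partition of unity is what makes the retractions global).

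The main obstacle is the ``projection lemma'' half, i.e.\ that $p$ is a genuine homotopy equivalence rather than merely a homology or \v{C}ech-level equivalence; paracompactness (numerability of $\calU$) is essential here and cannot be dropped. I would also note that if only the homology statement $H_\ast(X)\cong H_\ast(\calN(\calU))$ is needed, it drops out of \cref{prop:carrier_interleaving} with trivial filtrations: contractibility of the $U_\sigma$ gives an acyclic carrier from the simplicial chains of $\calN(\calU)$ to the singular chains of $X$ (sending $\sigma$ to the singular chains supported in $U_\sigma$) and an acyclic carrier back (sending a singular simplex to the subcomplex of $\calN(\calU)$ spanned by the members of $\calU$ containing its image), with both composites dominated by carriers of the respective identities. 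The upgrade to the homotopy equivalence in the theorem as stated is classical, and for that step I would defer to \cite{borsukNerve}.
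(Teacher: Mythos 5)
The paper does not prove \cref{thm:nerve_theorem}; it states it and immediately points to \cite{HatcherAT} for a proof, so there is no in-text argument to compare against. Your blowup-complex sketch is essentially the proof in the cited reference (Hatcher's ``complexes of spaces'' argument, \S 4G): build the double mapping cylinder $\mathcal{B}$ over the face poset of $\calN(\calU)$, show the projection to the nerve is an equivalence because every $U_\sigma$ is contractible (objectwise equivalence of diagrams, hence equivalence of homotopy colimits), and show the projection to $X$ is an equivalence via the gluing lemma for numerable covers, which is where paracompactness is genuinely used. That is correct, and you are right to flag the projection-lemma step as the one that cannot be reduced to a purely formal or homological argument. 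Your closing observation — that the homology-level statement drops out of \cref{prop:carrier_interleaving} with trivial filtrations, using the carrier $\sigma\mapsto C_\ast(U_\sigma)$ one way and the carrier sending a singular simplex to the subcomplex of $\calN(\calU)$ spanned by the sets containing its image the other way — is precisely the viewpoint the paper then filters in \cref{thm:acyclic_nerve_theorem}, where those two carriers reappear (with $\calUb$ in place of $\calU$ and the shift $\alpha^{k+1}$ absorbing the failure of strict acyclicity). So your sketch is correct and, at the homological level, coincides with the machinery the paper actually develops around this theorem.
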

A proof can be found in \cite{HatcherAT}. 

\begin{theorem}\label{thm:acyclic_nerve_theorem}
[an $\alpha$-Acyclic Nerve Theorem] Let $\calU$ be a cover of a vertex set $X$, and let $\X^T(\calU)$ be a simplicial cover complex, with $T$ a strict order with initial object $0$.  If $\X^T(V)$ is $\alpha$-acyclic for every $V\in \calUb$, then $H_k(\calN(\calU))$ and $H_k(\X^T(\calU))$  are $(\alpha^{k+1}, \id)$-interleaved.
\end{theorem}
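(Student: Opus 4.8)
The plan is to build explicit filtered carriers between $\X^T(\calU)$ and the order complex $\Delta(\calUb)$ of the intersection poset $(\calUb,\subseteq)$, which will serve as a convenient combinatorial stand‑in for $\calN(\calU)$, and then feed these into the machinery of \cref{prop:carrier_interleaving}. First I would record that $H_\ast(\Delta(\calUb))\cong H_\ast(\calN(\calU))$: by \cref{prop:nerve_equivalence} it suffices to compare $\Delta(\calUb)$ with $\calN(\calUb)$, and here the (trivially filtered) carrier sending a chain $V_0\subsetneq\cdots\subsetneq V_m$ to the simplex $\langle V_0,\dots,V_m\rangle$ of $\calN(\calUb)$ (a single simplex, since $\bigcap_i V_i=V_0\ne\emptyset$) together with the carrier sending a simplex $(U_0,\dots,U_k)$ of $\calN(\calUb)$ to the full subcomplex of $\Delta(\calUb)$ on $\{\,\bigcap_{i\in J}U_i : \emptyset\ne J\subseteq\{0,\dots,k\}\,\}$ (a cone on the apex $\bigcap_i U_i$) are mutually inverse up to the cone‑shaped enlargements that \cref{prop:carrier_interleaving} allows; all carriers here are honestly acyclic, so this step contributes no shift.

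For the main comparison, for a simplex $\sigma$ of $\X^T(\calU)$ let $V_\sigma=\bigcap\{\,U\in\calUb : \sigma\subseteq U\,\}\in\calUb$ be the smallest element of $\calUb$ containing the vertex set of $\sigma$; it exists and is nonempty because $\sigma\in\X^T(\calU)$ and $\calUb$ is closed under nonempty intersection, and $\tau\preceq\sigma$ forces $V_\tau\subseteq V_\sigma$. I would let $\scrC:\X^T(\calU)\to\Delta(\calUb)$ send $\sigma$ to the full subcomplex of $\Delta(\calUb)$ on $\{V_\tau : \tau\preceq\sigma\}$; this is covariant and contractible since $V_\sigma$ is a cone apex, so (the target being trivially filtered) $\scrC$ is an honestly acyclic carrier. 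I would let $\scrD:\Delta(\calUb)\to\X^T(\calU)$ send a chain $V_0\subsetneq\cdots\subsetneq V_m$ to $\X^T(V_m)$; this is covariant, lands in $\X^T(\calU)$ since $V_m\in\calUb$, and its carried subcomplexes are $\alpha$‑acyclic by hypothesis. The composite $\scrD\circ\scrC$ collapses to $\sigma\mapsto\X^T(V_\sigma)$, which is $\alpha$‑acyclic and carries the identity of $\X^T(\calU)$ because $\sigma\subseteq V_\sigma$, while $\scrC\circ\scrD$ of a chain with maximum $V_m$ sits inside the full subcomplex of $\Delta(\calUb)$ on $\{V_\sigma : \sigma\in\X^T(V_m)\}\cup\{V_0,\dots,V_m\}$, which is a cone on $V_m$, hence contractible and identity‑carrying.

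With these carriers in hand I would run the argument of \cref{prop:carrier_interleaving} while tracking the accumulation of shifts governed by \cref{thm:filtered_acyclic_carrier}. Extending $\scrC$ via \cref{prop:filtered_aug_preserving_exists} and \cref{thm:filtered_acyclic_carrier} costs nothing, producing $F^{\id}:C_\ast(\X^T(\calU))\to C_\ast(\Delta(\calUb))$; extending $\scrD$ — whose carried subcomplexes are $\alpha$‑acyclic — up to degree $k$ produces $G^{\alpha^{k}}$. Then $G^{\alpha^{k}}\circ F^{\id}$ and the $\alpha^k$‑shifted identity are both augmentation‑preserving and carried by $\scrD\circ\scrC=(\sigma\mapsto\X^T(V_\sigma))$, so \cref{prop:pointwise_htpy} makes them $\alpha$‑chain‑homotopic, the homotopy in degrees $\le k$ landing at parameter $\alpha\circ\alpha^k=\alpha^{k+1}$; the reverse composite $F^{\id}\circ G^{\alpha^{k}}$ is similarly homotopic to $\id_{\Delta(\calUb)}$ with no further shift. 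Bumping $G$ up to $G^{\alpha^{k+1}}$, these data constitute an $(\alpha^{k+1},\id)$‑interleaving of $H_k(\X^T(\calU))$ and $H_k(\Delta(\calUb))$, and composing with the shift‑free isomorphism $H_k(\Delta(\calUb))\cong H_k(\calN(\calU))$ yields the theorem. The hard part will be exactly this bookkeeping: pinning down that the one extra power of $\alpha$ beyond $\alpha^k$ arises solely from the degree‑raising chain homotopy and from no other source, and checking that the composite‑carrier hypotheses of \cref{prop:carrier_interleaving} hold on the nose — in particular that the enlargements of $\scrC\circ\scrD$ remain contractible, which uses that each $\X^T(U)$ has $U$ as its $0$‑skeleton so that the required cone vertices are actually present (and, as a minor point, that the intersections defining $V_\sigma$ and $\calUb$ are handled correctly when $\calU$ is infinite).
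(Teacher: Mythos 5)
Your proposal is correct and follows essentially the same strategy as the paper's proof: the carrier from the cover complex to the (refined) nerve is the cone generated by the smallest set of $\calUb$ containing each simplex, the carrier back sends a nerve simplex to the cover complex of its maximal set, the composites are shown to be identity-carrying cones respectively $\alpha$-acyclic subcomplexes $\X^T(V_\sigma)$, and the $\alpha^{k+1}$ arises exactly as you say — $\alpha^k$ from extending the map through the $\alpha$-acyclic carrier up to degree $k$ via \cref{thm:filtered_acyclic_carrier}, plus one more $\alpha$ from the chain homotopy to the shifted identity. The one substantive deviation is that you route through the order complex $\Delta(\calUb)$ of the inclusion poset rather than $\calN(\calUb)$ itself; this costs an extra (shift-free) comparison step beyond \cref{prop:nerve_equivalence}, but it buys something real: on a chain $V_0\subsetneq\cdots\subsetneq V_m$ the maximal element is automatic, whereas the paper's carrier $\scrD(V_0,\dots,V_k)=\X^T(V_{i_k})$ on $\calN(\calUb)$ presumes a maximal (and, in \cref{prop:nerve_equivalence}, a minimal) set inside an arbitrary simplex of $\calN(\calUb)$, which is a family with nonempty common intersection and need not be nested. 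Your version therefore closes a small gap in the paper's construction at the price of slightly more bookkeeping.
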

\begin{proof}
We'll construct an interleaving with $\calN(\calUb)$, which has isomorphic homology to $\calN(\calU)$ by \cref{prop:nerve_equivalence}.

We'll first define a carrier $\scrD:\calN(\calUb)\to \X^T$.  We take $\scrD(V) = \X^T(V)$, and $\scrD(V_0,\dots,V_k) = \X^T(V_0\cup\dots\cup V_k) = \X^T(V_{i_k})$, where $V_{i_k}$ is the maximal set in $\{V_0,\dots,V_k\}$.  This forms a $(0,\alpha)$-acylic carrier by assumption, where $0$ denotes the map to the initial object of $T$.

Now, we define a carrier $\scrC:\X^T(\calU)\to \calN(\calUb)$.  We take
\begin{equation}\label{eq:nerve_carrier_C}
\scrC(x_0,\dots,x_k) = \bigg\langle \bigg\{\bigcap_{V \supseteq S} V \bigg\}_{S\subseteq \{x_0,\dots,x_k\}}\bigg\rangle
\end{equation}
Let $V' = \bigcap_{V\supseteq \{x_0,\dots,x_k\}} V$.
The carrier above forms a cone with $V'$, so is acyclic.

$\scrD\circ \scrC$ carries the identity because \cref{eq:nerve_carrier_C} ensures that some $V'$ for which $\{x_0,\dots,x_k\}\subseteq V'$ is included in $\scrC(x_0,\dots,x_k)$, and $\scrD(V) \ni (x_0,\dots,x_k)$ for that $V$.  Because all other sets in \cref{eq:nerve_carrier_C} are contained in $V'$, $\scrD\circ \scrC(x_0,\dots,x_k) = \scrD(V')$, which is $(0,\alpha)$-acyclic by assumption.

Any $(x_0,\dots,x_k)\in \scrD(V)$, implies $\{x_0,\dots,x_k\}\subseteq V$.  Thus, every $V_i$ generating the carrier in \cref{eq:nerve_carrier_C} satisfies $V_i \subseteq \scrC(x_0,\dots,x_k)$.  We can define $\scrA(V)$ to be the star of $V$ inside $\calN(\calUb)$.  This carrier is acyclic because it forms a cone with the vertex for $V$, and contains $\scrC\circ \scrD(V)$.  For $(V_0,\dots,V_k)\in \calN(\calUb)$, we take $\scrA(V_0,\dots,V_k) = \scrA(V_{i_k})$, where $V_{i_k}$ is the maximal set in the simplex.  Again, this carrier is acyclic and carries the identity. 

We have now constructed carriers for maps in the following diagram
\begin{equation}
\begin{tikzcd}
\X^T(\calU) \ar[r, hookrightarrow]\ar[d,"\scrC"] &\X^T(\calU)\ar[d,"\scrC"]\\
\calN(\calUb) \ar[r,"\scrA"] \ar[ru,"\scrD"] &\calN(\calUb)
\end{tikzcd}
\end{equation}
We can now construct a map $P_\ast :C_\ast(\X^T(\calU))\to C_\ast(\calN(\calUb))$ carried by $\scrC$ by applying \cref{prop:filtered_aug_preserving_exists}. 
We can also construct maps $F_i^{\alpha^i}: C_i(\calN(\calUb))\to C_i(\X^{\alpha^{i}(0)})$ using \cref{thm:filtered_acyclic_carrier}, where $\partial_i F_i^{\alpha^i} x = F_{i-1}^{\alpha^{i-1}} \partial_i x$, which we need to construct for $i=0,\dots,k+1$.  Because $\scrD\circ \scrC$ carries the inclusion, we can construct a homotopy, but only after increasing the grade by an extra factor of $\alpha$ in each dimension $i$, $I^{\alpha} F_i^{\alpha^i}\circ P_i \simeq I^{\alpha^{i+1}}_i$.  In order to compute induced maps on homology for $H_i$, we only need to extend the chain homotopy up to dimension $i$.  On homology, we have $\tilde{I}^\alpha \tilde{F}^{\alpha^{k}}_k {\tilde{P}_k} \cong \tilde{I}^{\alpha^{k+1}}_k$.

Finally, because $\scrA$ is acyclic and carries $P_\ast \circ F_\ast$ as well as the inclusion, we have $\tilde{P}_k \circ I^{\alpha} \tilde{F}^{\alpha^k} \simeq I$, we have constructed a $(\alpha^{k+1},\id)$-interleaving.
\end{proof}

Note that for Vietoris-Rips cover complexes as well as other geometric complexes, that there will be some parameter $t\in T$ at which $\X^T(V)$ will be acyclic for all $V$, when $\X^T(V)$ forms the maximal simplex on its vertex set.  At this point, the cover complex and nerve are homotopic by the standard nerve theorem (\cref{thm:nerve_theorem}).

\begin{corollary}\label{cor:acyclic_nerve_cor}
Let $\calU$ be a cover of $X$, where $\X^T(\calU)$ satisfies the conditions of \cref{thm:acyclic_nerve_theorem}.  Then if $\calN(\calU)$ is acyclic, $H_k(\X^T(\calU))$ is $(\alpha^{k+1})$-acyclic.
\end{corollary}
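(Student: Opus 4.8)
The plan is to read the conclusion directly off the interleaving supplied by \cref{thm:acyclic_nerve_theorem}, and then translate it back into the language of \cref{def:alpha_acyclic_complex}. First, I would observe that the hypotheses of \cref{thm:acyclic_nerve_theorem} hold by assumption, so for each fixed $k$ there is an $(\alpha^{k+1},\id)$-interleaving between $H_k(\calN(\calU))$ and $H_k(\X^T(\calU))$. Unwinding \cref{def:alpha_beta_interleaving}, this gives a shift map $F^{\alpha^{k+1}}\colon H_k(\calN(\calU))\to H_k(\X^T(\calU))$ together with a shift map $P\colon H_k(\X^T(\calU))\to H_k(\calN(\calU))$ --- a $\id$-shift, since $\calN(\calU)$ carries the trivial filtration --- such that the composite $F^{\alpha^{k+1}}\circ P$ is isomorphic to $I^{\alpha^{k+1}}$ on $H_k(\X^T(\calU))$, i.e.\ it agrees with the internal structure maps $F_q^{t,\alpha^{k+1}(t)}\colon H_k(\X^t)\to H_k(\X^{\alpha^{k+1}(t)})$ of the persistence module $H_k(\X^T(\calU))$.

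Next I would invoke the hypothesis that $\calN(\calU)$ is acyclic. Since the chain maps $P_\ast$ and $F_\ast$ constructed in the proof of \cref{thm:acyclic_nerve_theorem} are augmentation-preserving (via \cref{prop:filtered_aug_preserving_exists}), the interleaving descends to reduced homology, where $\tilde H_k(\calN(\calU)) = 0$. Consequently, for every $t\in T$ the structure map $F_q^{t,\alpha^{k+1}(t)}$ factors through the zero vector space $\tilde H_k(\calN(\calU))$ via $P$ and $F^{\alpha^{k+1}}$, hence vanishes in reduced homology; for $k\ge 1$ this is the ordinary structure map.

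Finally I would translate this back: if $F_q^{t,\alpha^{k+1}(t)} = 0$ then every cycle $z$ in the degree-$k$ chains of $\X^t(\calU)$ represents a class mapping to $0$ in $\tilde H_k(\X^{\alpha^{k+1}(t)}(\calU))$, so $z$ bounds in $C_k^{\alpha^{k+1}(t)}$; equivalently, every bar of $H_k(\X^T(\calU))$ born at $t$ has died by $\alpha^{k+1}(t)$. By \cref{def:alpha_acyclic_complex} this is exactly the statement that $H_k(\X^T(\calU))$ is $(\alpha^{k+1})$-acyclic.

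The proof is short once \cref{thm:acyclic_nerve_theorem} is in hand; the one place to be careful is the bookkeeping of \emph{which} composite of the interleaving is being used --- it must be $F^{\alpha^{k+1}}\circ P$, landing back in $H_k(\X^T(\calU))$ and equal to the internal structure map, rather than $P\circ F^{\alpha^{k+1}}$ on the constant module $H_k(\calN(\calU))$ --- so that its factoring through $0$ yields precisely the $(\alpha^{k+1})$-acyclicity condition rather than something weaker. The secondary subtlety is the reduced-versus-unreduced homology bookkeeping in degree $k=0$, which is handled by the augmentation-preserving property of the interleaving maps.
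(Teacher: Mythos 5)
Your proposal is correct and takes essentially the same approach as the paper, whose proof is the one-line observation that acyclicity of $\calN(\calU)$ plus the $(\alpha^{k+1},\id)$-interleaving of \cref{thm:acyclic_nerve_theorem} forces the structure maps $H_k(\X^t(\calU))\to H_k(\X^{\alpha^{k+1}(t)}(\calU))$ to vanish; you have simply spelled out the details the paper leaves implicit, including correctly identifying the composite $F^{\alpha^{k+1}}\circ P$ as the one that must factor through zero.
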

\begin{proof}
This follows because if $\calN(\calU)$ is acyclic, then the interleaving implies that $\X^T(\calU)$ is $\alpha^{k+1}$-acyclic.
\end{proof}

\section{Rips-Cover Constructions}\label{sec:application_to_rips}

We now focus on Vietoris-Rips cover complexes, which we denote as $\calR(\bX, \calU; r)$. We seek to answer the following questions:
\begin{enumerate}
    \item For a fixed cover $\calU$, how sensitive is $\calR(\bX, \calU; r)$ to perturbations of the underlying data $X$?
    \item For a fixed dataset $\bX$, how sensitive is $\calR(\bX, \calU; r)$ to the choice of cover $\calU$?
    \item How does $\calR(\bX, \calU; r)$ relate to the full Vietoris-Rips complex $\calR(\bX;r)$?
\end{enumerate}

A related definition is the {\em Rips system} found in Yoon's 2018 dissertation \cite{yoon2018} which is used for distributed computation of persistent homology of Rips complexes via cellular (co)-sheaves.  Yoon shows that if the Nerve is 1-dimensional, and the system covers the full Rips complex, that the Rips system can be used to obtain the Homology of the full complex, and develops a distributed algorithm for computation.  We will consider more general coverings, and characterize regimes where the cover complex and full complex are interleaved, but not identical.  Distribution schemes for computing persistent homology of cover complexes in their full generality are beyond the scope of this work.

\subsection{Interleavings for Arbitrary Covers}\label{sec:cover_full_interleaving}

We now turn to relating the persistent homology of $\calR(\bX, \calU; r)$ to the persistent homology of $\calR(\bX; r)$.  At large $r$ parameters, Vietoris-Rips complexes become acyclic, so following \cref{thm:acyclic_nerve_theorem} that $PH_\ast(\calR(\bX, \calU; r))$ will eventually converge to $H_\ast(\calN(\calU))$.  This means that unless $\calN(\calU)$ is acyclic, $PH_\ast(\calR(\bX, \calU; r)$ and $PH_\ast(\calR(\bX; r))$ can not possibly interleave for sufficiently large $r$ parameters.  However, in situations where sets in the cover have non-trivial structure, we would like to understand how this structure relates to the full filtration $\calR(\bX; r)$, particularly for small values of $r$.  

Because there are inclusions $\calR(\bX, \calU; r) \hookrightarrow \calR(\bX; r)$, it suffices to study under what conditions we can extend a map $f^\alpha$ in the diagram
\begin{equation}\label{eq:cover_interleaving}
\begin{tikzcd}
\calR(\bX, \calU; r) \ar[r, hookrightarrow] \ar[d, hookrightarrow]& \calR(\bX, \calU; \alpha(r))\ar[d, hookrightarrow]\\
\calR(\bX; r) \ar[r, hookrightarrow]\ar[ur, "f^\alpha"] & \calR(\bX; \alpha(r))
\end{tikzcd}
\end{equation}

We focus on a carrier  $\scrC: \calR(\bX; r) \to \calR(\bX, \calU; r)$ generated from witness sets
\begin{equation}
\bX(x_0,\dots,x_k) = \{ y\in \bX \mid d(y,x_i) \le d(x_0,\dots,x_k)\ \forall i=0,\dots,k\}
\end{equation}
and their union, denoted
\begin{equation}
\bar{\bX}(x_0,\dots,x_k) =\bigcup_{S \in \calP(\{x_0,\dots,x_k\})}\bX(S)
\end{equation}
where $\calP$ denotes the power set.  We define the carrier $\scrC: \calR(\bX; r) \to \calR(\bX, \calU; r)$ via
\begin{equation}\label{eq:rips_cover_carrier}
    \scrC: (x_0,\dots,x_k) \mapsto \langle\bar{\bX}(x_0,\dots,x_k) \rangle
\end{equation}
and let
\begin{equation}
\calUb(x_0,\dots,x_k) = \{V \cap \bar{\bX}(x_0,\dots,x_k)  \mid  V\in \calUb, \bar{\bX}(x_0,\dots,x_k) \cap V \ne \emptyset \}
\end{equation}
which covers $\scrC(x_0,\dots,x_k)$.

\begin{definition}\label{def:rips_inter_thres}
We define three thresholds: $R_1\le R_2 \le R_3$ which describe different regimes of the non-decreasing map $\alpha$.
\begin{enumerate}
    \item Let $R_1$ be the largest value so that if $d(x_0,\dots,x_k)\le R_1$ then there exists some $U\in \calU$ so that $x_0,\dots,x_k\in U$.
    \item Let $R_2$ be the largest value so that if $d(x_0,\dots,x_k) \le R_2$ then $\calN(\calUb(x_0,\dots,x_k))$ is acyclic and $\bX(x_0,\dots,x_k)\cap V)$ is non-empty for each $V\in \calUb(x_0,\dots,x_k)$.
    \item Let $R_3$ be the largest value so that if $d(x_0,\dots,x_k) \le R_3$ then $\calN(\calUb(x_0,\dots,x_k))$ is acyclic.
\end{enumerate}
\end{definition}

If we impose a mild condition that for any points $x,y\in \bX$ with $d(x,y) = 0$ then $U\cap \{x, y\}$ is either $\{x,y\}$ or empty for all $U\in \calU$, then $0\le R_1$.

\begin{theorem}\label{thm:rips_cover_interleaving}
$H_k(\calR(\bX, \calU; r))$ and $H_k(\calR(\bX; r))$ are $(\id,\alpha)$-interleaved, where $\alpha(r) = r$ for $r \le R_1$, $\alpha(r) \le 2r$ for $r\le R_2$ and $\alpha(r) \le 3r$ for $r\le R_3$.  For $r > R_3$, an interleaving may not exist.
\end{theorem}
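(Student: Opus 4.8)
The plan is to construct the desired $(\id,\alpha)$-interleaving from the inclusion $\iota:\calR(\bX,\calU;r)\hookrightarrow\calR(\bX;r)$, which is an $\id$-acyclic simplicial carrier, paired with the witness carrier $\scrC:\calR(\bX;r)\to\calR(\bX,\calU;r)$ of \cref{eq:rips_cover_carrier}, via \cref{prop:carrier_interleaving}. The whole argument reduces to controlling, regime by regime, the acyclicity of $\scrC(\sigma)=\langle\bar\bX(\sigma)\rangle$ for a simplex $\sigma=(x_0,\dots,x_k)$ together with the two composite carriers, and this rests on two triangle-inequality observations about $\sigma$ with $d(\sigma)=r$: first, every point of $\bar\bX(\sigma)$ lies within $2r$ of $x_0$, and every pair of points of $\bar\bX(\sigma)$ is within $3r$; second, when $r\le R_2$ each $V\in\calUb(\sigma)$ contains a witness $w\in\bX(\sigma)\cap V$ and every point of $V\cap\bar\bX(\sigma)$ lies within $2r$ of $w$.

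For $r\le R_1$ the statement is immediate: by definition of $R_1$ every simplex of $\calR(\bX;r)$ with filtration value at most $R_1$ lies in some $U\in\calU$, so $\calR(\bX,\calU;r)=\calR(\bX;r)$ on this range and the identity is a $(\id,\id)$-interleaving. For $r\le R_3$ I would analyze $\scrC(\sigma)$ as a cover complex over $\calUb(\sigma)$. By the second observation, when $r\le R_2$ each local piece $\langle V\cap\bar\bX(\sigma)\rangle$ becomes a cone on its witness once the parameter reaches $2r$ and stays so, hence is contractible from that fixed value on; when only $r\le R_3$, the first observation shows each local piece becomes the full simplex on its (single-cover-set) vertex support once the parameter reaches $3r$. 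In either case the local complexes are $\rho$-acyclic for the \emph{idempotent} shift $\rho(t)=\max(t,2r)$ (resp.\ $\max(t,3r)$), and $\calN(\calUb(\sigma))$ is acyclic by the definitions of $R_2$, $R_3$; \cref{cor:acyclic_nerve_cor} then gives that $\scrC(\sigma)$ is $\rho^{k+1}$-acyclic, and since $\rho$ is idempotent $\rho^{k+1}=\rho$, so $\scrC(\sigma)$ is genuinely acyclic from parameter $2r$ (resp.\ $3r$) onward. Thus $\scrC$ is $\alpha$-acyclic in the strong $(\alpha,\id)$ sense with $\alpha(r)=2r$ on $[0,R_2]$ and $\alpha(r)=3r$ on $[0,R_3]$ — crucially with no $\beta^{k}$-type blow-up when the filtered acyclic carrier theorem is invoked.

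To close the hypotheses of \cref{prop:carrier_interleaving} I would take $\scrA(\sigma)=\langle\bar\bX(\sigma)\rangle$ inside $\calR(\bX,\calU;r)$: it contains the composite of $\scrC$ with $\iota$, it contains $\sigma$ itself (already a simplex of the cover complex), and it is acyclic from $2r$ (resp.\ $3r$) by the argument just given. For the other composite I would take $\scrB(\sigma)=\calR(\bar\bX(\sigma);\,\cdot\,)$, the \emph{full} Rips complex on $\bar\bX(\sigma)$ — this enlargement is forced, since $\sigma$ itself need not lie in $\calR(\bX,\calU;r)$ and so is not contained in $\langle\bar\bX(\sigma)\rangle$ viewed in the cover complex; by the first observation $\calR(\bar\bX(\sigma);\,\cdot\,)$ is a cone on $x_0$ from parameter $2r$ and hence acyclic, and it evidently contains $\sigma$ and the composite of $\iota$ with $\scrC$. \cref{prop:carrier_interleaving} then yields the $(\id,\alpha)$-interleaving of $H_q$ with $\alpha$ satisfying the three stated bounds on their respective ranges.

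For $r>R_3$ one argues that no such bound can hold in general: beyond $R_3$ the nerves $\calN(\calUb(\cdot))$ need no longer be acyclic and the carrier construction breaks, and concretely, by \cref{thm:acyclic_nerve_theorem} and \cref{thm:nerve_theorem} the cover complex $\calR(\bX,\calU;r)$ converges to $\calN(\calU)$ as $r\to\infty$ while $\calR(\bX;r)$ eventually becomes a contractible simplex on $\bX$; choosing $\calU$ with $H_k(\calN(\calU))\ne 0$ produces classes in $H_k(\calR(\bX,\calU;r))$ that persist forever but can never be matched in $H_k(\calR(\bX;r))=0$, so no $(\id,\alpha)$-interleaving exists for any $\alpha$. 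I expect the main obstacle to be the bookkeeping that ties local contractibility to global acyclicity through the generalized nerve theorem while keeping the shift from iterating — i.e.\ noticing that local contractibility happens at a fixed parameter so that $\rho^{k+1}$ collapses — together with spotting that the enlargement $\scrB$ must be the full Rips complex on $\bar\bX(\sigma)$ rather than its restriction to the cover.
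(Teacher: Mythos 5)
Your proposal is correct and follows essentially the same route as the paper: the regime-by-regime analysis via the witness carrier $\scrC$ of \cref{eq:rips_cover_carrier}, the cone/triangle-inequality arguments corresponding to \cref{lem:restricted_cover_2r} and \cref{lem:restricted_cover_3r}, the enlarged carrier for the composite with the inclusion (your $\scrB$, the paper's $\scrD$) given by the full Rips complex on $\bar\bX(\sigma)$, and the convergence-to-the-nerve argument for why no interleaving need exist beyond $R_3$. Your explicit observation that the relevant shift $\rho(t)=\max(t,2r)$ is idempotent, so the $\rho^{k+1}$ arising from \cref{thm:acyclic_nerve_theorem} collapses to a single shift, makes precise a step the paper's proofs of \cref{prop:cover_rips_inter_2r} and \cref{prop:cover_rips_inter_3r} leave implicit.
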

Proofs of each inequality are in \cref{prop:cover_rips_identical}, \cref{prop:cover_rips_inter_2r}, and \cref{prop:cover_rips_inter_3r}.

\begin{proposition}\label{prop:cover_rips_identical}
$\calR(\bX, \calU; r) = \calR(\bX; r)$ for all $r\le R_1$.
\end{proposition}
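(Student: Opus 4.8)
The plan is to observe that this proposition is essentially an unpacking of the definition of $R_1$ in \cref{def:rips_inter_thres} together with the definition of the cover complex. One inclusion is immediate and holds at every parameter: by \cref{def:cover_complex2}, $\calR(\bX, \calU; r)$ is obtained from $\calR(\bX; r)$ by restricting to those cells whose $0$-skeleton lies inside some single $U \in \calU$, so $\calR(\bX, \calU; r) \subseteq \calR(\bX; r)$, and each simplex surviving the restriction keeps the same filtration parameter. Thus it suffices to prove the reverse inclusion for $r \le R_1$, after which equality of the two complexes — and of the filtration values simplex-by-simplex — follows.

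For the reverse inclusion, I would take an arbitrary simplex $\sigma = (x_0,\dots,x_k) \in \calR(\bX; r)$. Since the Rips filtration is a flag filtration, $\sigma$ is present exactly when $d(x_0,\dots,x_k) \le r$, and by hypothesis $r \le R_1$. The defining property of $R_1$ then yields some $U \in \calU$ containing all of $x_0,\dots,x_k$, so $\sigma$ has its $0$-skeleton in a single cover set and hence lies in $\calR(\bX, \calU; r)$ at the same parameter. Combining this with the trivial inclusion gives $\calR(\bX, \calU; r) = \calR(\bX; r)$ for every $r \le R_1$, so the two filtered complexes agree up through parameter $R_1$.

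There is no genuine obstacle in this argument; the only point worth flagging is that the statement is vacuous unless $R_1 \ge 0$, which is exactly what the mild condition preceding the theorem (points at distance $0$ are never split by a set of $\calU$) guarantees. I would therefore either invoke that condition or simply note that when $R_1 < 0$ there is nothing to prove.
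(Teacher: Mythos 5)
Your proof is correct and follows essentially the same route as the paper: the inclusion $\calR(\bX, \calU; r) \subseteq \calR(\bX; r)$ is immediate from the definition of the cover complex, and the reverse inclusion for $r \le R_1$ is exactly the defining property of $R_1$ applied to each simplex. The extra remark about $R_1 \ge 0$ is harmless but not needed, since the paper already notes that the mild non-splitting condition guarantees $0 \le R_1$.
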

\begin{proof}
This follows because if $(x_0,\dots,x_k)\in \calR(\bX; r)$, then $d(x_0,\dots,x_k)\le r \le R_1$, so $(x_0,\dots,x_k)\in \calR(\bX, U; r)\subseteq \calR(\bX, \calU; r)$ for some $U\in \calU$.  Thus $\calR(\bX;r) \subseteq \calR(\bX, \calU; r)$, and we already know $\calR(\bX, \calU; r) \subseteq \calR(\bX; r)$, giving equality.
\end{proof}

This mean that covers $\calU$ that encode some notion of locality produce cover complexes which are identical to the full Rips complex at the beginning of the filtration. 


We now turn to the non-trivial interleavings. Let $\iota:\calR(\bX, \calU; r) \to \calR(\bX; r)$ denote the canonical inclusion, seen in \cref{eq:cover_interleaving}.  Clearly, $\scrC \circ \iota$ carries the inclusion $\calR(\bX, \calU; r) \to \calR(\bX, \calU; \alpha(r))$.
However, the carrier $\iota \circ \scrC$ does not carry the inclusion for any simplices in $\calR(\bX; r)$ that are not in the cover complex $\calR(\bX, \calU; r)$.  We need to find another carrier which does carry the inclusion which also contains this carrier.  Consider $\calD: R(\bX;r) \to \calR(\bX; r)$, defined as
\begin{equation}\label{eq:rips_cover_carrier2}
    \scrD: (x_0,\dots,x_k) \mapsto \langle\bar{\bX}(x_0,\dots,x_k) \rangle.
\end{equation}
The difference between $\scrC$ and $\scrD$, despite the similarity of their definitions is that they map to different complexes. $\scrC$ maps to subcomplexes of $\calR(\bX, \calU; r)$, and $\scrD$ maps to subcomplexes of $\calR(\bX; r)$.  Note that $\scrD$ does carry $\iota \circ \scrC$.

If $\scrD$ is also $\alpha$-acyclic, we can apply \cref{prop:carrier_interleaving} to construct the interleaving.  The remainder of this section describes conditions that will allow us to bound the non-decreasing map $\alpha$.

\begin{lemma}
If $(\bX, d)$ is a metric space, then $\calD$ is acyclic for $\alpha: r\mapsto 2r$.
\end{lemma}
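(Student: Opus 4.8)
The plan is to show that each carried subcomplex $\scrD(x_0,\dots,x_k)$ becomes a simplicial cone — hence contractible, hence acyclic — as soon as the filtration parameter reaches twice the birth value of the simplex. Fix a simplex $\sigma = (x_0,\dots,x_k)$ of $\calR(\bX; r)$; it appears at parameter $s = d(x_0,\dots,x_k)$, and what must be verified is that $\scrD(\sigma) = \langle\bar{\bX}(x_0,\dots,x_k)\rangle$ is acyclic after $t = \alpha(s) = 2s$, in the sense of \cref{def:alpha_beta_acyclic_carrier} with $\beta = \id$; that is, the induced subcomplex of $\calR(\bX; t)$ on the vertex set $\bar{\bX}(x_0,\dots,x_k)$ should have vanishing reduced homology for every $t \ge 2s$.

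First I would record a distance estimate, which is the only place the metric hypothesis is used. Every vertex $y\in\bar{\bX}(x_0,\dots,x_k)$ lies in $\bX(S)$ for some nonempty $S\subseteq\{x_0,\dots,x_k\}$, so $d(y, x_i)\le d(S)\le d(x_0,\dots,x_k) = s$ for each $x_i\in S$. The basepoint $x_0$ itself lies in $\bX(x_0,\dots,x_k)\subseteq\bar{\bX}(x_0,\dots,x_k)$, and by the triangle inequality, choosing any $x_i\in S$,
\[ d(y, x_0)\le d(y, x_i) + d(x_i, x_0)\le s + s = 2s . \]
Thus $x_0$ is within distance $2s$ of every vertex of $\scrD(\sigma)$.

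Then I would invoke the flag (clique) property of Vietoris–Rips complexes. For any $t\ge 2s$, let $\tau$ be a simplex of $\scrD(\sigma)$ present at parameter $t$: all its vertices lie in $\bar{\bX}(x_0,\dots,x_k)$ and have pairwise distances at most $t$. Adjoining $x_0$, the cone $x_0 \ast \tau$ has all pairwise vertex distances at most $\max(t, 2s) = t$, so $x_0 \ast \tau$ is a simplex of $\calR(\bX; t)$ on the vertex set $\bar{\bX}(x_0,\dots,x_k)$, i.e. a simplex of $\scrD(\sigma)$ at parameter $t$. Hence $\scrD(\sigma)$ restricted to parameter $t$ is a simplicial cone with apex $x_0$, so it is contractible and its reduced homology vanishes. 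Since this holds for every $t\ge 2s$, $\scrD$ is $\alpha$-acyclic with $\alpha(r) = 2r$, and (together with the fact, already noted in the text, that $\scrD$ carries $\iota\circ\scrC$) it can be fed into \cref{prop:carrier_interleaving}.

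There is no genuinely hard step here; the only points requiring care are the degenerate members of the union defining $\bar{\bX}$ (for a singleton $S = \{x_i\}$ one has $\bX(\{x_i\}) = \{x_i\}$ in a metric space, so these contribute nothing new to the vertex set), and the bookkeeping that the cone inequality must use $2s\le t$ rather than $s\le t$, since a simplex $\tau$ already present at parameter $t$ may itself have diameter as large as $t$. The reliance on a metric rather than a mere dissimilarity enters only through the triangle inequality in the displayed estimate, which is precisely where a general dissimilarity would force a larger $\alpha$.
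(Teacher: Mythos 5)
Your proof is correct and follows essentially the same route as the paper's: the triangle inequality shows every vertex of $\bar{\bX}(x_0,\dots,x_k)$ is within $2r$ of $x_0$, and the flag property of the Vietoris--Rips complex then makes $\scrD(x_0,\dots,x_k)$ a cone with apex $x_0$ at parameter $2r$, hence acyclic. Your extra bookkeeping (coning a simplex already of diameter $t\ge 2s$, and the degenerate singleton witness sets) is a welcome but inessential refinement of the same argument.
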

\begin{proof}
Consider $\calD(x_0,\dots,x_k)$, and let $r = d_X(x_0,\dots,x_k)$.  Without loss of generality, consider distances to $x_0$.  Let $y\in \calD(x_0,\dots,x_k)$.  By definition of $\scrD$, either $d(y,x_0)\le r$, or $d(y,x_i) \le r$ for some $x_i \in \{x_1,\dots,x_k\}$.  Because $d_X(x_0,x_i)\le r$, by triangle inequality, $d(y,x_0)\le 2r$.  Because the Vietoris-Rips complex is a flag complex, this implies  $\calD(x_0,\dots,x_k)$ forms a cone with $x_0$ at parameter $2r$ and so is acyclic.
\end{proof}

The more difficult carrier to analyze is $\scrC$.  We'll consider the restriction of the cover to the carrier.    If $\calN(\calUb(x_0,\dots,x_k))$ is acyclic for each $(x_0,\dots,x_k)\in \calR(\bX; r)$, and $\calR(\bar{\bX}(x_0,\dots,x_k), V; r)$ is $\alpha$-contractible, then $\scrC$ is $\alpha$-acyclic by the Nerve theorem.

\begin{lemma}\label{lem:restricted_cover_3r}
Let $r = d_X(x_0,\dots,x_k)$.  For each $V\in \calUb(x_0,\dots,x_k)$, $\calR(V; 3r)$ is contractible.
\end{lemma}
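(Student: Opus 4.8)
The plan is to show that the subset $V \subseteq \bX$ has diameter at most $3r$ in $(\bX, d)$. Since Vietoris-Rips complexes are flag complexes, $\calR(V; 3r)$ is then the full simplex on the vertex set $V$, which is contractible; here $V$ is nonempty because membership in $\calUb(x_0,\dots,x_k)$ requires $\bar{\bX}(x_0,\dots,x_k)\cap V \ne \emptyset$. Equivalently, fixing any $v\in V$ and noting $d(v,w)\le 3r$ for all $w\in V$ exhibits $\calR(V;3r)$ as a cone with apex $v$, in the style of the other acyclicity arguments in this paper.

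First I would record the structural fact that $V = \bar V \cap \bar{\bX}(x_0,\dots,x_k)$ for some $\bar V\in \calUb$, so in particular $V \subseteq \bar{\bX}(x_0,\dots,x_k) = \bigcup_{S}\bX(S)$, the union being over the nonempty $S\subseteq\{x_0,\dots,x_k\}$ (so that $\bar{\bX}$ is not all of $\bX$). Hence each $v\in V$ lies in $\bX(S_v)$ for some nonempty $S_v\subseteq\{x_0,\dots,x_k\}$, and choosing any index $i$ with $x_i\in S_v$ gives $d(v,x_i)\le d(S_v)\le d(x_0,\dots,x_k)=r$, where I use that $d$ on a tuple is its diameter (\cref{def:ripsd}) and that the diameter of a subset of $\{x_0,\dots,x_k\}$ is at most $r$.

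The main step is then a two-hop triangle-inequality estimate. Given $v,w\in V$, pick witnesses $x_i,x_j\in\{x_0,\dots,x_k\}$ with $d(v,x_i)\le r$ and $d(w,x_j)\le r$ as above; since $x_i$ and $x_j$ are both among $x_0,\dots,x_k$ we have $d(x_i,x_j)\le r$, so $d(v,w)\le d(v,x_i)+d(x_i,x_j)+d(x_j,w)\le 3r$. This gives $\operatorname{diam}(V)\le 3r$, and the conclusion follows as above.

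I do not expect a serious obstacle; this is essentially just the triangle inequality, and the $3r$ bound is tight because the witness vertices $x_i,x_j$ for $v$ and $w$ need not coincide. The only points requiring care are (i) using the convention that $\bX(S)$ is only taken over nonempty $S$, together with $d(S)\le r$ for every such $S$, and (ii) that $V\ne\emptyset$, which is built into the definition of $\calUb(x_0,\dots,x_k)$. The triangle inequality is the sole place where metric (rather than merely dissimilarity) structure enters.
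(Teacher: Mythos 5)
Your proof is correct and follows essentially the same argument as the paper: each point of $V$ is within distance $r$ of some vertex $x_i$, the vertices are pairwise within $r$ of each other, and the triangle inequality gives diameter at most $3r$, so $\calR(V;3r)$ is a full simplex. The extra care you take with the definition of $\bar{\bX}(x_0,\dots,x_k)$ and nonemptiness of $V$ is a welcome but minor elaboration of what the paper leaves implicit.
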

\begin{proof}
Let $y,y'\in V$.  Then there are some $x,x'\in \{x_0,\dots,x_k\}$ for which $d(y,x), d(y',x') \le r$.  Because $d(x,x')\le r$, by triangle inequality $d(y,y')\le 3r$.  Thus, $\calR(V;3r)$ forms a simplex, so is contractible. 
\end{proof}

In general, the bound in \cref{lem:restricted_cover_3r} can be pessimistic.  For instance,
\begin{lemma}\label{lem:restricted_cover_2r}
Let $r \le R_2$, so that for $d_X(x_0,\dots,x_k) \le r$, $\bX(x_0,\dots,x_k)\cap V$ is non-empty for each $V\in \calUb(x_0,\dots,x_k)$.  Then $\calR(V; 2r)$ is contractible.
\end{lemma}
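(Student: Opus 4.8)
The plan is to exhibit an explicit cone point for $\calR(V;2r)$, just as in the proof of \cref{lem:restricted_cover_3r}, but now exploiting the hypothesis $r\le R_2$ to obtain the sharper radius $2r$ in place of $3r$. Write $V = V_0\cap\bar{\bX}(x_0,\dots,x_k)$ for the set $V_0\in\calUb$ from which it is obtained, and put $s = d(x_0,\dots,x_k)\le r$. Since $r\le R_2$, \cref{def:rips_inter_thres} guarantees that $\bX(x_0,\dots,x_k)\cap V$ is nonempty; fix a point $w$ in it. Because $\{x_0,\dots,x_k\}$ is itself a member of $\calP(\{x_0,\dots,x_k\})$, we have $\bX(x_0,\dots,x_k)\subseteq\bar{\bX}(x_0,\dots,x_k)$, so $w\in V_0\cap\bar{\bX}(x_0,\dots,x_k) = V$; in particular $w$ is a vertex of $\calR(V;2r)$.

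Next I would estimate the distance from $w$ to an arbitrary vertex $y$ of $\calR(V;2r)$. Since $w\in\bX(x_0,\dots,x_k)$, the definition of the witness set gives $d(w,x_i)\le s$ for every $i$. Since $y\in V\subseteq\bar{\bX}(x_0,\dots,x_k)$, there is some $S\subseteq\{x_0,\dots,x_k\}$ with $y\in\bX(S)$, and hence some $x_i\in S$ with $d(y,x_i)\le d(S)\le s$. The triangle inequality then gives $d(y,w)\le d(y,x_i)+d(x_i,w)\le 2s\le 2r$.

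Finally, since the Vietoris--Rips complex is a flag complex, every simplex $(y_0,\dots,y_m)$ of $\calR(V;2r)$ has all pairwise distances at most $2r$, and by the bound just obtained $d(w,y_j)\le 2r$ for each $j$; hence $(w,y_0,\dots,y_m)$ is again a simplex of $\calR(V;2r)$. Thus $\calR(V;2r)$ is a cone with apex $w$, and is therefore contractible. The one point deserving attention is checking that the witness $w$ genuinely lies in $V$ (not merely in $V_0$ or in $\bar{\bX}(x_0,\dots,x_k)$), which is what makes it a legitimate vertex of the restricted complex; the rest is a routine triangle-inequality computation, so there is no serious obstacle here.
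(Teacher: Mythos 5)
Your proof is correct and follows essentially the same route as the paper's: fix a witness $w\in\bX(x_0,\dots,x_k)\cap V$ (guaranteed by $r\le R_2$), note every other point of $V$ is within $r$ of some $x_i$ while $w$ is within $r$ of every $x_i$, and conclude by the triangle inequality and the flag property that $\calR(V;2r)$ cones off at $w$. Your version merely spells out more carefully why $w$ is actually a vertex of the restricted complex and where the bound $d(y,x_i)\le s$ comes from, which the paper leaves implicit.
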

\begin{proof}
Fix $V\in \calUb$.  By assumption, there is some $y\in V$ so that $d(y,x_i) \le r$ for all $i=0,\dots,k$.  For some other $y'\in V$, we have $d(y,x_i) \le r$ for some $i=0,\dots,k$.  By triangle inequality, $d(y,y')\le 2r$.  Since this holds for all $y'\in V$, $\calR(V; 2r)$ forms a cone with $y$, and is thus contractible.
\end{proof}

We can now tie things together in the following propositions.
\begin{proposition}\label{prop:cover_rips_inter_2r}
$H_k(\calR(\bX, \calU; r))$ and $H_k(\calR(\bX; r))$ are $(\id, 2r)$-interleaved for $r \le R_2$.
\end{proposition}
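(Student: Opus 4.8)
The plan is to apply \cref{prop:carrier_interleaving} with $\X^S = \calR(\bX, \calU; \cdot)$ and $\Y^T = \calR(\bX; \cdot)$, using the inclusion $\iota$ as the $\id$-shift from $\X^S$ to $\Y^T$ and the shift $\beta\colon r \mapsto 2r$ from $\Y^T$ to $\X^S$. On the $\X^S \to \Y^T$ side we use the carrier $\sigma \mapsto \langle \sigma \rangle$, which carries the simplicial inclusion $\iota$ and is trivially $\id$-acyclic; on the $\Y^T \to \X^S$ side we use the carrier $\scrC$ of \cref{eq:rips_cover_carrier}. The earlier lemma that $\scrD$ (of \cref{eq:rips_cover_carrier2}) is acyclic for $r \mapsto 2r$ already handles one of the two composites, so the one genuinely new point is that $\scrC$ is $(r \mapsto 2r)$-acyclic; once that is established, the composite carriers $\scrA \supseteq \scrD\circ\scrC$ and $\scrB \supseteq \scrC\circ\scrD$ required by \cref{prop:carrier_interleaving} can be read off directly.

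To see $\scrC$ is $(r\mapsto 2r)$-acyclic, fix a simplex $(x_0,\dots,x_k)$ and set $r = d(x_0,\dots,x_k)$. I would note that $\scrC(x_0,\dots,x_k) = \langle \bar{\bX}(x_0,\dots,x_k)\rangle$, viewed as a subcomplex of $\calR(\bX, \calU; \cdot)$, is a cover complex over $\calUb(x_0,\dots,x_k)$ whose restriction to a cover set $V$ is the Rips complex $\calR(V; \cdot)$. Since $r \le R_2$, \cref{def:rips_inter_thres} gives that $\calN(\calUb(x_0,\dots,x_k))$ is acyclic and that $\bX(x_0,\dots,x_k) \cap V \ne \emptyset$ for each $V \in \calUb(x_0,\dots,x_k)$; as $\calUb$ is closed under intersection, so is $\calUb(x_0,\dots,x_k)$, hence \cref{lem:restricted_cover_2r} applies to every member of this cover (in particular to those realized as intersections of its members) and shows $\calR(V; t)$ is a cone, so contractible, for all $t \ge 2r$. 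Applying the Nerve theorem, \cref{thm:nerve_theorem}, scale by scale then identifies $\scrC(x_0,\dots,x_k)^t$ up to homotopy with the acyclic complex $\calN(\calUb(x_0,\dots,x_k))$ for every $t \ge 2r$, so $\scrC(x_0,\dots,x_k)$ is acyclic from parameter $2r$ on, which is exactly the $(r\mapsto 2r)$-acyclicity of $\scrC$.

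With both $\scrC$ and $\scrD$ acyclic at parameter $2r$, I would finish by assembling the composites. The carrier $\iota \circ \scrC$ sends a simplex $(x_0,\dots,x_k) \in \calR(\bX; r)$ to $\langle \bar{\bX}(x_0,\dots,x_k)\rangle$ regarded inside $\calR(\bX; r)$, which is precisely $\scrD(x_0,\dots,x_k)$; since $x_i \in \bar{\bX}(x_0,\dots,x_k)$, this carrier contains the identity on $\calR(\bX; r)$, so we may take $\scrB = \scrD$. Symmetrically, $\scrC \circ \iota$ sends a simplex of $\calR(\bX, \calU; r)$ to $\langle \bar{\bX}(x_0,\dots,x_k)\rangle$ regarded inside $\calR(\bX, \calU; r)$, which is the carrier $\scrC$ itself, hence $(r\mapsto 2r)$-acyclic, and it likewise contains the identity because $x_i \in \bar{\bX}(x_0,\dots,x_k)$; so we take $\scrA = \scrC \circ \iota$. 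Invoking \cref{prop:carrier_interleaving} and passing to homology in degree $k$ then yields the claimed $(\id, 2r)$-interleaving. The main obstacle is exactly the $(r\mapsto 2r)$-acyclicity of $\scrC$: one must be careful that $\calUb(x_0,\dots,x_k)$ genuinely covers $\scrC(x_0,\dots,x_k)$ and is closed under the intersections the Nerve theorem needs, so that the $R_2$ witness hypothesis of \cref{lem:restricted_cover_2r} is available on all of them, and that the Nerve theorem is applied at each fixed scale $t \ge 2r$ rather than filtration-wise; the rest is bookkeeping about which ambient complex each carrier lands in.
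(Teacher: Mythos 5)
Your proof is correct and follows essentially the same route as the paper: both hinge on \cref{lem:restricted_cover_2r} showing each $\calR(V;t)$ is a cone for $t \ge 2r$, conclude acyclicity of the carrier $\scrC$ via a nerve theorem (the paper invokes its filtered $\alpha$-acyclic nerve theorem, which degenerates here to the classical statement since the cover sets all become contractible at the fixed scale $2r$, exactly matching your scale-by-scale application of \cref{thm:nerve_theorem}), and assemble the interleaving via \cref{prop:carrier_interleaving} with $\scrA = \scrC\circ\iota$ and $\scrB = \scrD$. The points you flag at the end --- that $\calUb(x_0,\dots,x_k)$ is closed under intersection and covers $\scrC(x_0,\dots,x_k)$, and that the nerve theorem must be applied at each fixed scale --- are precisely the verifications the paper's terse proof leaves implicit.
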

\begin{proposition}\label{prop:cover_rips_inter_3r}
$H_k(\calR(\bX, \calU; r))$ and $H_k(\calR(\bX; r))$ are $(\id, 3r)$-interleaved for $r \le R_3$.
\end{proposition}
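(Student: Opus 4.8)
The plan is to follow the same template as the proof of \cref{prop:cover_rips_inter_2r}, replacing \cref{lem:restricted_cover_2r} and the threshold $R_2$ by \cref{lem:restricted_cover_3r} and $R_3$, and then invoke \cref{prop:carrier_interleaving}. Concretely, I would take $\X^S = \calR(\bX, \calU; r)$ and $\Y^T = \calR(\bX; r)$ (restricting attention to parameters $\le R_3$, where the carriers below are acyclic), shift maps $\alpha = \id$ and $\beta\colon r \mapsto 3r$, and supply four carriers. First, the simplicial inclusion carrier of $\iota\colon \calR(\bX, \calU; r)\to\calR(\bX;r)$, which is $\id$-acyclic and carries $\iota$. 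Second, the witness-set carrier $\scrC$ of \cref{eq:rips_cover_carrier}, running from $\calR(\bX;r)$ to $\calR(\bX,\calU;r)$, which I claim is $(r\mapsto 3r)$-acyclic; this is the only nontrivial point. Third, $\scrA = \scrC\circ\iota$, which coincides with the restriction of $\scrC$ to simplices already present in the cover complex (since $\bar{\bX}(\rho)\subseteq\bar{\bX}(\sigma)$ whenever $\rho\le\sigma$), hence is acyclic for the same reason and carries the internal inclusion $\calR(\bX,\calU;r)\hookrightarrow\calR(\bX,\calU;3r)$, because such a $\sigma$ already lies in $\langle\bar{\bX}(\sigma)\rangle$ at its birth scale. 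Fourth, $\scrB = \scrD$ of \cref{eq:rips_cover_carrier2}, which contains $\iota\circ\scrC$, carries the internal inclusion on $\calR(\bX;r)$, and is acyclic for $r\mapsto 2r$ --- hence for $r\mapsto 3r$ --- by the lemma on $\scrD$ proved just above. Given these, \cref{prop:carrier_interleaving} outputs an $(\id, 3r)$-interleaving of $H_k$ for every $k$, which is the claim.

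So the work is to show $\scrC$ is $(r\mapsto 3r)$-acyclic for $r\le R_3$: for every simplex $\sigma = (x_0,\dots,x_k)$ with $s := d(\sigma)\le R_3$, the subcomplex $\scrC(\sigma)^t = \langle\bar{\bX}(\sigma)\rangle^t$ of $\calR(\bX,\calU;t)$ should be acyclic for all $t\ge 3s$. The first step is to identify this subcomplex. Any simplex of $\calR(\bX,\calU;t)$ whose vertices lie in $\bar{\bX}(\sigma)$ has its vertex set contained in some $V = U\cap\bar{\bX}(\sigma)\in\calUb(\sigma)$, and \cref{lem:restricted_cover_3r} shows that $\calR(V;3s)$ is the complete simplex on $V$, so in particular all pairwise distances inside such a $V$ are at most $3s$; hence for $t\ge 3s$ the complex $\scrC(\sigma)^t$ stops depending on $t$ and equals $\bigcup_{V\in\calUb(\sigma)}\Delta^V$, the union of the full simplices on the sets of $\calUb(\sigma)$. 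The second step is to compute its homology: each $\Delta^V$ is contractible, and every nonempty intersection $\Delta^{V_0}\cap\cdots\cap\Delta^{V_m} = \Delta^{V_0\cap\cdots\cap V_m}$ is again a full simplex and so contractible, so the simplicial Nerve theorem (\cref{thm:nerve_theorem}) gives $\scrC(\sigma)^t\simeq\calN(\calUb(\sigma))$; and this is acyclic by \cref{def:rips_inter_thres} because $s\le R_3$. This finishes the claim.

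The step I expect to be the main obstacle is precisely this acyclicity argument, and in particular getting away with a single shift by a factor of $3$ rather than the dimension-dependent $\alpha^{k+1}$ of \cref{thm:acyclic_nerve_theorem}. The extra leverage is that \cref{lem:restricted_cover_3r} gives strictly more than $\alpha$-acyclicity of each restricted complex $\calR(V;\cdot)$: it shows $\calR(V;3s)$ is the full simplex on $V$, so the $\calR(V;\cdot)$ have already stabilized at scale $3s$ and the filtered nerve comparison collapses to an honest homotopy equivalence at that single scale. Some care is needed to check that $\{\Delta^V\}_{V\in\calUb(\sigma)}$ genuinely covers $\scrC(\sigma)^t$ for $t\ge 3s$ and that all multi-fold intersections are full simplices --- both of which rest on $\calUb$, and hence $\calUb(\sigma)$, being closed under intersection --- but past these points the argument is routine. (As in the $2r$ case and in the acyclicity of $\scrD$, the triangle inequality is used, so $(\bX,d)$ is taken to be a metric space.)
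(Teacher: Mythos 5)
Your proof is correct and follows the same skeleton as the paper's: show the witness carrier $\scrC$ of \cref{eq:rips_cover_carrier} is acyclic after the stated shift, supply the inclusion carrier, $\scrA = \scrC\circ\iota$, and $\scrB = \scrD$ of \cref{eq:rips_cover_carrier2}, and hand everything to \cref{prop:carrier_interleaving}. Where you genuinely improve on the paper's exposition is the key acyclicity step. The paper's proof of \cref{prop:cover_rips_inter_2r}/\cref{prop:cover_rips_inter_3r} points to the approximate nerve theorem (\cref{thm:acyclic_nerve_theorem}), but that result only yields a dimension-dependent shift $\alpha^{k+1}$, which would not match the stated dimension-independent factor of $3$. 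You correctly identify that \cref{lem:restricted_cover_3r} gives strictly more than $\alpha$-acyclicity: each $\calR(V;\cdot)$ has \emph{stabilized} to a full simplex by scale $3s$, so $\scrC(\sigma)^t$ coincides for $t \ge 3s$ with the fixed union $\bigcup_{V\in\calUb(\sigma)}\Delta^V$, whose intersections are all full simplices (using that $\calUb$, hence $\calUb(\sigma)$, is closed under nonempty intersection), and the \emph{standard} Nerve theorem (\cref{thm:nerve_theorem}) applies at one scale to give a homotopy equivalence with $\calN(\calUb(\sigma))$, which is acyclic for $s\le R_3$ by definition of $R_3$. This collapse to a single-scale homotopy equivalence is exactly what makes the interleaving shift independent of homological degree; your write-up makes explicit a step that the paper's phrasing leaves implicit.
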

\begin{proof}
We use the approximate nerve theorem, \cref{thm:acyclic_nerve_theorem}, to show that $\calC$ is acyclic under the conditions of \cref{prop:cover_rips_inter_2r} and \cref{prop:cover_rips_inter_3r}. 
By definition of $R_3$, the nerve $\calN(\calUb(x_0,\dots,x_k))$ is acyclic in both propositions. \cref{lem:restricted_cover_2r} or \cref{lem:restricted_cover_3r} ensures that each set in the nerve is $2r$ or $3r$ acyclic respectively, so the whole carrier is $2r$ or $3r$ acyclic respectively.
\end{proof}


Note that the sets in the covers $\calU$ do not need to be acyclic at the levels prescribed, but rather their restriction to points within a certain distance of each simplex.  This means there can be a variety of non-trivial structure in each set in the cover.

\subsection{Sparse Filtrations via Covers}\label{sec:sparse_filt_cover}

\Cref{thm:rips_cover_interleaving} can be applied to any cover $\calU$ of a data set $\bX$, and to a certain extent can guide the selection of a cover $\calU$ that increases $R_1, R_2$, and $R_3$ as much as possible:
\begin{enumerate}
\item $R_1$ is determined by the threshold at which for all $x\in \bX$, there exists some set $\calU$ which contains all its $R_1$-nearest neighbors.
\item To maximize $R_2$, we want to ensure that the cover $\calU$ contains witnesses to simplices.  This may require sets covering large distances in sparse regions.
\item To maximize $R_3$, we want to make $\calN(\calUb(x_0,\dots,x_k))$ acyclic for all simplices $(x_0,\dots, x_k)$.  This requires sufficient overlap of sets in cover.
\end{enumerate}

If the goal is to construct a cover that gives an interleaving for all filtration values, a practical approach is to construct a sparse filtration, as originally proposed by Sheehy \cite{sheehyLinearSizeApproximationsVietoris2013}.  We consider a variant of this approach using a Vietoris-Rips cover complex which is amenable to a straightforward analysis. Another, more geometric, approach based on persistent nerves is studied in \cite{cavanna_geometric_2015}.  The key differences between the approach here and \cite{sheehyLinearSizeApproximationsVietoris2013,cavanna_geometric_2015} are that the Vietoris-Rips cover complexes are not generally flag complexes, and that we do not consider re-weighting of edges to tighten the interleaving.

Consider a nested sequence of greedily chosen landmark sets $\bL_0 \subset \bL_1 \subset \dots \subset \bL_n = \bX$, so $\bL_i = \bL_{i-1} \cup \{x_i\}$ where $x_0$ can be chosen arbitrarily, and $x_i\in \bX$ is a point that realizes the Hausdorff distance $d_H(\bL_{i-1}, \bX)$. Let $\lambda_i = d_H(\bL_{i-1}, \bX)$, with $\lambda_0 = \infty$, and let $c > 1$ be a fixed constant.  We construct a cover $\calU$ of the data set $\bX$ by associating a set $U_x$ to each element $x\in \bX$
\begin{equation}
    U_x = \bigcup_{i=1}^n \{ \ell \in \bL_i \mid  d(\ell, x) \le c \lambda_i\}.
\end{equation}
Each set is non empty because $x\in \bL_n$ and $d(x,x) = 0$ implies $x\in U_x$.  Furthermore, the Nerve of the cover $\calN(\calU)$ is acyclic, as the single point $x_0$ contained in $\bL_0$ is contained in every set, every intersection of sets in $\calU$ is non-empty.

\begin{lemma}\label{lem:insertion_lb}
Let $x_0,\dots, x_q\in \bX$, with $d(x_0,\dots,x_q) = r$.  Then for any $\lambda_i \ge \frac{r}{c-1}$, there exists some $\ell \in \bL_i$ so that $\ell \in \cap_{i=0}^k U_{x_i}$.
\end{lemma}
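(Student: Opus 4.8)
The plan is to exhibit a single landmark $\ell$ in $\bL_i$ that is close enough to every $x_j$ to lie in $U_{x_j}$, using the covering property of greedy landmark sets. First I would recall that because $\bL_{i-1}$ realizes Hausdorff distance $\lambda_i = d_H(\bL_{i-1},\bX)$ to $\bX$, every point of $\bX$ — in particular $x_0$ — has some landmark $\ell\in\bL_{i-1}\subseteq\bL_i$ with $d(\ell,x_0)\le\lambda_i$. This is the candidate landmark. The next step is to bound $d(\ell,x_j)$ for an arbitrary $j\in\{0,\dots,q\}$: by the triangle inequality, $d(\ell,x_j)\le d(\ell,x_0)+d(x_0,x_j)\le\lambda_i+r$, since $d(x_0,x_j)\le d(x_0,\dots,x_q)=r$. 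So it suffices to check that $\lambda_i+r\le c\lambda_i$, i.e. $r\le(c-1)\lambda_i$, which is exactly the hypothesis $\lambda_i\ge\frac{r}{c-1}$.

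From here I would conclude: since $\ell\in\bL_i$ and $d(\ell,x_j)\le c\lambda_i$, the definition of $U_{x_j}=\bigcup_{m}\{\ell'\in\bL_m\mid d(\ell',x_j)\le c\lambda_m\}$ shows $\ell\in U_{x_j}$ (taking the index $m=i$ in the union). As this holds for every $j$, we get $\ell\in\bigcap_{j=0}^{q}U_{x_j}$, which is the claim (modulo the harmless index mismatch between the statement's $\cap_{i=0}^{k}$ and the $q$ points $x_0,\dots,x_q$).

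I do not anticipate a serious obstacle here — the argument is a one-step triangle-inequality estimate once the right candidate landmark is identified. The only subtlety worth stating carefully is \emph{which} landmark set to draw $\ell$ from: one wants $\ell\in\bL_{i-1}$ so that the covering bound $d(\ell,x_0)\le\lambda_i=d_H(\bL_{i-1},\bX)$ applies, and then observe $\bL_{i-1}\subseteq\bL_i$ so that $\ell$ is also available at level $i$ with radius $c\lambda_i$. A minor point to double-check is the degenerate case $\lambda_0=\infty$, which trivially satisfies the inequality and for which every landmark set beyond $\bL_0$ works; and the case $r=0$, where any $\ell$ within $c\lambda_i$ of $x_0$ suffices. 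Otherwise the proof is immediate.
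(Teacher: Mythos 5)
Your proof is correct and follows essentially the same route as the paper's: pick $\ell\in\bL_{i-1}\subseteq\bL_i$ within $\lambda_i$ of $x_0$ via the Hausdorff bound, then use the triangle inequality and the hypothesis $\lambda_i\ge\frac{r}{c-1}$ to get $d(\ell,x_j)\le\lambda_i+r\le c\lambda_i$, so $\ell$ lies in each $U_{x_j}$. Your explicit attention to which landmark set $\ell$ is drawn from (and the index mismatch in the statement) is slightly more careful than the paper's write-up, but the argument is the same.
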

\begin{proof}
We consider a landmark set $\bL\subseteq \bX$ with $d_H(\bL, \bX) = \epsilon$.  Without loss of generality, we take the point $x_0$, for which there must exists some $\ell\in \bL$ such that $d(x_0, \ell) \le \epsilon$ by the Hausdorff distance bound, so $\ell \in U_{x_0}$.  In order to guarantee that $\ell$ is in $U_{x_i}, i=1,\dots,q$, we must satisfy
\begin{align*}
    d(x_i,\ell) \le d(x_i,x_0) + d(x_0,\ell) &\le c \epsilon\\
    r + \epsilon &\le c \epsilon\\
    r &\le (c-1) \epsilon\\
    \epsilon &\ge \frac{r}{c-1}
\end{align*}
Thus, for any $\bL_i$ with $\lambda_i = d_H(\bL_{i-1}, \bX)\ge \frac{r}{c-1}$, there exists such an $\ell$.
\end{proof}
\begin{lemma}\label{lem:witness_ub}
Let $x_0,\dots, x_q\in \bX$, with $d(x_0,\dots,x_q) = r$, and let $i$ be the index such that $\lambda_i \ge \frac{r}{c-1}$ and $\lambda_{i+1} < \frac{r}{c-1}$, then there exist $\ell_j\in \bL_i, j=0,1,\dots,q$ such that $d(x_j, \ell_j) \le \lambda_{i+1} < \frac{r}{c-1}$.
\end{lemma}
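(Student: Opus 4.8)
The claim is essentially a restatement of the definition of the greedy landmark sequence together with the choice of index $i$, so the plan is to unwind these carefully. First I would recall that $\lambda_{i+1} = d_H(\bL_i, \bX)$: this is the Hausdorff distance from the landmark set $\bL_i$ to the whole space $\bX$, i.e.\ the value realized when the next landmark $x_{i+1}$ is chosen. By the definition of Hausdorff distance, every point of $\bX$ lies within distance $\lambda_{i+1}$ of some point of $\bL_i$. In particular, for each $j = 0, 1, \dots, q$, applying this to the point $x_j$ yields a landmark $\ell_j \in \bL_i$ with $d(x_j, \ell_j) \le \lambda_{i+1}$.

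The only remaining piece is the strict inequality $\lambda_{i+1} < \frac{r}{c-1}$, but this is exactly the hypothesis on the index $i$ (we chose $i$ so that $\lambda_i \ge \frac{r}{c-1}$ and $\lambda_{i+1} < \frac{r}{c-1}$), so chaining it with $d(x_j,\ell_j) \le \lambda_{i+1}$ gives the stated bound $d(x_j,\ell_j) \le \lambda_{i+1} < \frac{r}{c-1}$. One should also note that such an index $i$ exists: $\lambda_0 = \infty \ge \frac{r}{c-1}$ provides a lower anchor, and $\lambda_n = d_H(\bL_{n-1},\bX)$ together with $\bL_n = \bX$ forces $\lambda_{n+1} = 0 < \frac{r}{c-1}$ (or one stops at the first $\lambda$ value dropping below the threshold), and the sequence $\lambda_i$ is non-increasing, so a crossover index is well defined.

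There is no real obstacle here — the lemma is definitional bookkeeping — but the one subtlety worth stating explicitly is the distinction between this lemma and \cref{lem:insertion_lb}: the previous lemma produces a \emph{single} landmark $\ell$ lying in the common intersection $\cap_j U_{x_j}$ by using the larger radius $c\lambda_i$, whereas here we extract \emph{separate} witnesses $\ell_j$ for each $x_j$ at the tighter scale $\lambda_{i+1}$, which is what will later be needed to build simplices in the Vietoris-Rips cover complex at small parameter. So in the write-up I would emphasize that we are using $\bL_i$ (not $\bL_{i+1}$) precisely so that $\lambda_{i+1}$ is the governing Hausdorff radius, and that no claim is made that the $\ell_j$ coincide or lie in a common cover set.
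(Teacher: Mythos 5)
Your proof is correct and follows the same route as the paper's: both simply unwind the definition $\lambda_{i+1} = d_H(\bL_i,\bX)$ to produce, for each $x_j$, a landmark $\ell_j \in \bL_i$ within distance $\lambda_{i+1}$, and then invoke the defining property of the index $i$ for the strict inequality. The extra remarks on the existence of the crossover index and the contrast with \cref{lem:insertion_lb} are sound but not needed beyond what the paper records.
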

\begin{proof}
We have that $\lambda_{i+1} = d_H(\bL_i, \bX) < \frac{r}{c-1}$, so for all $x\in \bX$, there is some $\ell\in \bL_i$ so that $d(x,\ell) \le \lambda_{i+1} < \frac{r}{c-1}$.
\end{proof}
\begin{proposition}\label{prop:sparse_witness}
Let $x_0,\dots, x_q\in \bX$, with $d(x_0,\dots,x_q) = r$.  Then there exists an $\ell \in \cap_{j=0}^q U_{x_j}$ with $d(x_j, \ell) \le \frac{cr}{c-1}$ for all $j=0,\dots,q$.
\end{proposition}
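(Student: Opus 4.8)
The plan is to deduce the claim directly from \cref{lem:witness_ub} together with a single triangle-inequality estimate (\cref{lem:insertion_lb} is not needed once \cref{lem:witness_ub} is available). Let $i$ be the index with $\lambda_i \ge \frac{r}{c-1}$ and $\lambda_{i+1} < \frac{r}{c-1}$ furnished by \cref{lem:witness_ub}, and let $\ell := \ell_0 \in \bL_i$ be the corresponding landmark, so $d(x_0,\ell) \le \lambda_{i+1} < \frac{r}{c-1}$. The point is that this \emph{one} landmark $\ell$ works for all indices $j$ simultaneously, which is what makes $\bigcap_{j} U_{x_j}$ nonempty. Since $d(x_j,x_0) \le d(x_0,\dots,x_q) = r$, the triangle inequality gives
\[
d(x_j,\ell) \le d(x_j,x_0) + d(x_0,\ell) \le r + \lambda_{i+1} < r + \frac{r}{c-1} = \frac{cr}{c-1},
\]
which is precisely the asserted distance bound. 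Moreover, because $\lambda_i \ge \frac{r}{c-1}$ we also have $d(x_j,\ell) < \frac{cr}{c-1} \le c\lambda_i$, and since $\ell \in \bL_i$ the definition $U_{x_j} = \bigcup_m \{\ell' \in \bL_m \mid d(\ell',x_j) \le c\lambda_m\}$ shows $\ell \in U_{x_j}$. As $j \in \{0,\dots,q\}$ was arbitrary and $\ell$ is fixed, $\ell \in \bigcap_{j=0}^q U_{x_j}$.

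The only subtlety is the two extreme values of $i$. If $i=0$ then $\lambda_0 = \infty$ and the level $0$ does not occur in the union defining $U_{x_j}$; but here $\ell$ is simply the unique point of $\bL_0$, which (as noted in the construction of $\calU$) lies in every set of the cover, and $d(x_j,\ell) \le d_H(\bL_0,\bX) = \lambda_1 < \frac{r}{c-1} \le \frac{cr}{c-1}$. If no level $i+1$ exists, i.e.\ $i = n$, one adopts the convention $\lambda_{n+1} := d_H(\bL_n,\bX) = 0$; then $r>0$ forces $\lambda_{n+1} < \frac{r}{c-1}$, \cref{lem:witness_ub} produces an $\ell$ at distance $0$ from $x_0$ (so one may take $\ell = x_0$), and $d(x_j,\ell) \le r \le c\lambda_n$ follows from $\lambda_n \ge \frac{r}{c-1}$. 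In every case the existence of such an $i$ is guaranteed since $(\lambda_i)$ is non-increasing with $\lambda_0 = \infty$ and $\lambda_{n+1} = 0$.

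I do not expect any genuine obstacle: once \cref{lem:witness_ub} is in hand the argument is essentially one application of the triangle inequality, and the conceptual content is just that the coarsest landmark level still satisfying $\lambda_i \ge \frac{r}{c-1}$ is coarse enough to contain a common landmark $\ell$ in every $U_{x_j}$, yet fine enough ($\lambda_{i+1} < \frac{r}{c-1}$) that $\ell$ remains within $\frac{cr}{c-1}$ of each $x_j$. The only care required is the index bookkeeping at the two endpoints described above.
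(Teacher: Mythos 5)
Your proof is correct and follows essentially the same route as the paper's: pick the transition index $i$ with $\lambda_i \ge \frac{r}{c-1} > \lambda_{i+1}$, take the landmark $\ell\in\bL_i$ near $x_0$ furnished by \cref{lem:witness_ub}, and observe it lies in every $U_{x_j}$ by a single triangle inequality. The only cosmetic difference is that the paper cites \cref{lem:insertion_lb} for the last step, whereas you inline that lemma's triangle-inequality computation directly; you also spell out the distance bound $d(x_j,\ell)\le\frac{cr}{c-1}$ (which the paper's proof leaves implicit in its lemma citations) and handle the endpoint index cases explicitly, both of which make the argument more complete than the one in the text.
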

\begin{proof}
Let $i$ be the index such that $\lambda_i \ge \frac{r}{c-1}$ and $\lambda_{i+1} < \frac{r}{c-1}$.  From \cref{lem:witness_ub}, there is an $\ell \in \bL_i$ with $d(\ell, x_0) < \frac{r}{c-1}$, and since $\bL_i \ge \frac{r}{c-1}$, $\ell \in U_{x_0}$.  Now, from \cref{lem:insertion_lb}, such an $\ell$ is also in $U_{x_j}$ for $j=1,\dots,q$.
\end{proof}
This motivates the construction of a carrier $\scrC:\calR(\calX; r) \to \calR(\calX, \calU; r)$.  Let 
\begin{equation}\label{eq:sparse_carrier}
    \scrC(x_0,\dots,x_q) = \bigg\langle  \{\ell(x_0,\dots,x_q)\} \cup \bigcup_{\sigma \in \calP(x_0,\dots,x_k)} \scrC(\sigma) \bigg\rangle
\end{equation}
where $\ell(x_0,\dots,x_q)$ is an arbitrary choice of $\ell$ which satisfies \cref{prop:sparse_witness}.
\begin{proposition}\label{prop:sparse_carrier}
Let $d(x_0,\dots,x_q) = r$. Then the carrier in \cref{eq:cover_interleaving} is acyclic at level $\frac{2c}{c-1} r$.
\end{proposition}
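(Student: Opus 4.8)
The plan is to prove the stronger statement that, evaluated at filtration value $\tfrac{2c}{c-1}r$ with $r=d(x_0,\dots,x_q)$, the complex $\scrC(x_0,\dots,x_q)$ of \cref{eq:sparse_carrier} is a \emph{cone} with apex $\ell(x_0,\dots,x_q)$; a cone is contractible, hence acyclic, so the only real content is to check that this cone genuinely sits inside $\calR(\bX,\calU;\tfrac{2c}{c-1}r)$ --- i.e. that each of its simplices respects both the Rips threshold and the requirement of lying in a single set of $\calU$.

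First I would unwind the recursion in \cref{eq:sparse_carrier}, which exhibits $\scrC(x_0,\dots,x_q)$ as the simplicial cone $\ell(x_0,\dots,x_q)\ast\big(\bigcup_{\sigma\subsetneq\{x_0,\dots,x_q\}}\scrC(\sigma)\big)$. Writing $\ell(\{x_i\})=x_i$ for singletons and using that $\scrC(\sigma')\subseteq\scrC(\sigma)$ whenever $\sigma'\subseteq\sigma$, an easy induction on $q$ shows that the simplices of $\scrC(x_0,\dots,x_q)$ are exactly the sets $\{\ell(\tau_1),\dots,\ell(\tau_m)\}$ indexed by strictly descending chains of nonempty subsets $\tau_1\supsetneq\tau_2\supsetneq\dots\supsetneq\tau_m$ with $\tau_1\subseteq\{x_0,\dots,x_q\}$ (the simplex contains the apex $\ell(x_0,\dots,x_q)$ precisely when $\tau_1=\{x_0,\dots,x_q\}$). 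Since $\scrC(x_0,\dots,x_q)$ is literally a cone it deformation retracts onto its apex and is contractible, regardless of what the base $\bigcup_\sigma\scrC(\sigma)$ looks like; so acyclicity will follow once the complex is shown to be a legitimate subcomplex of $\calR(\bX,\calU;\tfrac{2c}{c-1}r)$.

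Next I would fix a simplex $\{\ell(\tau_1),\dots,\ell(\tau_m)\}$ as above and verify the two membership conditions. For the Rips threshold: given $i,j$ with $\tau_j\subseteq\tau_i$, choose any $y\in\tau_j$; since $\ell(\tau_i),\ell(\tau_j)$ are chosen to satisfy \cref{prop:sparse_witness} we get $d(y,\ell(\tau_i))\le\tfrac{c\,d(\tau_i)}{c-1}$ and $d(y,\ell(\tau_j))\le\tfrac{c\,d(\tau_j)}{c-1}$, and as $\tau_i,\tau_j\subseteq\{x_0,\dots,x_q\}$ we have $d(\tau_i),d(\tau_j)\le r$, so the triangle inequality yields $d(\ell(\tau_i),\ell(\tau_j))\le\tfrac{2cr}{c-1}$. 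For the cover condition: because $\tau_1\supsetneq\dots\supsetneq\tau_m$ is a chain, the smallest member $\tau_m$ is nonempty and is contained in every $\tau_i$; fixing any $x\in\tau_m$, \cref{prop:sparse_witness} gives $\ell(\tau_i)\in\bigcap_{z\in\tau_i}U_z\subseteq U_x$ for each $i$, so the whole simplex lies in the single set $U_x\in\calU$. Both conditions hold, so $\scrC(x_0,\dots,x_q)\subseteq\calR(\bX,\calU;\tfrac{2c}{c-1}r)$, and by the previous paragraph it is acyclic there.

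The step I expect to be the crux is the cover condition, and what makes it work is precisely the recursive cone structure: it forces the index sets appearing in any single simplex of $\scrC(x_0,\dots,x_q)$ to form a chain, which is exactly what is needed to place all of their witness points in one common set $U_x$. Had one instead defined $\scrC(x_0,\dots,x_q)$ as the full subcomplex of $\calR(\bX,\calU;\cdot)$ spanned by all the witness points $\{\ell(\sigma)\}_\sigma$, this would fail --- witness points of incomparable subsets need not share a common cover set, and the resulting complex can fail to be acyclic --- so it is important that \cref{eq:sparse_carrier} builds $\scrC(x_0,\dots,x_q)$ by iterated coning rather than by spanning.
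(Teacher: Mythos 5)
Your proof is correct and follows essentially the same route as the paper's: both exhibit the carrier as a cone with apex $\ell(x_0,\dots,x_q)$ and rely on the same triangle-inequality estimate $\tfrac{c}{c-1}r+\tfrac{c}{c-1}r=\tfrac{2c}{c-1}r$ drawn from \cref{prop:sparse_witness}. You are somewhat more careful than the paper on one point worth keeping: because the cover complex is not a flag complex, bounding distances to the apex does not by itself make the carrier a cone, and your chain/minimal-element argument for placing each cone simplex inside a single set $U_x$ supplies the cover-membership check that the paper's proof handles only implicitly via the remark that $\ell$ lies in every $U_{x_i}$.
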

\begin{proof}
We consider when the carrier forms a cone with $\ell = \ell(x_0,\dots,x_q)$, which is in every set $U_{x_i}$.  Let $y$ be a point in $\scrC(x_0,\dots,x_q)$.  Either $y$ is one of $x_0,\dots,x_q$, or it was included as $\ell(\sigma)$ for some $\sigma\subset \{x_0,\dots,x_q\}$.  Because $d(\sigma) \le d(x_0,\dots,x_q)$, This means that $d(y,x_j) \le \frac{c}{c-1}r$ for any $x_j \in \sigma$.  We can then bound using triangle inequality
\begin{align}
    d(y,\ell) &\le d(y, x_j) + d(x_j,\ell)\\
    &\le \frac{c}{c-1} r + \frac{c}{c-1} r\\
    &\le \frac{2c}{c-1} r.
\end{align}
Because this holds for any point $y$ in the carrier, $\scrC(x_0,\dots,x_q)$ forms a cone by level $\frac{2c}{c-1} r$, so is acyclic.
\end{proof}
If we wish to obtain a $\alpha = 1+\epsilon$ interleaving, we can calculate that we must set $c = \frac{\epsilon+1}{\epsilon-1}$. In the limit of $c\to \infty$, $\epsilon\to 1$ from above, so we are limited to $\alpha > 2$ using this strategy.  We can achieve $\alpha = 3$ by setting $c = 3$, which limits the size of the sets in the cover while achieving a relatively small multiplicative interleaving bound.  A tighter bound can be achieved by re-weighting edges with distance $\ge \frac{c}{c-1} r$ in \cref{prop:sparse_carrier} -- see \cite{sheehyLinearSizeApproximationsVietoris2013,cavanna_geometric_2015} for details.

\section{Computations}\label{sec:computations}

In this section, we demonstrate the use of Vietoris-Rips cover complexes in studying the homology of point cloud data.  We first examine how several different covers can be used to investigate the homology of a sample from the torus.  Next, we use the greedy landmark cover of \cref{sec:sparse_filt_cover} to investigate the homology of $d$-dimensional Klein bottles associated with high-dimensional image patches. We have incorporated an implementation of the Vietoris-Rips cover complex into the BATS software\footnote{\url{https://github.com/CompTop/BATS}} package \cite{factorizationView2019} to support our experiments.

\subsection{A Flat Torus}\label{sec:computation_examples}

For our first example, we sample 500 points in a spiral on a flat torus in 4 dimensions.  For intermediate parameters of a filtration, we expect to generally see the homology of the torus $T^2$
\begin{equation}
    H_q(T^2) = \begin{cases}
    \FF & q = 0\\
    \FF \oplus \FF & q = 1\\
    \FF & q = 2
    \end{cases}
\end{equation}
with coefficients in any field.  

\begin{figure}
    \centering
    \includegraphics[width=0.49\linewidth]{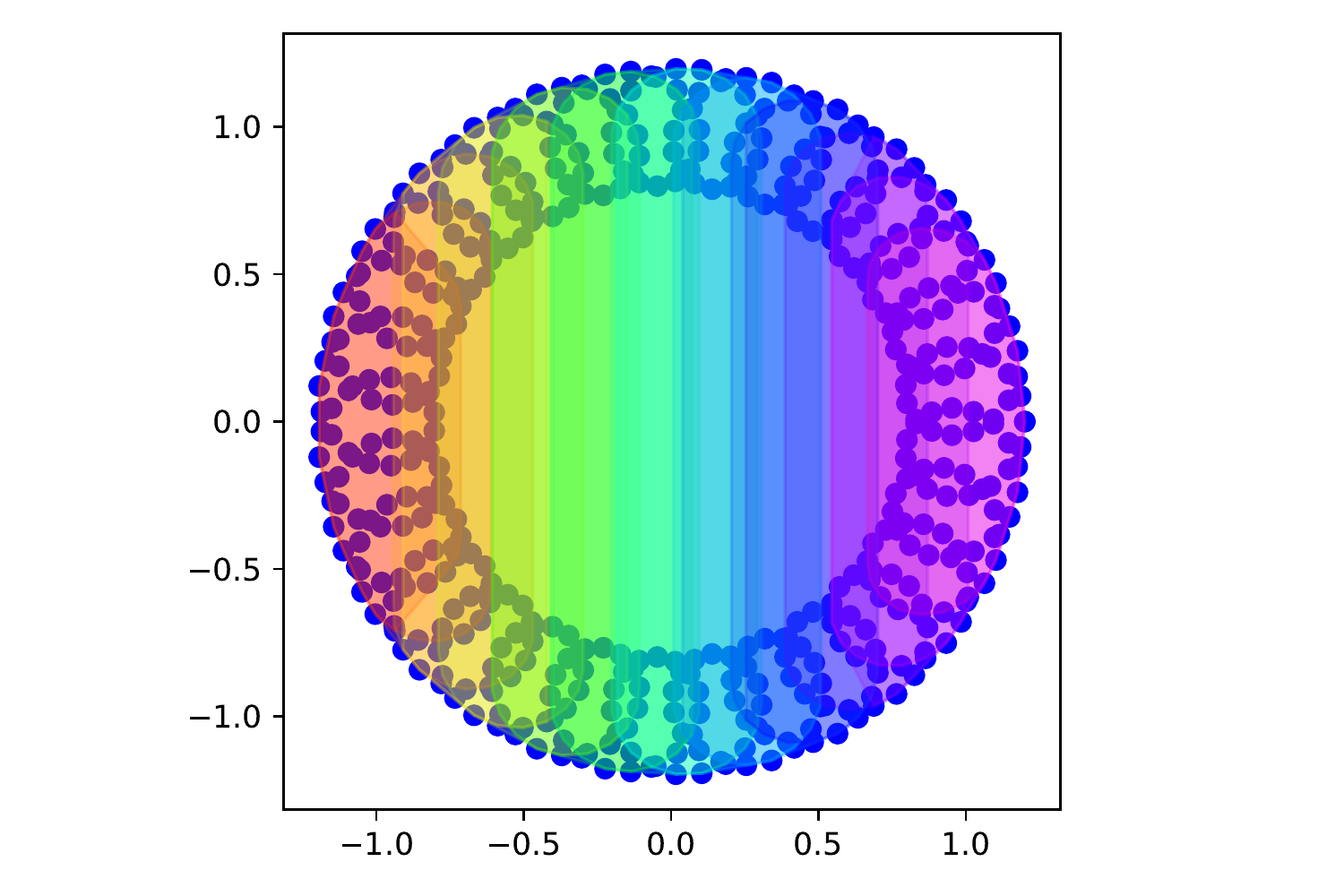}
    \includegraphics[width=0.49\linewidth]{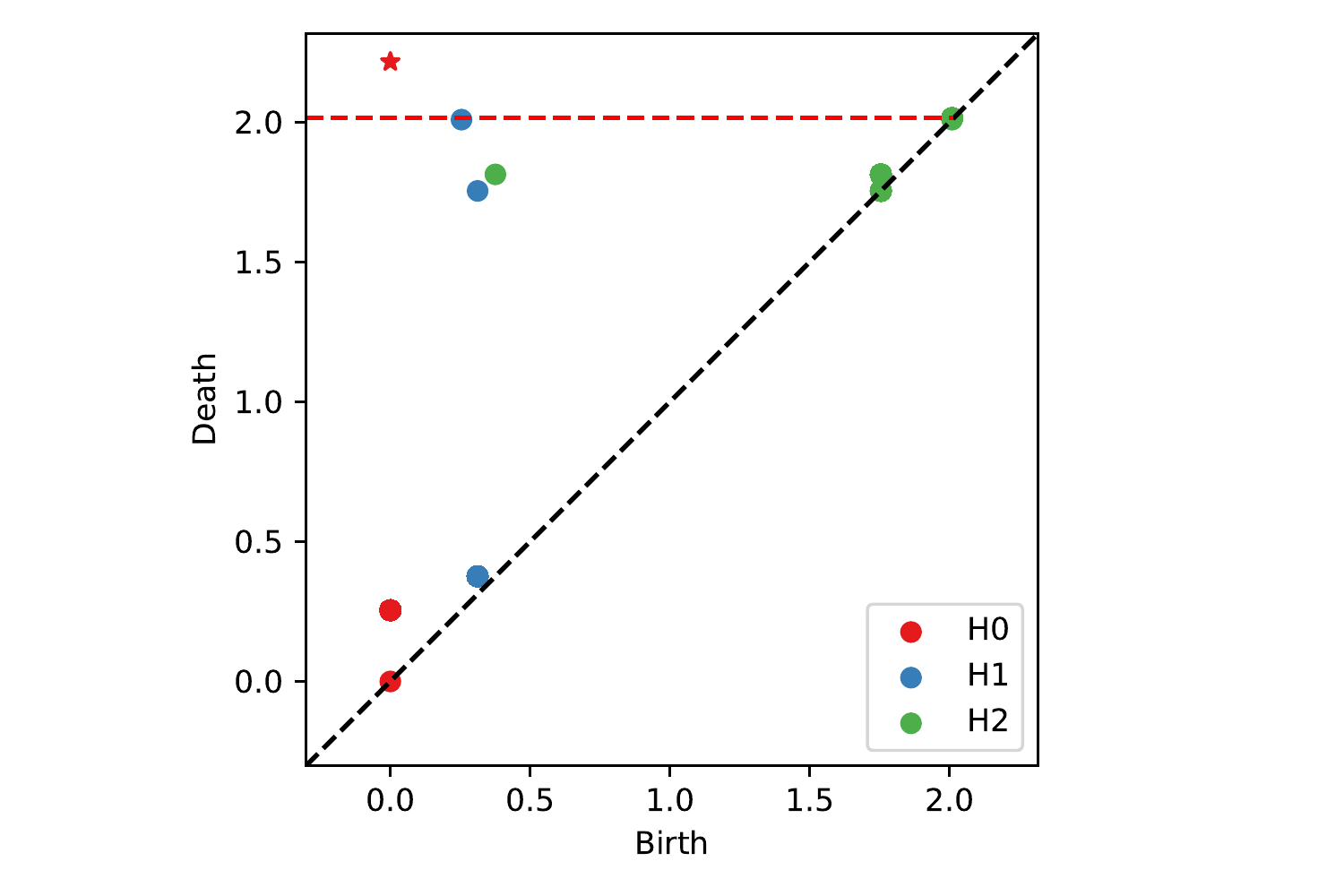}
    \caption{
    Cover of a flat torus pulled back from projection onto the first coordinate and the persistence diagram of $\calR(\bX, \calU; r)$.  The Nerve of the cover is contractible, as it covers an interval.  We see a single essential $H_0$ class (above the dashed red line), two persistent $H_1$ classes, and a persistent $H_2$ class.
    }
    \label{fig:torus_line_pullback}
\end{figure}

In \cref{fig:torus_line_pullback}, we pull back a cover of an interval in one dimension covering the projection of the data set onto the first coordinate.  In this case, each set in the cover has non-trivial structure - generally two robust connected components and two robust generators in $H_1$.  However, the persistent homology of the cover complex $\calR(\bX, \calU; r)$ demonstrates robust generators corresponding to the homology of the torus.

\begin{figure}
    \centering
    \includegraphics[width=0.49\linewidth]{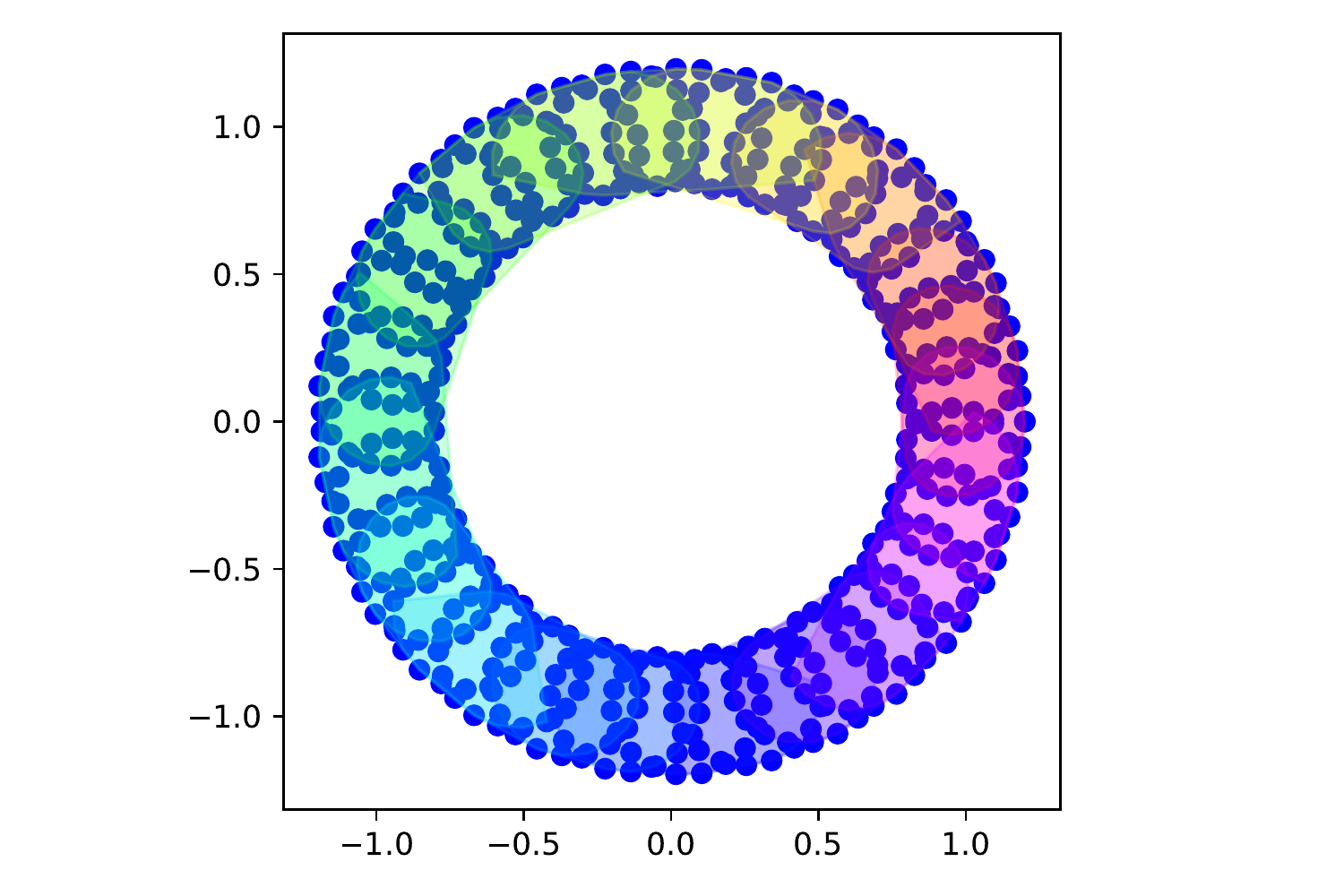}
    \includegraphics[width=0.49\linewidth]{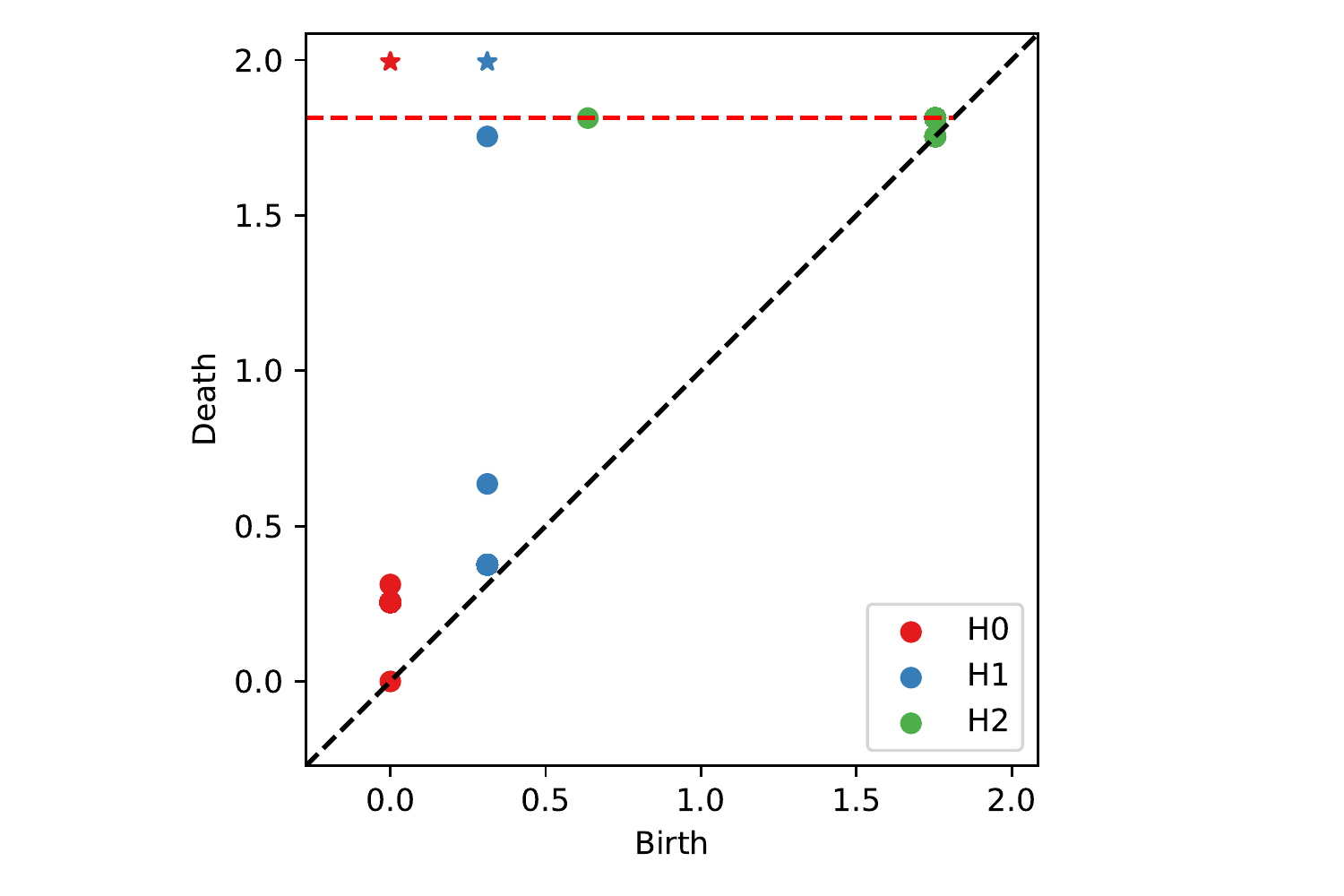}
    \caption{
    Cover of a flat torus pulled back from projection onto first two coordinates and the persistence diagram of $\calR(\bX, \calU; r)$.  The nerve of the cover is homotopic to the circle, and we see essential $H_0$ and essential $H_1$ classes from the nerve of this cover.  We also see an additional persistent $H_1$ class and persistent $H_2$ class.
    }
    \label{fig:torus_circle_pullback}
\end{figure}

In \cref{fig:torus_circle_pullback}, we pull back a cover of the the data set projected onto its first two coordinates.  In this case, the nerve of the cover is homotopic to a circle, and each set in the cover has points that lie in a circle.  In this case, the cover complex $\calR(\bX, \calU; r)$ has prominent homology classes for each class in the torus, but the the classes coming from the nerve of the cover are essential.

\begin{figure}
    \centering
    \includegraphics[width=0.49\linewidth]{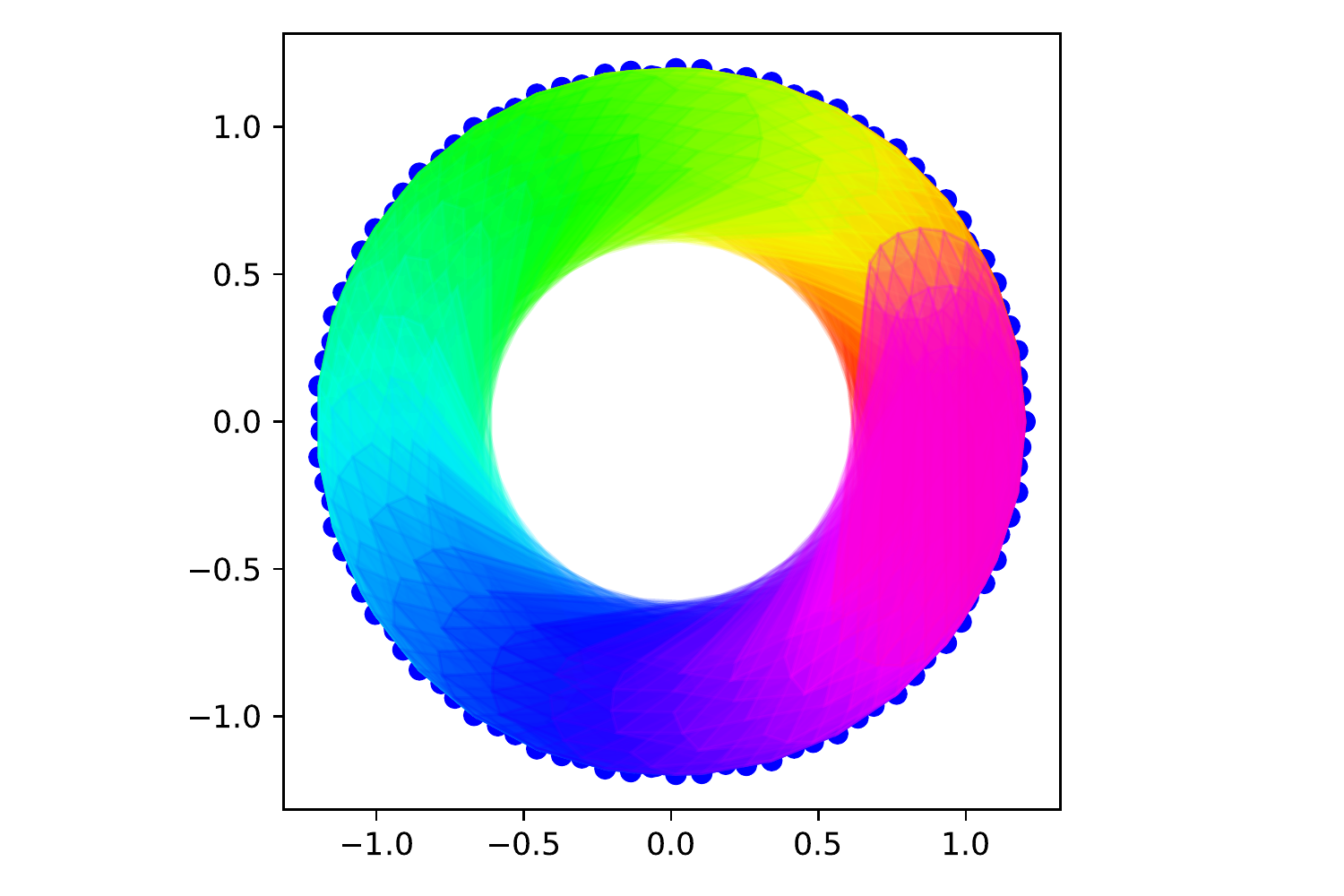}
    \includegraphics[width=0.49\linewidth]{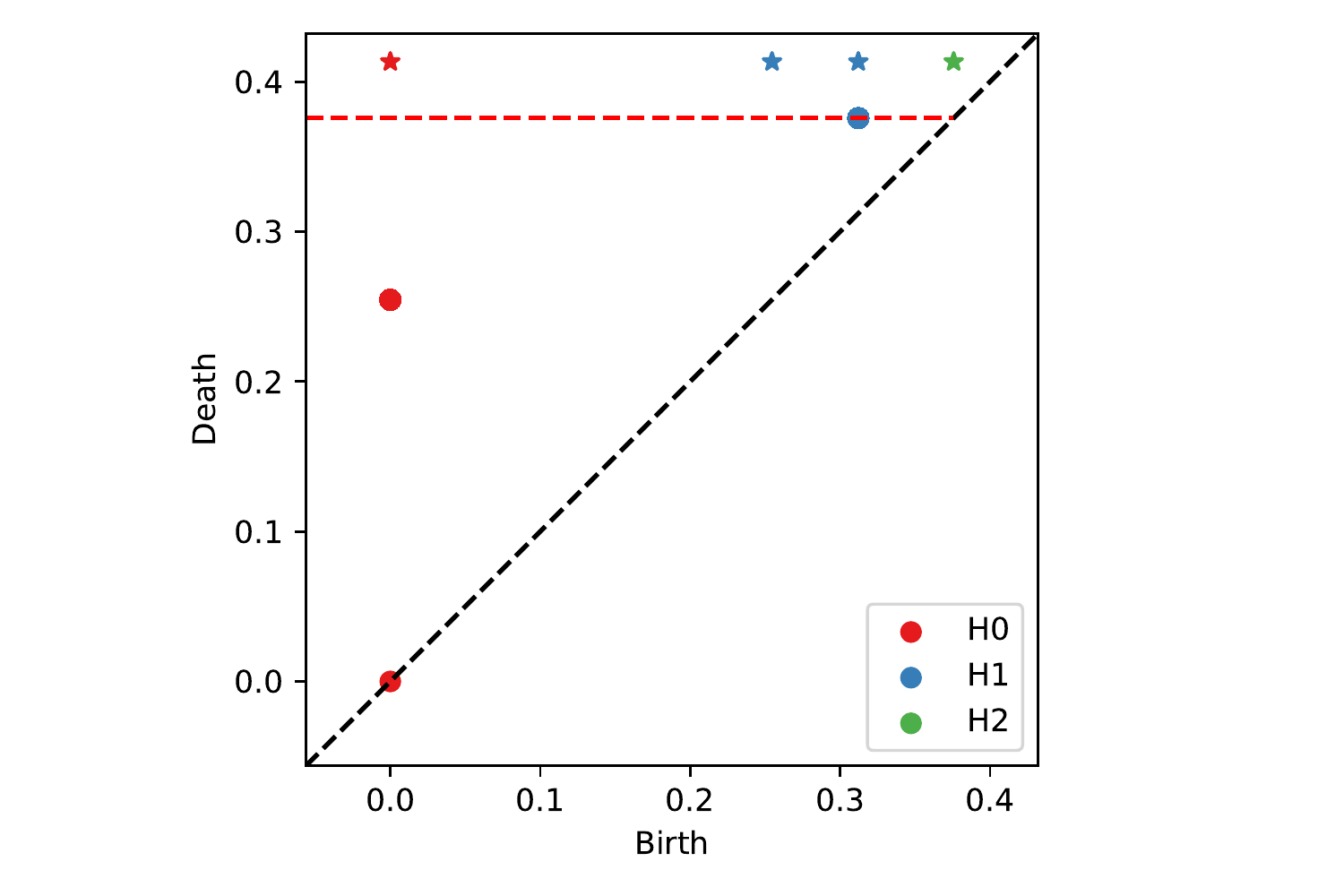}
    \caption{
    Cover of a flat torus obtained from the 20-nearest neighbors of each point and the persistence diagram of $\calR(\bX, \calU; r)$.  The nerve of the cover is homotopic to the torus, and we see essential homology classes corresponding to the homology of the torus.
    }
    \label{fig:torus_nn}
\end{figure}

In \cref{fig:torus_nn}, instead of a pullback cover we simply produce a cover containing a set for every point $ x\in \calX$ containing $x$ itself and its 20 nearest neighbors.  In this case, all sets are close to acyclic, but the nerve of the cover is equivalent to the torus, which we see reflected in the essential homology classes in the persistence diagram.

\begin{figure}
    \centering
    \includegraphics[width=0.49\linewidth]{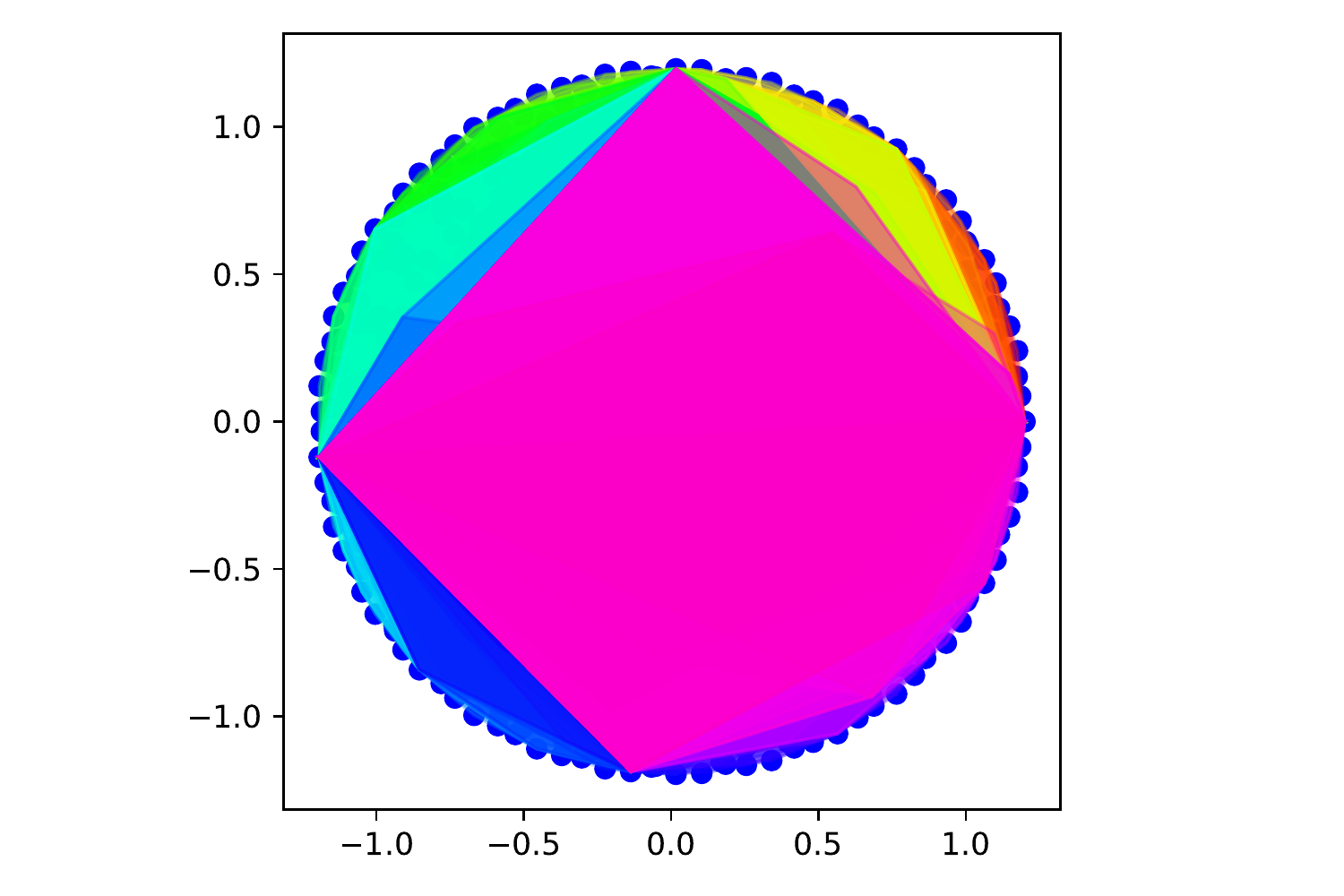}
    \includegraphics[width=0.49\linewidth]{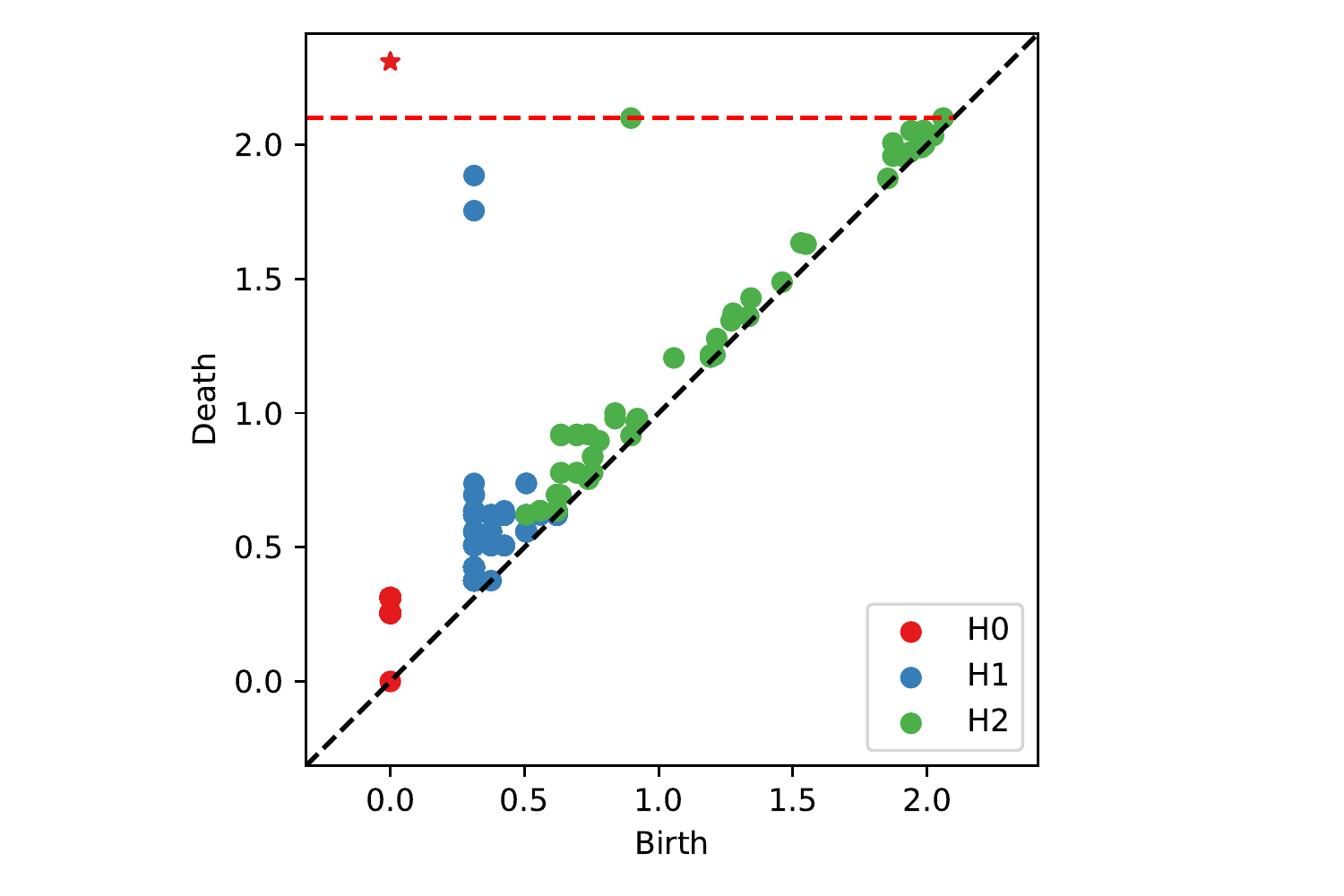}
    \caption{
    Cover of the flat torus based on the procedure in \cref{sec:sparse_filt_cover} and the persistence diagram of $\calR(\bX, \calU; r)$.  The Nerve is contractible, and so we just see a single essential $H_0$ class.  We also see two persistent $H_1$ classes and a persistent $H_2$ class.
    }
    \label{fig:torus_sparse}
\end{figure}

In \cref{fig:torus_sparse} we construct a cover of the data using the procedure in \cref{sec:sparse_filt_cover} with $c=1.0$ for maximum sparsity.  While this low value of $c$ gives a very pessimistic interleaving bound, each set in the cover is quite small, averaging less than $20$ points, and we still see the homology of the torus reflected in the prominent homology classes of the persistence diagram of $\calR(\bX, \calU; r).$





\subsection{$d$-Dimensional Klein bottles}

An interesting space motivated by data which admits a non-trivial fibration structure is a Klein bottle which lies near a high-density subset of high-contrast image patches \cite{CImgPatch}.  The fibration map can be obtained using the Harris edge detector \cite{Harris88,pereaTexture} which sends an image patch to the direction of largest variation.

In \cite{nelson_parameterized_2020}, this model is generalized to higher-dimensional images to obtain a fibration over $\RP{d-1}$ for $d$-dimensional images.  We will refer to this space as the $d$-dimensional Klein bottle, $\calK^d$, which was described independently in a different context by Davis \cite{davis_n-dimensional_2019}.  The homology of this space can be computed using the Leray-Serre spectral sequence \cite{McClearySS} -- see \cite{nelson_parameterized_2020} for explicit computational details.
\begin{equation}\label{eq:harris_homology}
    H_k(\calK^d) = \begin{cases}
        \ZZ & k = 0\\
        \ZZ_2 \oplus \ZZ_2 & 0 < k < d-1,~ k~\text{odd}\\
        \ZZ & k = d,~ d~\text{odd}\\
        \ZZ \oplus \ZZ_2 & k = d-1,~ d~\text{even}\\
        0 &\text{otherwise}
    \end{cases}
\end{equation}

Using the universal coefficient theorem (c.f. \cite{HatcherAT}, 3A.3), we see different dimensions in homology when computing with fields of different characteristic due to the 2-torsion in the integral homology of $\calK^d$ 
\begin{equation}\label{eq:f2coeff}
H_k(\calK^d; \FF_2) = \begin{cases}
    \FF_2 & k = 0, d~\text{odd}\\
    \FF_2 \oplus \FF_2 & 0 < k < d\\
    \FF_2 & k = d\\
    0 &\text{otherwise}
\end{cases}
\end{equation}
and for $\FF = \FF_p$, $p> 2$, or $\FF = \QQ$, we have
\begin{equation}\label{eq:f3coeff}
H_k(\calK^d; \FF) = \begin{cases}
    \FF & k = 0\\
    \FF & k = d-1, d~\text{even}\\
    \FF & k = d, d~\text{odd}\\
    0 &\text{otherwise}
\end{cases}
\end{equation}

\begin{figure}
    \centering
    \includegraphics[width=0.49\linewidth]{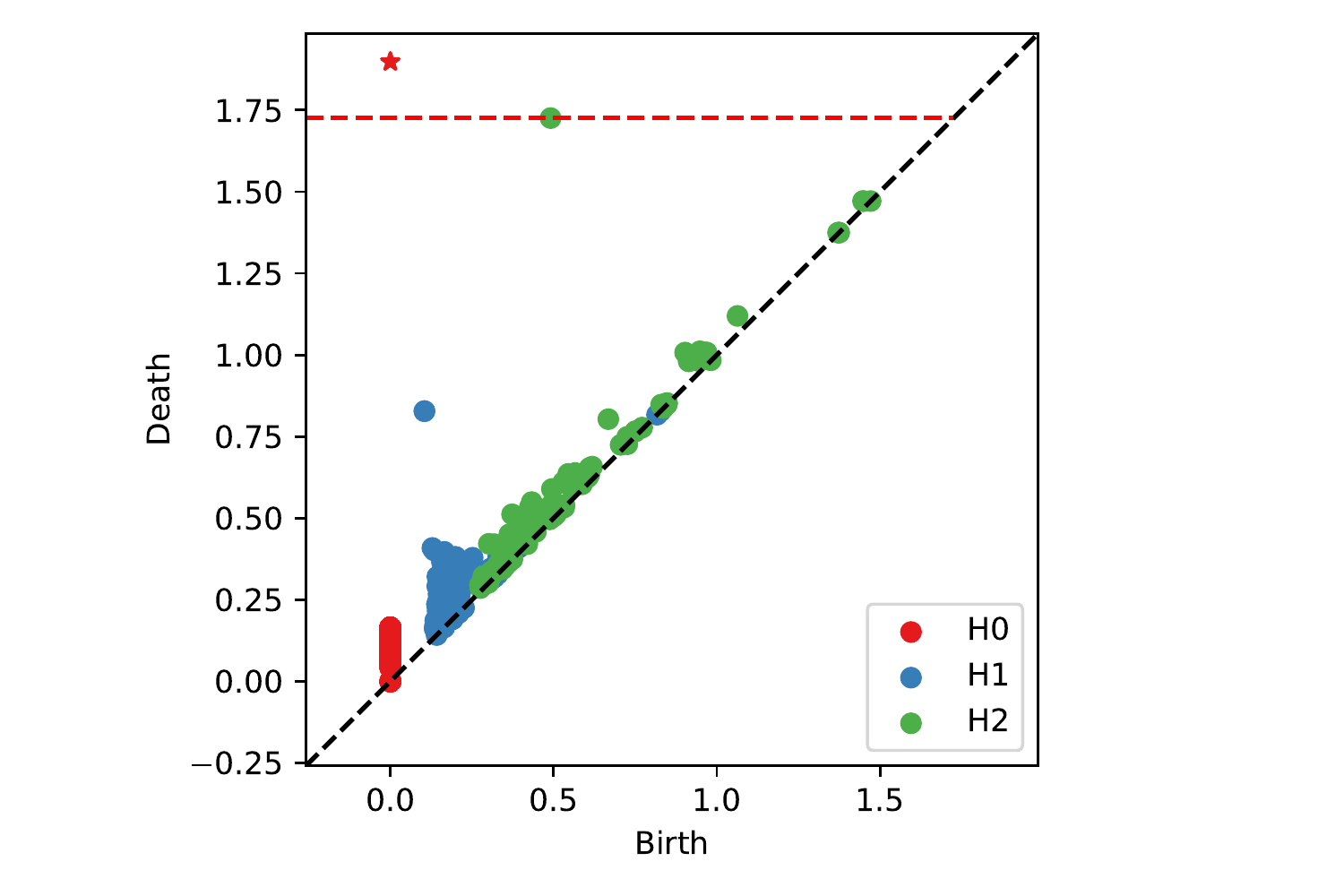}
    \includegraphics[width=0.49\linewidth]{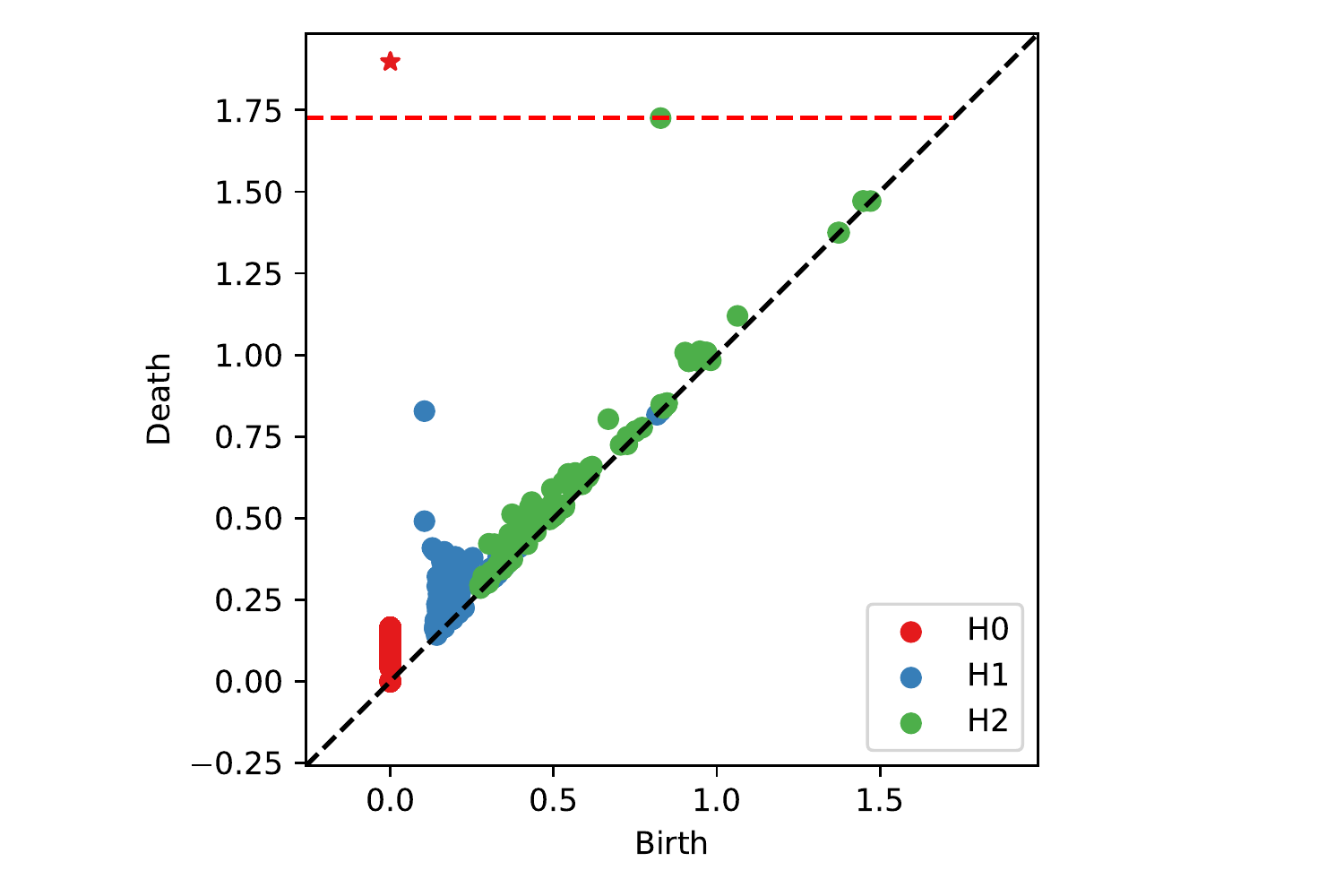}
    \caption{
    Persistent homology of a 2-dimensional Klein bottle, $\calK^2$. Left: with $\FF_2$ field coefficients.  Right: with $\FF_3$ field coefficients.  There are two robust $H_1$ generators with $\FF_2$ coefficients at the location $(0.1,0.8)$.
    }
    \label{fig:k2}
\end{figure}

\begin{figure}
    \centering
    \includegraphics[width=0.49\linewidth]{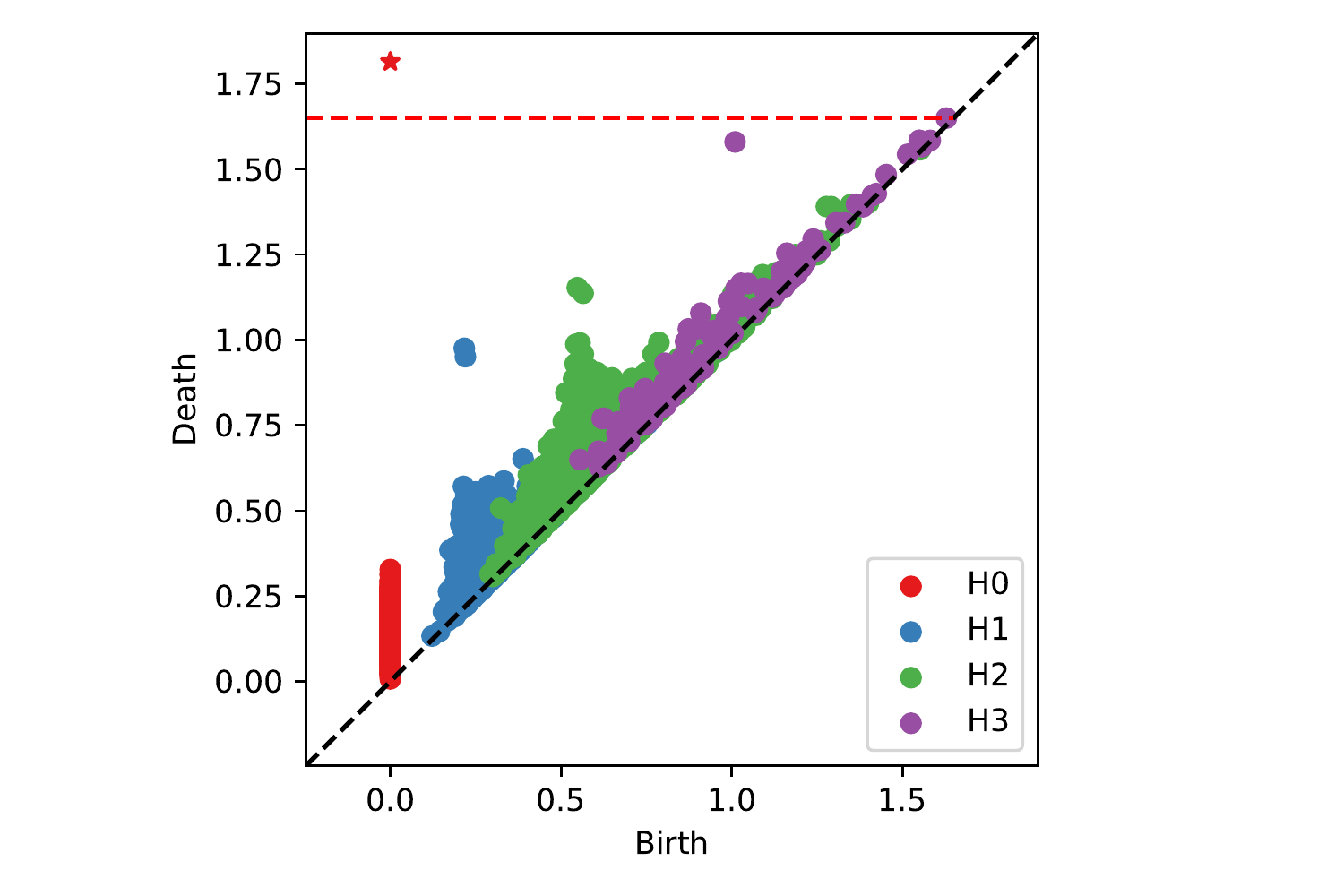}
    \includegraphics[width=0.49\linewidth]{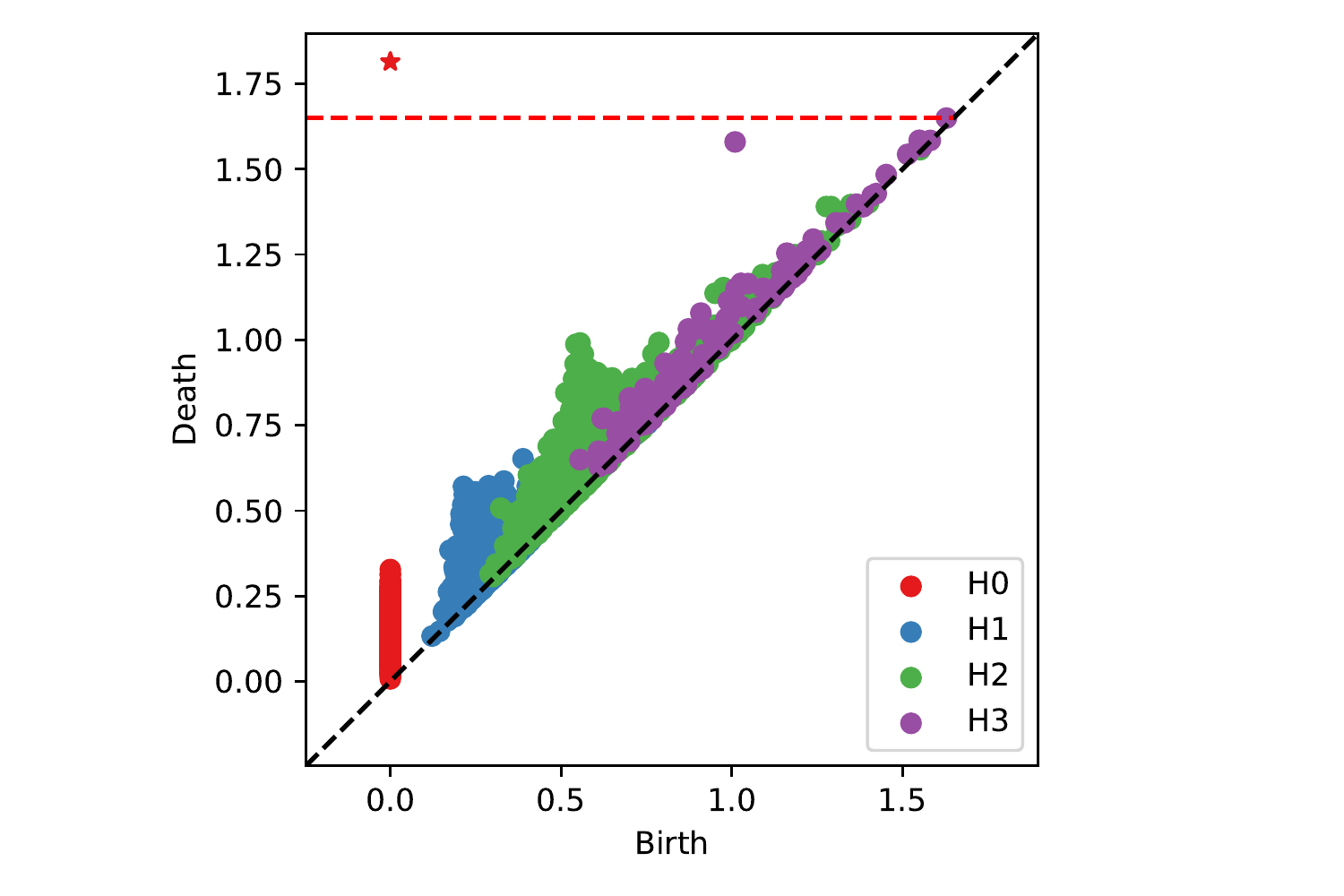}
    \caption{
    Persistent homology of a 3-dimensional Klein bottle, $\calK^3$. Left: with $\FF_2$ field coefficients.  Right: with $\FF_3$ field coefficients.
    }
    \label{fig:k3}
\end{figure}

We obtain a sample of $\K^d$ generalizing the model of Carlsson et al \cite{CImgPatch}.  Given a unit vector $\vphi \in \RR^d$ and an angle $\theta$, we define a patch
\begin{equation}
   p(x; \theta, \vphi) = \cos(\theta) (x^T \vphi)^2 + \sin(\theta) (x^T \vphi)
\end{equation}
which can be evaluated as a pixelated image patch by evaluating $x\in \RR^d$ on a grid.  In \cref{fig:k2} we generate a Klein bottle $\calK^2$ on $3\times 3$ image patches by evaluating $x$ on the grid $\{-1,0,1\}^2$, 20 equally spaced values of $\theta$, and $50$ equally spaced values of $\vphi$ for a total of 1000 points in $\RR^9$.  We compute persistent homology of the cover complex filtration $\calR(\bX, \calU; r)$ using the landmark-based cover in \cref{sec:sparse_filt_cover}, with $c=1.0$ for maximum sparsity. Using \cref{eq:f2coeff}, the homology of $\calK^2$ with coefficients in $\FF^2$ has has dimension vector $(1,2,1)$, which is clearly observed in the persistence diagram.  In \cref{eq:f3coeff}, coefficients in $\FF^3$, the dimension vector becomes $(1,1,0)$, and we see one of the prominent $H_1$ classes shrink, and the prominent $H_2$ class shift toward the diagonal in the corresponding persistence diagram.

In \cref{fig:k3}, we generate $\calK^3$ on $5\times 5\times 5$ patches using $20$ equally-spaced values of $\theta$ and $150$ values of $\vphi$ chosen by greedily landmarking a larger set of $4000$.  The total data set consists of 3000 points in $\RR^{125}$, and again we compute persistent homology of the cover complex filtration $\calR(\bX, \calU; r)$ using the landmark-based cover in \cref{sec:sparse_filt_cover}, with $c=1.0$.  Using \cref{eq:f2coeff}, the homology dimension vector of this space in $\FF^2$ is (1,2,2,1), and for $\FF^3$ coefficents, the dimension vector is $(1,0,0,1)$.  In \cref{fig:k3} we see both these dimension vectors match with the prominent homology classes in each dimension.



\section{Conclusion}\label{sec:conclusion}

In this paper, we developed a filtered version of the acyclic carrier theorem, which allowed us to construct interleavings between different geometric constructions.  We have presented several results for Vietoris-Rips cover complexes, and we anticipate that the use of filtered carriers has broad potential as a technique to construct interleavings in situations that we have not yet considered.  We have focused on algebraic interleavings, and many of these results could potentially be extended to homotopy interleavings \cite{blumbergUniversalityHomotopyInterleaving2017} given additional care when constructing carriers of cell complexes.  Another interesting line of future investigation would be to use the algorithmic construction of maps from carriers 
in data analysis.  This could potentially be used, for instance, in constructing low dimensional embeddings of data that minimize the interleaving distance between a filtration on the higher-dimensional point cloud and the embedded point cloud.

Another line of future work is to leverage cover complexes for distrubyted computation.  A limited version of this was explored in \cite{yoon2018}, and our interleaving results expand the potential use of cover complexes to more general settings.  We also believe that the interleaving bounds we derive are likely pessimistic in many situations where data has additional structure.  Analyses of these situations may help tighten our bounds considerably.

\section*{Acknowledgements}
BJN was supported by the Defense Advanced Research Projects Agency (DARPA) under Agreement No.
HR00112190040.  He thanks Gunnar Carlsson and Jonathan Taylor for discussions on a early version of this work.













\bibliographystyle{spmpsci}
\bibliography{main}

\begin{thebibliography}{10}
\providecommand{\url}[1]{{#1}}
\providecommand{\urlprefix}{URL }
\expandafter\ifx\csname urlstyle\endcsname\relax
  \providecommand{\doi}[1]{DOI~\discretionary{}{}{}#1}\else
  \providecommand{\doi}{DOI~\discretionary{}{}{}\begingroup
  \urlstyle{rm}\Url}\fi

\bibitem{blumbergUniversalityHomotopyInterleaving2017}
Blumberg, A.J., Lesnick, M.: Universality of the {{Homotopy Interleaving
  Distance}}.
\newblock arXiv:1705.01690 [cs, math]  (2017)

\bibitem{borsukNerve}
Borsuk, K.: On the imbedding of systems of compacta in simplicial complexes.
\newblock Fundamenta Mathematicae \textbf{35}, 217--234 (1948)

\bibitem{cang_topologynet:_2017}
Cang, Z., Wei, G.W.: {TopologyNet}: {Topology} based deep convolutional and
  multi-task neural networks for biomolecular property predictions.
\newblock PLOS Computational Biology \textbf{13}(7), e1005690 (2017).
\newblock \doi{10.1371/journal.pcbi.1005690}.
\newblock \urlprefix\url{http://dx.plos.org/10.1371/journal.pcbi.1005690}

\bibitem{carlsson_topological_2014}
Carlsson, G.: Topological pattern recognition for point cloud data.
\newblock Acta Numerica \textbf{23}, 289--368 (2014).
\newblock \doi{10.1017/S0962492914000051}.
\newblock
  \urlprefix\url{https://www.cambridge.org/core/product/identifier/S0962492914000051/type/journal_article}

\bibitem{factorizationView2019}
Carlsson, G., Dwaraknath, A., Nelson, B.J.: {Persistent and Zigzag Homology: A
  Matrix Factorization Viewpoint} (2019).
\newblock Preprint: \url{https://arxiv.org/abs/1911.10693}

\bibitem{CImgPatch}
Carlsson, G., Ishkhanov, T., de~Silva, V., Zomorodian, A.: On the local
  behavior of spaces of natural images.
\newblock International Journal of Computer Vision \textbf{76}(1), 1--12
  (2008).
\newblock \doi{10.1007/s11263-007-0056-x}.
\newblock \urlprefix\url{http://dx.doi.org/10.1007/s11263-007-0056-x}

\bibitem{ZZtheory2010}
Carlsson, G., de~Silva, V.: Zigzag persistence.
\newblock Foundations of Computational Mathematics \textbf{10}(4), 367--405
  (2010).
\newblock \doi{10.1007/s10208-010-9066-0}.
\newblock \urlprefix\url{http://link.springer.com/10.1007/s10208-010-9066-0}

\bibitem{cavanna_geometric_2015}
Cavanna, N.J., Jahanseir, M., Sheehy, D.R.: A {Geometric} {Perspective} on
  {Sparse} {Filtrations}.
\newblock In: Canadian Conference on Computational Geometry, p.~6 (2015)

\bibitem{cavannaGeneralizedPersistentNerve2018}
Cavanna, N.J., Sheehy, D.R.: The generalized persistent nerve theorem (2018).
\newblock Preprint: \url{http://arxiv.org/abs/1807.07920}

\bibitem{chazalProximityPersistenceModules2009}
Chazal, F., Cohen-Steiner, D., Glisse, M., Guibas, L.J., Oudot, S.Y.: Proximity
  of persistence modules and their diagrams.
\newblock In: Proceedings of the 25th annual symposium on Computational
  geometry - {SCG} '09, p. 237. {ACM} Press (2009).
\newblock \doi{10.1145/1542362.1542407}.
\newblock
  \urlprefix\url{http://portal.acm.org/citation.cfm?doid=1542362.1542407}

\bibitem{GHStable}
Chazal, F., Cohen-Steiner, D., Guibas, L.J., Mémoli, F., Oudot, S.Y.:
  Gromov-hausdorff stable signatures for shapes using persistence.
\newblock Computer Graphics Forum \textbf{28}(5), 1393--1403 (2009).
\newblock \doi{10.1111/j.1467-8659.2009.01516.x}.
\newblock \urlprefix\url{http://dx.doi.org/10.1111/j.1467-8659.2009.01516.x}

\bibitem{chazalPersistencebasedReconstructionEuclidean2008}
Chazal, F., Oudot, S.Y.: Towards persistence-based reconstruction in euclidean
  spaces.
\newblock In: Proceedings of the twenty-fourth annual symposium on
  Computational geometry - {SCG} '08, p. 232. {ACM} Press (2008).
\newblock \doi{10.1145/1377676.1377719}.
\newblock
  \urlprefix\url{http://portal.acm.org/citation.cfm?doid=1377676.1377719}

\bibitem{geometric_stab2014}
Chazal, F., de~Silva, V., Oudot, S.: Persistence stability for geometric
  complexes.
\newblock Geometriae Dedicata \textbf{173}, 193--214 (2014)

\bibitem{davis_n-dimensional_2019}
Davis, D.M.: An n-dimensional {Klein} bottle.
\newblock Proceedings of the Royal Society of Edinburgh Section A: Mathematics
  \textbf{149}(5), 1207--1221 (2019).
\newblock \doi{10.1017/prm.2018.73}.
\newblock Publisher: Royal Society of Edinburgh Scotland Foundation

\bibitem{deyMultiscaleMapperTopological2016}
Dey, T.K., Mémoli, F., Wang, Y.: Multiscale mapper: Topological summarization
  via codomain covers.
\newblock In: Proceedings of the twenty-seventh annual {ACM-SIAM} symposium on
  discrete algorithms, pp. 997--1013. {SIAM} (2016)

\bibitem{eilenbergSteenrod1952}
Eilenberg, S., Steenrod, N.E.: Foundations of Algebraic Topology.
\newblock Princeton Mathematical Series. Princeton University Press (1952)

\bibitem{gabrielI}
Gabriel, P.: Unzerlegbare darstellungen {I}.
\newblock Manuscripta Mathematica \textbf{6}, 71--103 (1972)

\bibitem{ApproximateNerveTheorem2017}
Govc, D., Skraba, P.: An approximate nerve theorem.
\newblock Foundations of Computational Mathematics \textbf{18}, 1245--1297
  (2017).
\newblock \doi{10.1007/s10208-017-9368-6}.
\newblock \urlprefix\url{http://link.springer.com/10.1007/s10208-017-9368-6}

\bibitem{Harris88}
Harris, C., Stephens, M.: A {Combined} {Corner} and {Edge} {Detector}.
\newblock In: Alvey Vision Conference, pp. 23.1--23.6 (1988).
\newblock \doi{10.5244/C.2.23}.
\newblock \urlprefix\url{http://www.bmva.org/bmvc/1988/avc-88-023.html}

\bibitem{HatcherAT}
Hatcher, A.: Algebraic Topology.
\newblock Cambridge University Press (2002)

\bibitem{hiraoka_hierarchical_2016}
Hiraoka, Y., Nakamura, T., Hirata, A., Escolar, E.G., Matsue, K., Nishiura, Y.:
  Hierarchical structures of amorphous solids characterized by persistent
  homology.
\newblock Proceedings of the National Academy of Sciences  (2016).
\newblock \doi{10.1073/pnas.1520877113}.
\newblock
  \urlprefix\url{https://www.pnas.org/content/early/2016/06/07/1520877113}.
\newblock Publisher: National Academy of Sciences Section: Physical Sciences

\bibitem{lesnick_multid2015}
Lesnick, M.: The theory of the interleaving distance on multidimensional
  persistence modules.
\newblock Foundations of Computational Mathematics \textbf{15}, 613--650 (2015)

\bibitem{mayParametrizedHomotopyTheory2006}
May, J.P., Sigurdsson, J.: Parametrized Homotopy Theory.
\newblock No. v. 132 in Mathematical Surveys and Monographs. {American
  Mathematical Society}, {Providence, R.I} (2006)

\bibitem{McClearySS}
McCleary, J.: A User's Guide to Spectral Sequences.
\newblock Cambridge Studies in Advanced Mathematics. Cambridge University Press
  (2001)

\bibitem{MosherTangora}
Mosher, R., Tangora, M.: Cohomology Operations and Applications in Homotopy
  Theory.
\newblock Harper's Series in Modern Mathematics. Harper \& Row (1968)

\bibitem{MunkresAT}
Munkres, J.R.: Elements of Algebraic Topology.
\newblock Addison-Wesley Publishing Company (1984)

\bibitem{nelson_parameterized_2020}
Nelson, B.: Parameterized topological data analysis.
\newblock Ph.D. thesis, Stanford University (2020).
\newblock \urlprefix\url{https://purl.stanford.edu/kn625zh9782}

\bibitem{Oudot}
Oudot, S.Y.: Persistence Theory: From Quiver Representations to Data Analysis,
  \emph{Mathematical Surveys and Monographs}, vol. 209.
\newblock American Mathematical Society (2015)

\bibitem{pereaTexture}
Perea, J., Carlsson, G.: A klein-bottle-based dictionary for texture
  representation.
\newblock International Journal of Computer Vision \textbf{107}(1), 75--97
  (2014)

\bibitem{roweis_nonlinear_2000}
Roweis, S.T., Saul, L.K.: Nonlinear {Dimensionality} {Reduction} by {Locally}
  {Linear} {Embedding}.
\newblock Science \textbf{290}(5500), 2323--2326 (2000).
\newblock \doi{10.1126/science.290.5500.2323}.
\newblock Publisher: American Association for the Advancement of Science

\bibitem{serreHomologieSinguliereEspaces1951}
Serre, J.P.: Homologie singuliere des espaces fibres.
\newblock The Annals of Mathematics \textbf{54}(3), 425 (1951).
\newblock \doi{10.2307/1969485}

\bibitem{sheehyLinearSizeApproximationsVietoris2013}
Sheehy, D.R.: Linear-size approximations to the vietoris–rips filtration.
\newblock Discrete \& Computational Geometry \textbf{49}(4), 778--796 (2013).
\newblock \doi{10.1007/s00454-013-9513-1}.
\newblock \urlprefix\url{http://link.springer.com/10.1007/s00454-013-9513-1}

\bibitem{mapper}
Singh, G., Memoli, F., Carlsson, G.: {Topological Methods for the Analysis of
  High Dimensional Data Sets and 3D Object Recognition}.
\newblock In: M.~Botsch, R.~Pajarola, B.~Chen, M.~Zwicker (eds.) Eurographics
  Symposium on Point-Based Graphics. The Eurographics Association (2007).
\newblock \doi{10.2312/SPBG/SPBG07/091-100}

\bibitem{tenenbaum_global_2000}
Tenenbaum, J.B., de~Silva, V., Langford, J.C.: A {Global} {Geometric}
  {Framework} for {Nonlinear} {Dimensionality} {Reduction}.
\newblock Science \textbf{290}(5500), 2319--2323 (2000).
\newblock \doi{10.1126/science.290.5500.2319}

\bibitem{yoon2018}
Yoon, H.R.: Cellular sheaves and cosheaves for distributed topological data
  analysis.
\newblock {Ph.D.} dissertation, University of Pennsylvania (2018)

\bibitem{ZCComputingPH2005}
Zomorodian, A., Carlsson, G.: Computing persistent homology.
\newblock Discrete \& Computational Geometry \textbf{33}(2), 249--274 (2005).
\newblock \doi{10.1007/s00454-004-1146-y}

\end{thebibliography}
%
%

\end{document}